\definecolor{darkgreen}{rgb}{0,0.4,0.1}
\numberwithin{equation}{section}
\theoremstyle{plain}
\newtheorem{thm}[equation]{Theorem}
\newtheorem{thmIntro}{Theorem}
\newtheorem{lemma}[equation]{Lemma}
\newtheorem{prop}[equation]{Proposition}
\theoremstyle{definition}
\newtheorem{defn}[equation]{Definition}
\newtheorem{rem}[equation]{Remark}
\newcommand\Z{\mathbb Z}
\newcommand\Q{\mathbb Q}
\newcommand\N{\mathbb N}
\newcommand\Np{{\mathbb N}_{\geq 1}}
\newcommand\Npp{{\mathbb N}_{\geq 2}}
\newcommand\C{\mathbb C}
\newcommand\Lb{\mathbb L}
\newcommand\Lf{\overline{\Lb}}
\newcommand{\im}{\mathsf{i}}
\newcommand{\ts}{{\mathsf t}}
\newcommand{\Rs}{{\mathsf{R}}}
\newcommand{\1}{\mathbbm{1}}
\newcommand{\Vs}{\mathsf{V}}
\newcommand{\Vmd}{\Vs^\alpha}
\newcommand{\Ps}{\mathsf{P}}
\newcommand{\Pmd}{\Ps^\alpha}
\newcommand{\qnum}[1]{\left\{ #1 \right\}}
\newcommand{\qint}[1]{\left[ #1 \right]}
\newcommand{\qbin}[2]{\begin{bmatrix}
#1\\
#2
\end{bmatrix}}
\newcommand{\sltwo}{\mathfrak{sl}(2)}
\newcommand{\AH}{\boldsymbol{A}}
\newcommand{\BH}{\boldsymbol{B}}
\newcommand{\CH}{\boldsymbol{C}}
\newcommand{\NH}{\boldsymbol{N}}
\newcommand{\Vb}{\boldsymbol{V}^\mathrm{JK}}
\newcommand{\RH}{\boldsymbol{R}}
\newcommand{\VH}{\boldsymbol{V}}
\newcommand{\Wb}{\boldsymbol{W}^\mathrm{JK}}
\newcommand{\WH}{\boldsymbol{W}}
\newcommand{\SH}{\boldsymbol{S}}
\newcommand{\HH}{\boldsymbol{H}}
\DeclareMathOperator{\Id}{Id}
\DeclareMathOperator{\Ima}{Im}
\DeclareMathOperator{\Res}{Res}
\DeclareMathOperator{\spn}{span}
\DeclareMathOperator{\End}{End}
\DeclareMathOperator{\soc}{soc}
\DeclareMathOperator{\rad}{rad}
\DeclareMathOperator{\head}{head}
\newcommand{\andq}{\quad \text{ and } \quad}
\newcommand{\forq}{\quad \text{ for } \quad}
\title[Braid group action on projective quantum $\mathfrak{sl}(2)$ modules]{Braid group action on\\[2mm] projective quantum $\mathfrak{sl}(2)$ modules}
\author{Konstantinos Karvounis}
\address{Institut f\"{u}r Mathematik,
Universit\"{a}t Z\"{u}rich,
Winterthurerstrasse 190, CH-8057 Z\"{u}rich, Switzerland.}
\email{konstantinos.karvounis@math.uzh.ch}
\begin{document}

\begin{abstract}
We define a family of the braid group representations via the action of the $R$--matrix  (of the quasitriangular extension) of the restricted quantum $\sltwo$ on a tensor power of a simple projective module. This family is an extension of the  Lawrence representation specialized at roots of unity. Although the center of the braid group has finite order on the
specialized Laurence representations, this action is faithful for our extension.
\end{abstract}

\maketitle

\section*{Introduction}
The braid group $B_n$ on $n$ strands was introduced in 1926 by Artin  
as a group with  generators 
$\sigma_1, ..., \sigma_{n-1}$
and defining relations:
\[
\sigma_i \sigma_j = \sigma_j \sigma_i \; \text{ for } \; \lvert i -j \rvert > 1
\andq
\sigma_i \sigma_{i+1} \sigma_i = \sigma_{i+1} \sigma_i \sigma_{i+1} \; \text{ for } \; 1\leq i \leq n-1.
\]
This group has many  topological incarnations: as a group
of braids in $\mathbb R^3$,  as  a mapping class group of an $n$--punctured disk or as a fundamental group of the configuration space of $n$ points on the plane ect.

In the last decades a long standing problem of the linearity of $B_n$ was resolved by Krammer and Bigelow \cite{Kr1,Bi1, Kr2}. Recall that a group is called linear if it is isomorphic to a subgroup of $GL(m,\mathbb K)$ for some 
 $m\in \mathbb N$ and a field $\mathbb K$. 
They constructed an injective homomorphism from $B_n$
to $GL(n(n-1)/2, \mathbb Z[q^{\pm 1}, t^{\pm 1}])$.
Since the ring of Laurent polynomials in two variables
$q$ and $t$ embeds into real numbers, this implies linearity with $\mathbb K=\mathbb R$.
This representation is  known as the Lawrence--Krammer--Bigelow (LKB) representation, since it fits into the $\ell$--indexed family of 
representations arising from
 the action of $B_n$ on the homology of the configuration space of $\ell$--tuples of $n$--punctured disks constructed by Lawrence \cite{Law} for $\ell=2$ and extending Burau for $\ell=1$.
It remains an interesting open problem  whether linearity can be achieved with rational coefficients.

\vspace{2mm}

Another important recent development is the work of Jackson and Kerler  which relates the Lawrence representations with the theory of  quantum groups. 
In \cite{JK} they construct an $\ell$--indexed family $\Wb_{n,\ell}$ of $B_n$--representations  via the action of the $R$--matrix for the quantum $\sltwo$ on the $n$th tensor power of the generic Verma module $\VH$. Since the $R$--matrix intertwines the action of the quantum group, it preserves the weight space
decomposition of $\VH^{\otimes n}$.
 The representation $\Wb_{n,\ell}$ is then obtained by  restricting  the usual $B_n$--action on $\VH^{\otimes n}$ to the subspace of weight $2 \ell$ less than the highest one.
Jackson and Kerler prove that for $\ell=1$ and $\ell=2$  their representations are isomorphic to the Burau and the LKB representations, respectively, and conjecture that the whole family is isomorphic to the Lawrence representations. Subsequently the conjecture was proved by Ito in \cite{Ito2} as a direct consequence of the Kohno theorem  \cite{Koh}.

In this paper we launch a study of specializations of the Lawrence representations where  both parameters $q$ and $t$ are roots of unity.
This was motivated by   non semisimple TQFTs of
Blanchet--Costantino--Geer--Patureau \cite{BCGP1}, in which the action of the Dehn twist along a separating curve has infinite order.

An appropriate algebraic setting is the representation theory of
the restricted quantum $\sltwo$ at a root of unity $q=e^{\pi \im / r}$ for $r \in \Npp$.
Although, this finite dimensional Hopf algebra 
is not quasitriangular for $r \geq 3$ \cite{KonSa}, there exists a quasitriangular extension $D$  \cite{FGST}. Furthermore, the category of $D$--modules is non semisimple and there is a unique simple projective  module $\Vs_{r-1}$, called Steinberg module whose dimension is $r$.

In this paper we study the action of $B_n$ on the module $\Vs_{r-1}^{\otimes n}$. As a first step we adapt the Jackson--Kerler method to the root of unity case and  get a family of representations $B_n \to \mathrm{GL}(\C)$, denoted by $\WH_{n,\ell}$. As in \cite{JK},
$\WH_{n,\ell}$ is defined as a subspace of weight $2\ell$ less
than highest. In detail, the space $\WH_{n,0}$ is spanned by the highest weight vector of $\Vs_{r-1}^{\otimes n}$ and  hence, is the trivial $B_n$--representation; $\WH_{n,1}$ is  spanned by the $(n-1)$ 
vectors of  weight $2$ less than highest, and so is isomorphic to the specialized Burau representation. For $\ell < r$ the representation $\WH_{n,\ell}$ is a specialization of the corresponding Lawrence representation.

Our main result is a construction of a non trivial
extension of $\WH_{n,\ell}$. For this, we analyze
the structure of $\WH_{n,\ell}$  with respect to the decomposition of $\Vs_{r-1}^{\otimes n}$ into a direct sum of projective modules.
It turns out that if a certain modular condition between the numbers $n, \ell$ and $r$ is satisfied, the representations $\WH_{n,\ell}$ contain  socle vectors of some projective modules. In this case, it is possible to extend the representations $\WH_{n,\ell}$ by including also the dominant head vectors. Thus we obtain the following:

\begin{thmIntro} \label{thm:thm1}
Given  $(n, \ell)\in (\Npp, \N)$, there is
 a pair of natural numbers $(r, \ell')$
 with  $0\leq\ell' < \ell < r$ and
 $\ell' +\ell\equiv 1 -n  \mod r$, such that
 an extension $\NH_{n,\ell,\ell'}$
of 
 $\WH_{n,\ell'}$ by $\WH_{n,\ell}$ is non trivial, i.e.
 the following short exact sequence of $B_n$--modules
\[
\begin{tikzcd}
0 \arrow[r] & \WH_{n,\ell} \arrow[r, hook] & \NH_{n,\ell,\ell'} \arrow[r] &\WH_{n,\ell'} \arrow[r] & 0.
\end{tikzcd}
\]
does not split.
Furthermore,  the center of the braid group acts faithfully on
$\NH_{n,\ell,\ell'}$.
\end{thmIntro}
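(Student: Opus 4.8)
\emph{Overall strategy.} The whole argument lives inside the projective $D$-module $\Vs_{r-1}^{\otimes n}$ and is powered by the non-semisimplicity of the double braiding on it. Two facts are used throughout. First, because the $R$-matrix of $D$ makes each braiding $c_{\Vs_{r-1},\Vs_{r-1}}$ a morphism in $\mathrm{Rep}(D)$ — it intertwines $\Delta$ with $\Delta^{\mathrm{op}}$ — every generator $\sigma_i$ acts on $\Vs_{r-1}^{\otimes n}$ by a $D$-module automorphism; consequently $E,F,K$ commute with the $B_n$-action, each weight space $\WH_{n,m}$ is a $B_n$-module, the iterated operators $F^{a},E^{a}$ ($a\ge 0$) restrict to $B_n$-homomorphisms between weight spaces, and the socle, the radical, and the block decomposition of $\Vs_{r-1}^{\otimes n}$ are $B_n$-stable. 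Second, the full twist $\beta_n=(\sigma_1\cdots\sigma_{n-1})^n$, which generates a finite-index subgroup of $Z(B_n)\cong\Z$, acts on $\Vs_{r-1}^{\otimes n}$ as a root-of-unity scalar times the action of $\mathfrak u^{-1}$ through the $n$-fold coproduct, where $\mathfrak u$ is the Drinfeld element of $D$; since the ribbon element acts on each non-simple indecomposable projective $\Ps_s$ as $\zeta_s(\mathrm{Id}+N_s)$ with $\zeta_s\in\C^\times$ and $N_s\neq 0$, $N_s^{2}=0$ (the nilpotent joining $\head\Ps_s$ to $\soc\Ps_s$), the operator $\beta_n$ is non-semisimple on $\Vs_{r-1}^{\otimes n}$, with Jordan blocks of size $\le 2$ and root-of-unity eigenvalues.

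\emph{Choice of parameters and construction.} We may assume $\ell\ge 1$. Choose $\ell'\in\{0,\dots,\ell-1\}$ and an integer $r>\ell$ with $r\mid n-1+\ell+\ell'$ — for instance $\ell'=0$, $r=n-1+\ell$ (so $r\ge 2$, using $n\ge 2$ and $\ell\ge 1$), or a larger $r$ if convenient; then $0\le\ell'<\ell<r$ and $\ell+\ell'\equiv 1-n\pmod r$. Put $\lambda:=n(r-1)-2\ell$ and $\lambda':=n(r-1)-2\ell'$. The modular congruence is equivalent to $\lambda+\lambda'\equiv-2\pmod{2r}$, i.e.\ $\lambda$ and $\lambda'$ are interchanged by the affine reflection of the linkage principle; hence they lie in one block and occur as weights of the head/socle constituent of a single indecomposable projective $\Ps_s$ appearing in $\Vs_{r-1}^{\otimes n}$. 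Tracking this decomposition one shows that $\WH_{n,\ell}$ contains a weight-$\lambda$ vector lying in the socle of such a $\Ps_s$ which is joined, along a nontrivial length-$2$ Jordan string of $\beta_n$ supported in that block, to a dominant weight-$\lambda'$ vector; adjoining these dominant weight-$\lambda'$ vectors to $\WH_{n,\ell}$ and passing to the $B_n$-module they generate yields $\NH_{n,\ell,\ell'}$ together with the exact sequence of the statement. Making this precise — identifying (or merely establishing the non-vanishing of) the multiplicities of the $\Ps_s$ in $\Vs_{r-1}^{\otimes n}$, locating $\lambda$ and $\lambda'$ in the radical filtration of $\Ps_s$, and checking that the resulting extension really records the Jordan string and is not a coboundary — is the main obstacle; the rest is formal.

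\emph{Non-splitting.} Since $\lambda$ and $\lambda'$ are linked, the scalar part of $\beta_n$ on their common block is a single root of unity $\theta$; the length-$2$ Jordan string then survives in $\NH_{n,\ell,\ell'}$, so $\beta_n$ acts there as $\theta$ plus a nonzero nilpotent, hence non-semisimply. On the other hand, for $\ell,\ell'<r$ the modules $\WH_{n,\ell}$ and $\WH_{n,\ell'}$ are specialised Lawrence representations on which $\beta_n$ has finite order, hence acts semisimply. If the sequence split, $\beta_n$ would act on $\NH_{n,\ell,\ell'}\cong\WH_{n,\ell}\oplus\WH_{n,\ell'}$ as the block-diagonal sum of two semisimple operators and so be semisimple — a contradiction. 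Therefore the extension does not split.

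\emph{Faithfulness of the centre.} If $\beta_n^{k}=\mathrm{Id}$ on $\NH_{n,\ell,\ell'}$ for some $k\ge 1$, then $\beta_n|_{\NH_{n,\ell,\ell'}}$ annihilates the separable polynomial $X^{k}-1$ and is therefore diagonalisable, contradicting the non-semisimplicity established above. Hence $\beta_n$ acts with infinite order, $\langle\beta_n\rangle\hookrightarrow\mathrm{GL}(\NH_{n,\ell,\ell'})$, and since $\langle\beta_n\rangle$ has finite index in $Z(B_n)\cong\Z$, the whole centre of $B_n$ acts faithfully on $\NH_{n,\ell,\ell'}$.
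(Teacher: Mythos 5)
Your overall architecture is the right one and coincides with the paper's: the extension is detected by the non-semisimple action of the full twist, which on each non-simple indecomposable projective $\Pmd_i$ acts as a root of unity times $(\Id+\text{nilpotent})$ (the paper's Equation~\ref{eq:twist_proj}), so that a surviving length-two Jordan string simultaneously forces non-splitting and infinite order of the centre. Your parameter choice $\ell'=0$, $r=n+\ell-1$ is valid, and your non-splitting and faithfulness paragraphs are correct \emph{conditional on the construction}; they are a qualitative form of the paper's Theorem~\ref{thm:faithful_center} and its use in the proof of Theorem~\ref{thm:thm1}.

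There is, however, a genuine gap exactly where you flag ``the main obstacle'': you never prove that the Jordan string survives in $\NH_{n,\ell,\ell'}$, i.e.\ that $\WH_{n,\ell}$ actually contains socle vectors of non-simple projectives ($\SH_{n,\ell}\neq\{0\}$), that the corresponding dominant head vectors span a complement $\HH_{n,\ell}$ of dimension $\dim\WH_{n,\ell'}$, and that the quotient $\NH_{n,\ell,\ell'}/\WH_{n,\ell}$ is isomorphic to $\WH_{n,\ell'}$ as a $B_n$-module. The linkage computation $\lambda+\lambda'\equiv-2\pmod{2r}$ only shows these constituents are \emph{allowed} by the weights; it does not show they \emph{occur} with nonzero multiplicity, nor does it produce the $B_n$-action on the quotient. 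That is the actual substance of the paper's argument: Proposition~\ref{prop:F_inj} (injectivity of $F$ on $\VH_{n,\ell}$ for $\ell<r-1$), Proposition~\ref{prop:struct3} (identifying $\SH_{n,\ell}=\Ima F^{j+1}\vert_{\WH_{n,\ell'}}\cong\WH_{n,\ell'}$, using $\WH_{n,\ell'}=\RH_{n,\ell'}$ from Lemma~\ref{lem:struct1}), Theorem~\ref{thm:Nnl} (the head-to-socle bijection producing $\HH_{n,\ell}$), and Proposition~\ref{prop:exseq1} (the quotient computation via $FE$-equivariance). Without these steps the exact sequence in the statement has not been constructed, and your non-splitting argument has nothing to apply to. A secondary, smaller point: you assert without justification that $\beta_n$ has finite order (hence acts semisimply) on the specialised Lawrence representations $\WH_{n,\ell}$; in your own framework this follows because the nilpotent part of the ribbon element sends head to socle and therefore vanishes on $\ker E$, but it should be stated.
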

 
The faithfulness for the center follows from the fact  that the full twist has infinite order on  reducible projective modules. On  $\WH_{n,\ell}$
this action is not faithful. 

An interesting open problem is to extend our construction  to a faithful 
braid group representation over the cyclotomic field.

The Steinberg module of the restricted quantum $\sltwo$ is also a representation of the unrolled quantum $\sltwo$ with integer weights. Therefore, the representations $\NH_{n,\ell,\ell'}$ can also be obtained  by applying the same construction to the Steinberg module of the unrolled quantum $\sltwo$ \cite{KarPhd}, which has been used to construct non semisimple link and 3-manifold invariants in \cite{CGP1, BCGP1}. 
\vspace{2mm}

The special case $\ell=2$ we study in details. Here we obtain two representations:  $\NH_{n,2,0}$ 
and $\NH_{n,2,1}$, 
extending the trivial and specialized Burau representations, respectively, by the specialized LKB $\WH_{n,2}$. 
In both cases, we provide explicit formulas for the braid group action.

Both of them can be generalized to generic three parameter
representations,
$\widetilde{\NH}_{n,2,0}$ and $\widetilde{\NH}_{n,2,1}$ respectively.
However, we prove that $\widetilde{\NH}_{n,2,0}$ splits as a direct sum of the LKB and the trivial representations.
Further we study a certain specialization of $\widetilde{\NH}_{n,2,0}$ at roots of unity, which is isomorphic to $\NH_{n,2,0}$ for $n$ such that the modular condition of Theorem~\ref{thm:thm1} holds. We then prove that this specialization of $\widetilde{\NH}_{n,2,0}$ is a non trivial extension of the trivial by the specialized LKB representation if and only if the modular condition for $n$ is satisfied.

Furthermore, we consider a specialization of $\NH_{n,2,0}$, where we put both parameters equal to $1$. We denote this representation by $\overline{\NH}_{n,2,0}$. The specialized LKB subrepresentation of
$\overline{\NH}_{n,2,0}$  factors through the permutation group $S_n$. Nevertheless, $\NH_{n,2,0}$ is faithful on the subgroup of $B_n$ generated by the half--twist $\Delta_n$, which includes the center of $B_n$.

Finally, we prove that a quotient of the representation $\WH_{n,2}$ is isomorphic to a specialization of the cubic Hecke algebra \cite{MarThese}.

\vspace{2mm}

The paper is organized as follows. In Section~\ref{sec:quantum} we define $D$ and describe the category of $D$-modules. In Section~\ref{sec:WH} we define the highest weight spaces $\WH_{n, \ell}$ and adapt the Jackson-Kerler construction for the root of unity case. Further, we study the representations $\WH_{n,\ell}$ by giving emphasis on their structure with respect to the direct sum decomposition of $\Vs_{r-1}$. Then, in Section~\ref{sec:NH} we define the representations $\NH_{n,\ell,\ell'}$ extending $\WH_{n,\ell'}$ by $\WH_{n,\ell}$ and prove their faithfulness on the center of $B_n$. The representations $\NH_{n,2,0}$ and $\NH_{n,2,1}$ are studied in detail in Section~\ref{sec:examples}, where we give the explicit $B_n$--action. We conclude Section~\ref{sec:examples} by proving that the representations $\NH_{n,2,0}$ and $\NH_{n,2,1}$ are not faithful when $n \geq 3$ and $r \geq 5$. Then in Section~\ref{sec:3var} the representations $\NH_{n,2,0}$ and $\NH_{n,2,1}$ are generalized to $3$--variable representations. We prove that $\widetilde{\NH}_{n,2,0}$ splits and study the restriction of $\widetilde{\NH}_{n,2,0}$ on $B_{n-1}$. Finally, in Section~\ref{sec:cubic} we study how the representation $\WH_{n,2}$ connects to the cubic Hecke algebra.

\subsection*{Acknowledgments} The author would like to thank Prof.~A.~Beliakova and Prof.~C.~Blanchet for the insightful discussions and for providing comments on the current manuscript.

\section{The quasitriagular extension of the restricted quantum \texorpdfstring{$\sltwo$}{sl2}} \label{sec:quantum}

\subsection{Notation}
Let $r \in \Npp$ and $q = e^{\pi \im /r}$ be a $2r$-th primitive root of unity and let $q^{1/2} = e^{\pi \im/2r}$. We set:
\[
\qnum{x} = q^x - q^{-x} \quad \text{and} \quad \qint{x} = \frac{\qnum{x}}{\qnum{1}}.
\]
For $m, n \in \N$ such that $m < r$ we also set
\[
\qnum{n}! = \qnum{n} \qnum{n-1} \cdots \qnum{1}, \quad \qint{n}! = \qint{n} \qint{n-1} \cdots \qint{1}
\]
and for $m,n < r$:
\[
\qbin{n}{m}= \frac{\qint{n}!}{\qint{n-m}!\, \qint{m}!}.
\]

\subsection{Definition of \texorpdfstring{$D$}{D}}
The Hopf algebra $D$ is defined as the $\C$-algebra generated by $E, F, k, k^{-1}$ satisfying the relations
\begin{equation} \label{eq:dq_rels}
\begin{aligned}
E^r = F^r=0, & \quad & k^{4r}=1, & \quad & kk^{-1} = 1
\\
kE = q \, Ek, & \quad & k F = q^{-1} F k, & \quad & \left[E, F \right] = \frac{k^2 - k^{-2}}{q-q^{-1}}.
\end{aligned}
\end{equation}
For the Hopf structure of $D$ we have the following:
\begin{align*} 
&\Delta(E) = 1 \otimes E + E \otimes k^2, & & \epsilon(E)=0, & & S(E)=-E k^{-2},\\
&\Delta(F) = k^{-2} \otimes F + F \otimes 1, & & \epsilon(F)=0, & & S(F)=-k^2 F,\\
&\Delta(k) = k \otimes k, & & \epsilon(k)=1, & & S(k)=k^{-1},\\
&\Delta(k^{-1}) = k^{-1} \otimes k^{-1}, & & \epsilon(k^{-1})=1, & & S(k^{-1})=k.\\
\end{align*}
The restricted quantum $\sltwo$, denoted by $U$, can be defined as the Hopf subalgebra of $D$ generated by $E, F$ and $K := k^2$.
As shown in \cite{FGST}, the Hopf algebra $D$ has a universal $R$-matrix defined by
\begin{equation}\label{eq:Rmatrix}
R = \frac{1}{4r} \sum_{n=0}^{r-1} \sum_{m,m'=0}^{4r-1} \frac{\qnum{1}^{2n}}{\qnum{n}!} q^{n(n-1)/2 + n(m-m') - m m' / 2} E^n k^m \otimes F^n k^{m'},
\end{equation}
as well as a ribbon element defined by
\begin{equation} \label{eq:twist}
\theta = \frac{1 - \im}{2 \sqrt{r}} \sum_{n=0}^{r-1} \sum_{m=0}^{2r-1} \frac{\qnum{1}^{2n}}{\qnum{n}!} q^{-n/2 + nm + (m+r+1)^2/2} F^n E^n k^{2m},
\end{equation}
where $\im = \sqrt{-1}$. Therefore, $(D, R, \theta)$ is a ribbon Hopf algebra \cite{FGST}. Note also that $\theta \in U$.

\subsection{The category of \texorpdfstring{$D$}{D}-modules}
The simple and the indecomposable projective $D$-modules have been classified in \cite{X2,X3}. Note that, any $D$-module $V$ is a \textit{weight module}, that is, $k$ acts diagonally and there exists a direct sum decomposotition into $k$-eigenspaces $V = \oplus_{\lambda \in \C} V_{\lambda}$, where $k v = \lambda v$, for all $v \in V_\lambda$ \cite{X2}. The vector $v$ is called a \textit{weight vector} and the scalar $\lambda$ is called the \textit{weight} of $v$.

Let $\alpha \in \{ \pm 1, \pm \im \}$. Recall that $q^{1/2} = e^{\pi \im/2r}$. In $D$-mod there exist $r$ simple modules $\Vmd_i$ of highest weight $\alpha q^{i/2}$, where $0 \leq i \leq r-1$ and with $\dim \Vmd_i = i + 1$. They are generated by the basis (weight) vectors $u_0^\alpha, \ldots, u_i^\alpha$ satisfying the relations
\begin{equation}\label{eq:action_Va}
k \, u_m^\alpha = \alpha q^{\frac{i-2m}{2}} \, u_m^\alpha, \quad
E \, u_m^\alpha = u_{m-1}^\alpha \andq
F \, u_m^\alpha = \alpha^2 \qint{m + 1} \qint{i-m} \, u_{m+1}^\alpha,
\end{equation}
where $u_{-1} = u_{i+1} = 0$. The modules $\Vs_{r-1}^\alpha$ are projective \cite[Lemma 2.1.11]{X3}. In this manuscript, we are concerned mainly with the module $\Vs_{r-1}^{1}$. To simplify the notation we set $\Vs_i := \Vs_i^1$ and similarly $u_i := u_i^1$. Additionally, we set
\begin{equation}\label{eq:s}
s := q^{r - 1}.
\end{equation}
Then, the relations satisfied by the basis vectors of $\Vs_{r-1}$ are written as
\[
k \, u_m = s^{1/2} q^{-m} \, u_m, \quad
E \, u_m = u_{m-1} \andq
F \, u_m = \qint{m + 1} \qint{r - 1 -m} \, u_{m+1},
\]
where $0 \leq m \leq r-1$ and $u_{-1} = u_r = 0$.
Note also that $\Vmd_0 \otimes \Vs_i \cong \Vmd_i$.

\begin{rem} \label{rem:V_basis_F}
In \cite{X2} another basis $\{v_0^\alpha, \ldots, v_{r-1}^\alpha \}$ of $\Vmd_i$ is used. The change of basis is given by $v_m^\alpha = (-1)^i \qint{m}! \qint{i+1-m}! \, u_i^\alpha$. The $D$-action on $\Vmd_i$ is given by
\[
k \, v_m^\alpha = \alpha q^{\frac{i-2m}{2}} \, v_m^\alpha, \quad
E \, v_m^\alpha = \alpha^2 \qint{m} \qint{i+1-m} \, v_{m-1}^\alpha \andq
F \, v_m^\alpha = v_{m+1}^\alpha.
\]
\end{rem}

\bigbreak 
In $D$-mod, there exist $2r$-dimensional non-simple indecomposable projective modules $\Pmd_i$, where $0 \leq i \leq r-2$ and as before $\alpha \in \{ \pm 1, \pm \im \}$, that are the projective covers of the modules $\Vmd_i$. Figure~\ref{fig:proj_D} illustrates the structure of the modules $\Pmd_i$, where we set $j := r - 2 -i$.

\begin{figure}[ht] 
\begin{alignat*}{3}
&\raisebox{-\height}{$\swarrow$} & \; \Vmd_i \; & \raisebox{-\height}{$\searrow$}& &\; \text{(head)}
\\
\Vs_j^{-\im \alpha} & & & & \Vs_j^{\im \alpha}
\\
&\raisebox{\height}{$\searrow$}  & \; \Vmd_i \; & \raisebox{\height}{$\swarrow$}& &\; \text{(socle)}
\end{alignat*}
\caption{The projective module $\Pmd_i$}\label{fig:proj_D}
\end{figure}

For any module $M$, the \textit{socle} $\soc(M)$ of $M$ is defined as the sum of all irreducible submodules of $M$. Further, the \textit{radical} $\rad(M)$ of $M$ is defined as the intersection of all maximal submodules of $M$. We also define the \textit{head} of $M$ by $\head(M)=M/\rad(M)$. For the non-simple indecomposable projective $\Pmd_i$, it is easy to see that
\[
\soc(\Pmd_i)=\Vmd_i , \qquad \rad(\Pmd_i) = \Vs_j^{-\im \alpha}\oplus \Vmd_i \oplus\Vs_j^{\im \alpha} \andq \head(\Ps_i)=\Vmd_i.
\]
It is shown in \cite{X3} that $\Pmd_i \cong \Vmd_0 \otimes \Ps_i$, where $\Ps_i := \Ps_i^1$.

We call a weight vector $v$ \textit{dominant} if $v \in \ker(FE)^2$, following the terminology of \cite{CGP3}. Recall that $j = r - 2 -i$. The modules $\Ps_i$ have a basis consisting of $2r$ weight vectors
\[
\left\{ w_m^H, w_m^S \right\}_{0 \leq m \leq i} \cup \left\{ w_m^L, w_m^R \right\}_{0 \leq m \leq j},
\]
where $w_0^H$ is the dominant vector generating the module. The basis vectors satisfy the relations
\begin{equation}\label{eq:proj_rels1}
\begin{alignedat}{4}
w_m^H &= F^m \, w_0^H 					&&& 		& &\forq m=1,\ldots,i,
\\
w_j^R 	&= E \, w_0^H		&\andq &&w_{j-m}^R &= E^m \, w_j^R &\forq m=1,\ldots,j,
\\
w_0^S 		&= F \, w_j^R 		&\andq &&w_m^S &= F^m \, w_0^S 		&\forq m=1,\ldots,i,
\\
w_0^L 	&= F^{i+1} \, w_0^{H} &\andq &&w_m^L &= F^m \, w_0^L &\forq m=1,\ldots,j.
\end{alignedat}
\end{equation}
and
\begin{equation}\label{eq:proj_rels2}
\begin{alignedat}{3}
k \, w_m^L 	&= q^{-(i+2+2m)/2} \, w_m^L,  & k \, w_m^R 	&= q^{(j+r-2m)/2} \, w_m^R,
\\
E \, w_m^R 			&= w_{m-1}^R									& k \, w_m^X 	&= q^{(i-2m)/2} \, w_m^X,  \; \text{ for } X \in \{H, S\}
\\
E \, w_m^H 	&= \gamma_{i,m} \, w_{m-1}^H + w_{m-1}^S, 	& E \, w_0^L &= w_i^S,
\\
E \, w_m^S 	&= \gamma_{i,m} \, w_{m-1}^S,					& E \, w_m^L 	&= -\gamma_{j,m} \, w_{m-1}^L, 	
\\
F \, w_i^H 	&= w_0^L,				&E \, w_0^R 		= E \, w_0^S &= F \, w_i^S = F \, w_j^L = 0,		
\\
F \, w_m^R 	&= -\gamma_{j,m} \, w_{m-1}^R,		&F \, w_m^X &= w_{m-2} ^X, \;  X \in \{L, H, S\},
\end{alignedat}
\end{equation}
where $\gamma_{m,m'} = \qint{m'} \qint{m-m'+1} = \gamma_{m,m-m'+1}$ and the indices of the vectors are such that the vectors are defined. Note that the isomorphism $\soc(\Ps_i) \cong \Vs_i$ is given by $w_m^S \mapsto v_m^1$, where $\{v _0^1, \ldots, v_i^1 \}$ is the basis $\Vs_i$ of Remark~\ref{rem:V_basis_F}.

Let $x_{\alpha,i} \colon \Pmd_i \to \Pmd_i$ be the nilpotent map of order $2$ defined by $u_0^{\alpha} \otimes w_0^H \mapsto u_0^{\alpha} \otimes w_0^S$, which generates $\End_U(\Pmd_i)$. Hence, $\End_U(\Pmd_i)$ is isomorphic to the algebra of dual numbers $\C[x_{\alpha,i}]/(x_{\alpha,i}^2)$.
Moreover
\[
F E \, (u_0^{\alpha} \otimes w_0^H) = \alpha^2 (u_0^{\alpha} \otimes w_0^S)
\]
which, together with $\alpha^4=1$, implies
\begin{equation}\label{eq:xmi}
x_{\alpha,i} (u_0^{\alpha} \otimes w_0^H) = \alpha^2 \, FE (u_0^{\alpha} \otimes w_0^H).
\end{equation}
Further, note that there exists a bijection
\begin{equation} \label{eq:proj_bij}
\begin{aligned}
\widetilde{x}_{\alpha,i} \colon \head(\Pmd_i) &\to \soc(\Pmd_i)
\\
\left( u_0^\alpha \otimes w_0^H \right)_m &\mapsto \left( u_0^\alpha \otimes w_0^S \right)_m \forq 0 \leq m \leq i.
\end{aligned}
\end{equation}

\bigbreak
Since the module $\Vs_{r-1}$ is projective, any tensor power of $\Vs_{r-1}$ decomposes to a direct sum of indecomposable projective modules. Therefore, due to the above classification of indecomposable projective modules in $D$-mod \cite{X3}, we can write any tensor power of $\Vs_{r-1}$ as a direct sum of the modules $\Vmd_{r-1}$ and the modules $\Pmd_i$.

\subsection{Strong weights}
Let $V$ be a weight module of $D$. Let $-2r \leq \lambda, \lambda' < 2r$ and let $v$ and $w$ be weight vectors of $V$, such that $k v = q^{\lambda/2} v$ and $k w = q^{\lambda'/2} w$. We define the operator $H \colon V \to V$ by:
\begin{equation} \label{eq:H}
H v = \lambda \, v.
\end{equation}
We call $\lambda$ the \textit{strong weight} of $v$. Further, we define for every $n \in \Npp$ an operator $H_n \colon V^{\otimes n} \to V^{\otimes n}$ by
\begin{equation} \label{eq:Hn}
H_n (v_1 \otimes \ldots \otimes v_n) := \sum_{i=1}^n v_1 \otimes \ldots \otimes v_{i-1} \otimes H v_i \otimes v_{i+1} \otimes \ldots v_n,
\end{equation}
where $v_1, \ldots, v_n \in V$. Let $\lambda_i$ be the strong weight of $v_i$, for $1 \leq i \leq n$. Then by definition, it holds
\[
H_n (v_1 \otimes \ldots \otimes v_n) = \left( \lambda_1 + \ldots + \lambda_n \right) v_1 \otimes \ldots \otimes v_n.
\]
We call similarly the scalar $\lambda_1 + \ldots + \lambda_n \in \C$ the \textit{strong weight} of $v_1 \otimes \ldots \otimes v_n$.

Finally, let $q^{H \otimes H/2} \colon V \otimes V \to V \otimes V$ be the operator defined by
\begin{equation} \label{eq:qHH2}
q^{H \otimes H/2} (v \otimes w) = q^{\lambda \lambda'/2} v \otimes w.
\end{equation}

\subsection{Ribbon structure of \texorpdfstring{$D$}{D}}
In contrast to $U$-mod, the category $D$-mod is ribbon, since $D$ is a ribbon Hopf algebra. The ribbon structure is given by
\begin{itemize}
\item the \textit{braiding operator} $c_{V,W} \colon V \otimes W \to W \otimes V$ defined by
\[
v \otimes w \mapsto \tau \circ R (v \otimes w),
\]
where $\tau(x \otimes y) = y \otimes x$ is the flip map;
\item the \textit{twist operator} $\theta_V \colon V \to V$ defined by $v \mapsto \theta^{-1} v$.
\end{itemize}
The duality maps for the pivotal structure of $D$ can be found in \cite{BBG}.

The action of the R-matrix \eqref{eq:Rmatrix} can be given by the formula of the R-matrix of quantum $\sltwo$ (see \cite{JK} and \cite{CGP3}) as follows.

\begin{lemma} The action of the R-matrix of $D$ on any weight $D$-module $V$ is given by
\begin{equation} \label{eq:Rmatrix_unrolled}
R = q^{H \otimes H/2} \sum_{n=0}^{r-1} \frac{\qnum{1}^{2n}}{\qnum{n}!} q^{n(n-1)/2}  \left( E^n  \otimes F^n \right).
\end{equation}
\end{lemma}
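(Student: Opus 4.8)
The plan is a direct computation: evaluate the explicit universal $R$-matrix \eqref{eq:Rmatrix} on a pair of weight vectors and show the result agrees with \eqref{eq:Rmatrix_unrolled}. So fix weight vectors $v,w\in V$ with $kv=q^{\lambda/2}v$ and $kw=q^{\lambda'/2}w$. The first thing I would record is that $\lambda,\lambda'$ are necessarily \emph{integers}: since $k^{4r}=1$, the eigenvalue $q^{\lambda/2}=e^{\pi\im\lambda/(2r)}$ is a $4r$-th root of unity, and $q^{1/2}$ is a primitive one. Applying \eqref{eq:Rmatrix} to $v\otimes w$ and using $k^m v=q^{m\lambda/2}v$, $k^{m'}w=q^{m'\lambda'/2}w$, the action takes the form $\sum_{n=0}^{r-1}\frac{\qnum{1}^{2n}}{\qnum{n}!}q^{n(n-1)/2}\,c_n\,(E^n v\otimes F^n w)$, where $c_n=\frac{1}{4r}\sum_{m,m'=0}^{4r-1}q^{\,n(m-m')-mm'/2+m\lambda/2+m'\lambda'/2}$. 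Everything then reduces to the identity $c_n=q^{(\lambda+2n)(\lambda'-2n)/2}$.

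The key step is evaluating $c_n$, a routine geometric-sum manipulation once the exponent is regrouped correctly. Writing $\zeta=q^{1/2}$ and rewriting the exponent as $\tfrac12\bigl(m(2n+\lambda)+m'(\lambda'-2n-m)\bigr)$ — here the cross term $-mm'/2$ of the $R$-matrix is exactly what couples the two summations — one has $c_n=\frac{1}{4r}\sum_{m=0}^{4r-1}\zeta^{m(2n+\lambda)}\sum_{m'=0}^{4r-1}\zeta^{m'(\lambda'-2n-m)}$. Because $\lambda'-2n-m\in\Z$ and $\zeta$ is a primitive $4r$-th root of unity, the inner sum is $4r$ when $m\equiv\lambda'-2n\pmod{4r}$ and $0$ otherwise; exactly one $m_0\in\{0,\dots,4r-1\}$ meets the congruence, so $c_n=\zeta^{m_0(2n+\lambda)}$, which equals $\zeta^{(\lambda'-2n)(\lambda+2n)}=q^{(\lambda+2n)(\lambda'-2n)/2}$ since $2n+\lambda\in\Z$ and $\zeta^{4r}=1$. (Integrality of $\lambda,\lambda'$ is what makes $\zeta^{4r}=1$ usable at each step, so I would flag it at the outset.)

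It remains to recognise $c_n$ as the scalar produced by the right-hand side of \eqref{eq:Rmatrix_unrolled}. From $kE=qEk$ and $kF=q^{-1}Fk$ one gets $k(E^n v)=q^{(\lambda+2n)/2}E^n v$ and $k(F^n w)=q^{(\lambda'-2n)/2}F^n w$, so $E^n v$ (when nonzero) has strong weight congruent to $\lambda+2n$, and $F^n w$ strong weight congruent to $\lambda'-2n$, modulo $4r$; by \eqref{eq:qHH2} the operator $q^{H\otimes H/2}$ multiplies $E^n v\otimes F^n w$ by $q^{(\lambda+2n)(\lambda'-2n)/2}=c_n$. Summing over $n$ identifies $R(v\otimes w)$ with $\bigl(q^{H\otimes H/2}\sum_{n}\frac{\qnum{1}^{2n}}{\qnum{n}!}q^{n(n-1)/2}\,E^n\otimes F^n\bigr)(v\otimes w)$, and since $V$ is spanned by weight vectors this proves the operator identity. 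I expect the only genuinely delicate point to be this last bookkeeping: the integers $\lambda+2n$ and $\lambda'-2n$ representing the strong weights of $E^n v$ and $F^n w$ may leave the fundamental window $[-2r,2r)$, but this is harmless because $q^{2r}=1$, so shifting either integer exponent by a multiple of $4r$ leaves $q^{H\otimes H/2}$ unchanged — I would make that reduction explicit rather than gloss over it.
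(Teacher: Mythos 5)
Your proof is correct and takes essentially the same approach as the paper's: the paper first commutes the $k^m,k^{m'}$ past $E^n,F^n$ to factor $R=C\circ\widetilde R$ and then evaluates the Cartan factor $C$ once by the same root-of-unity orthogonality argument that you apply term by term, your shifted weights $\lambda+2n$, $\lambda'-2n$ being exactly what that factorization absorbs. Your explicit remarks on the integrality of the weights and on the harmless reduction modulo $4r$ are points the paper leaves implicit.
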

The proof can be found in Appendix~\ref{app:Rmatrix_weight}. Therefore, since the ribbon element is defined by $\theta = m ( S \otimes \Id) R_{21}$ also the action of the twist operator of $D$ \eqref{eq:twist} on a $D$-module is given by the formula for the twist of quantum $\sltwo$. Moreover, the relations \eqref{eq:proj_rels1} and \eqref{eq:proj_rels2} for the projective module $\Ps_i$ coincide with the ones for the projective modules found in \cite{CGP3} (by considering the action of $K$ instead of $k$ on the weight vectors). Therefore, since $\theta$ is central, its action on the indecomposable projective modules $\Ps_i$ can be expressed in terms of $\End(\Ps_i)$ \cite[Lemma 6.10]{CGP3}:
\begin{equation} \label{eq:twist_proj}
\theta_{\Ps_i} = (-1)^i q^{\frac{i^2 + 2 i}{2}} \left( I_{1, i} - (r - i - 1) \frac{\qnum{1}^2}{\qnum{i+1}} x_{1 ,i} \right),
\end{equation}
where $I_{\alpha,i}$ is the identity endomorphism of $\Pmd_i$.

\section{Specializations of the Lawrence representations at roots of unity} \label{sec:WH}

The aim of this section is to adapt the technique of \cite{JK} in order to study the $B_n$-action on the tensor power $\Vs_{r-1}^{\otimes n}$ and to recover, as expected, specializations of the Lawrence representations, with the two variables fixed at roots of unity depending on $q$. We denote these representations by $\WH_{n,\ell}$. Moreover, we identify the basis vectors of $\WH_{n,\ell}$ with vectors in the projective modules of the direct sum decomposition of $\Vs_{r-1}^{\otimes n}$. Then using this identification, we prove that the representations $\WH_{n,\ell}$ are not simple if $n$, $\ell$ and $r$ satisfy a certain modular condition.
\subsection{\texorpdfstring{$B_n$}{Bn}-action on strong weight spaces}
We define a $B_n$-action on $\Vs_{r-1}^{\otimes n}$, for $n \in \Npp$, via the $R$-matrix \eqref{eq:Rmatrix}. Let the map
\begin{equation} \label{eq:Bn_action_V}
\begin{aligned} 
\Rs: \Vs_{r-1} \otimes \Vs_{r-1} &\rightarrow \Vs_{r-1} \otimes \Vs_{r-1},
\\
v \otimes w &\mapsto q^{-\frac{(r - 1)^2}{2}} \, c_{\Vs_{r-1},\Vs_{r-1}}(v \otimes w) = q^{-\frac{(r - 1)^2}{2}} \, \tau \circ R (v \otimes w).
\end{aligned}
\end{equation}
The normalization factor $q^{-\frac{(r - 1)^2}{2}}$ cancels out a constant appearing by the action of $q^{H \otimes H / 2}$ on $\Vs_{r-1} \otimes \Vs_{r-1}$. In detail, for $0 \leq m, m' \leq r-1$ we have:
\[
q^{H \otimes H / 2} (u_m \otimes u_{m'}) = q^{\frac{(r-1-2m)(r-1-2m')}{2}} (u_m \otimes u_{m'})= q^{\frac{(r-1)^2}{2}} q^{\frac{-(m+m')(r-1) + m m'}{2}} (u_m \otimes u_{m'}).
\]
Substituting Equation~\eqref{eq:Rmatrix_unrolled} in the definition of $\Rs$, we can calculate the action of $\Rs$ with respect to the basis $\{u_0, \ldots, u_{r-1}\}$ of $\Vs_{r-1}$ (see Appendix~\ref{app:Rmatrix_V}):
\begin{equation} \label{eq:Rmatrix_V}
\begin{aligned}
&\Rs (u_i \otimes u_j) =
\\
&s^{-(i+j)} \sum_{n=0}^{\min (i,r-j-1)} q^{2(i-n)(j+n)} q^{n(n-1)/2} \qbin{n+j}{j} \prod_{m=0}^{n-1} \qnum{m+j+1} \, u_{j+n} \otimes u_{i-n}.
\end{aligned}
\end{equation}
Moreover, the maps
\begin{equation} \label{eq:sigma_i}
\sigma_i = \1^{\otimes i-1} \otimes \Rs \otimes \1^{\otimes n-i-1},
\end{equation}
define a $B_n$-representation on $\Vs_{r-1}^{\otimes n}$. They satisfy the braid relations by construction and they also commute with the action of $D$.

We now define strong weight subspaces of $\Vs_{r-1}^{\otimes n}$.
Let $\varepsilon_1, \ldots, \varepsilon_n  \in \N$, such that $0 \leq \varepsilon_i < r$, for all $1 \leq i \leq n$. The action of the operator $H_n \colon \Vs_{r-1}^{\otimes n} \to \Vs_{r-1}^{\otimes n}$ \eqref{eq:Hn} is given by
\begin{align*}
H_n (u_{\varepsilon_1} \otimes \ldots \otimes u_{\varepsilon_n})
=\,& \left( n(r-1)-2(\varepsilon_1 + \ldots + \varepsilon_n) \right) u_{\varepsilon_1} \otimes \ldots \otimes u_{\varepsilon_n}.
\end{align*}
We now define the following.

\begin{defn} \label{def:Vnl}
Let $n, r \in \Npp$ and $\ell \in \N$.
The \textit{strong weight space of strong weight \mbox{$n(r-1) - 2 \ell$}} of $\Vs_{r-1}^{\otimes n}$ is defined by
\begin{align*}
\VH_{n,\ell} :=& \ker \left( H_n - \left( n(r-1) - 2 \ell \right) \right)
\\
=& \spn \left\{ u_{\varepsilon_1} \otimes u_{\varepsilon_2} \otimes \ldots \otimes u_{\varepsilon_n} \in \Vs_{r-1}^{\otimes n} \, \mid \, \varepsilon_1+ \cdots + \varepsilon_n = \ell \right\}
 \subset \Vs_{r-1}^{\otimes n}.
\end{align*}
\end{defn}

Note that the vectors of $\VH_{n,\ell}$ have weight $q^{\frac{n(r-1)}{2} - \ell} = s^{n/2} q^{-\ell}$. Therefore, if $\ell, \ell' \in \N$ such that $\ell \equiv \ell' \mod 2r$, the spaces $\VH_{n,\ell}$ and $\VH_{n,\ell'}$ are subspaces of the $k$-eigenspace of weight $s^{n/2} q^{-\ell}$. Therefore, Definition~\ref{def:Vnl} and the use of strong weights provide indeed a finer decomposition than the one into $k$-eigenspaces.

For every $\ell$ the space $\VH_{n,\ell}$ is closed under the $B_n$-action \eqref{eq:sigma_i} and therefore, is also a $B_n$-representation. For $\ell=0$ we obtain an one-dimensional strong weight space spanned by the highest strong weight vector $u_0^{\otimes n} \in \Vs_{r-1}^{\otimes n}$. On the other hand, for $\ell = n(r-1)$ the strong weight space is spanned by the lowest strong weight vector of $u_{r-1}^{\otimes n} \in \Vs_{r-1}^{\otimes n}$. Both of these spaces are the trivial $B_n$-representation. For $0 < \ell < n(r-1)$ we get spaces of higher dimension, from which we extract more interesting $B_n$-representations. Finally, it is evident that $\VH_{n,\ell} = \{ 0 \}$ for $\ell > n(r-1)$.

We proceed by defining the corresponding highest strong weight spaces.
\begin{defn} 
Let $n, r \in \Npp$ and $\ell \in \N$. The \textit{highest strong weight space} corresponding to the strong weight $n(r-1) - 2 \ell$ is defined by
\[
\WH_{n,\ell} := \ker(E) \cap \VH_{n,\ell}.
\]
\end{defn}
The space $\WH_{n,\ell}$ is also a $B_n$-representation, since the action of $\Rs$ intertwines the $D$-action.

\subsection{The Jackson-Kerler construction} In this section, we give a basis for the space $\VH_{n,\ell}$ following \cite{JK}. Define now the following two sets for $n, \ell \geq 2$:
\begin{equation}\label{eq:V_nl_basis}
\begin{aligned}
\mathcal{A}_{n,\ell} &= \{ u_{\varepsilon_1} \otimes u_{\varepsilon_2} \otimes \ldots \otimes u_{\varepsilon_n} \in \VH_{n,\ell} \, \mid \, \exists k \text{ s.t. }\varepsilon_k = 1 \text{ and } \varepsilon_j=0 \; \forall \, j<k \},
\\
\mathcal{B}_{n,\ell} &= \{ u_{\varepsilon_1} \otimes u_{\varepsilon_2} \otimes \ldots \otimes u_{\varepsilon_n} \in \VH_{n,\ell} \, \mid \, \exists k \text{ s.t. }\varepsilon_k > 1 \text{ and } \varepsilon_j=0 \; \forall \, j<k \},
\end{aligned}
\end{equation}
and set for all $\ell \geq 0$:
\[
\AH_{n,\ell} = \spn(\mathcal{A}_{n,\ell}) \quad \text{and} \quad \BH_{n,\ell} = \spn(\mathcal{B}_{n,\ell}).
\]
For $\ell=1$ and $\ell=0$ the definitions of $\mathcal{A}_{n,\ell}$ and $\mathcal{B}_{n,\ell}$ are slightly modified:
\[
\begin{array}{rlcrl}
\mathcal{A}_{n,1} &= \{ c_i \mid 1 \leq i \leq n -1 \} &\andq& \mathcal{B}_{n,1} &= \{ c_n\},
\\
\mathcal{A}_{n,0} &= \{ u_0^{\otimes n } \} &\andq& \mathcal{B}_{n,0} &= \emptyset.
\end{array}
\]
where
\begin{equation} \label{eq:ci}
c_i :=  u_0^{\otimes i -1} \otimes u_1 \otimes u_0^{\otimes n-i} \in \VH_{n,1}.
\end{equation}
It is evident that $\VH_{n,\ell} = \AH_{n,\ell} \oplus \BH_{n,\ell}$. 

For any $x \in \N$ a \textit{$p$-composition} of $x$ is a tuple $\vec{y}= (y_1, \ldots, y_p)$ such that $y_i \in \Np$, for all $1 \leq i \leq p$, and $y_1 + \ldots + y_p = x$. A \textit{weak $p$-composition} of $x$ is a tuple $\vec{y}$ such that $y_i \in \N$, for all $1 \leq i \leq p$, and $y_1 + \ldots + y_p = x$. Moreover, for $j>1$ let $\vec{\varepsilon} = (\varepsilon_j, \ldots, \varepsilon_n)$ be a weak composition of $\ell-1$. Then any element of $\mathcal{A}_{n,\ell}$ can be written as
\begin{equation} \label{eq:a_epsilon}
a_{\vec{\varepsilon}} := u_0^{\otimes j -2} \otimes u_1 \otimes u_{\vec{\varepsilon}},
\quad \text{ where } u_{\vec{\varepsilon}} = u_{\varepsilon_j} \otimes \ldots \otimes u_{\varepsilon_n} \in \VH_{n-j+1,\ell-1}.
\end{equation}

\begin{rem} \label{rem:Rmatrix_formula}
Let $u_{\varepsilon_1} \otimes \ldots \otimes u_{\varepsilon_n} \in \VH_{n,\ell}$. Then by Definition~\ref{def:Vnl} we have $\varepsilon_k + \varepsilon_{k'} \leq \ell < r$ for all $1 \leq k,k' \leq n$. So, for all $0 \leq i \leq n-1$ we have $\varepsilon_i + \varepsilon_{i+1} < r \Leftrightarrow \varepsilon_i < r - \varepsilon_{i+1}$ and hence, $\min (\varepsilon_i,r-\varepsilon_{i+1}-1) = \varepsilon_i$. Therefore, \eqref{eq:Rmatrix_V} can be written as:
\begin{align*}
&\Rs (u_i \otimes u_j) =
s^{-(i+j)} \sum_{n=0}^{i} q^{2(i-n)(j+n)} q^{n(n-1)/2} \qbin{n+j}{j} \prod_{m=0}^{n-1} \qnum{m+j+1} \, u_{j+n} \otimes u_{i-n}.
\end{align*}
The above formula coincides with the formula for the action of the $R$-matrix on the Verma module of quantum $\sltwo$ \cite[Eq. 22]{JK} (by substituting the variables $q$ and $s$ with $q= e^{\pi \im /r}$ and $s=q^{r-1}$). Therefore the calculations involving the action of $\Rs$ on $\VH_{n,\ell}$ (and hence for the $B_n$-action on $\VH_{n,\ell}$) remain the same as in \cite{JK}.
\end{rem}

\subsection{Dimension of the spaces \texorpdfstring{$\VH_{n,\ell}$}{VH}} \label{sec:dims}
In \cite{JK} the corresponding space to $\VH_{n,\ell}$, denoted here by $\Vb_{n,\ell}$, is defined similarly, except for the fact that the indices $\varepsilon_1, \ldots, \varepsilon_n$ are any elements of $\N$. Thus, $\dim \Vb_{n,\ell}$ equals $N(\ell,n) = \binom{n + \ell - 1}{n-1}$, that is, the number of weak $n$-compositions of $\ell$, or equivalently the number of $n$-compositions of $\ell+n$ (see also \cite[Section 1.2]{Sta}). Let now $\kappa(\ell,r,n)$ be the number of \textit{weak $n$-compositions of $\ell$ with each part less than $r$}. By \cite[Section 1, Exercise 28]{Sta} we have
\[
\kappa(\ell,r,n) = \sum_{\substack{t+sr=\ell\\(t,s)\in \N^2}} (-1)^s \binom{n+t-1}{t} \binom{n}{s}.
\]
Note that, for $0 \leq \ell < r$, it holds $\kappa(\ell,r,n) = N(\ell,n)$ and hence, $\dim\VH_{n,\ell} = \dim \Vb_{n,\ell}$, for $0 \leq \ell < r$. In our case, due to the finiteness of $\Vs_{r-1}$ we have the following.

\begin{lemma} \label{lem:dim_Vnl}
Let $n, r \in \Npp$ and $\ell \in \N$. It holds that
\[
\dim \VH_{n,\ell} = \kappa(\ell,r,n).
\]
\end{lemma}

For example, for $\ell = r$, we can easily observe that $\dim \VH_{n,r} < \dim \Vb_{n,r}$. By Lemma~\ref{lem:dim_Vnl} we get:
\[
\dim \VH_{n,r} = \kappa(r,r,n) = \binom{n+r-1}{r} - n = \dim \Vb_{n,r} - n,
\]
since the only integer solutions to the equation $t + sr = r$ are $(t,s) = (r,0)$ and $(t,s) = (0,1)$. The $n$ vectors allowed in $\Vb_{n,r}$ but not in $\VH_{n,r}$ are of the form $u_0^{i-1} \otimes u_r \otimes u_0^{n-i} \in \Vb_{n,r}$, with $1 \leq i \leq n$. Furthermore, it holds that $\dim \VH_{n,n(r-1)-\ell} = \dim \VH_{n,\ell}$, since there is a bijection between weak $n$-compositions of $\ell$ with parts less than $r$ and weak $n$-compositions of $n(r-1)-\ell$ with parts less than $r$.

We now prove the following for $\ell < r$.

\begin{lemma}
Let $n, r \in \Npp$ and $\ell \in \N$ such that $\ell < r$. It holds that
\begin{align}
\dim \AH_{n,\ell} &= \binom{n + \ell - 2}{n-2}, \label{eq:dim_Anl}
\\
\dim \BH_{n,\ell} &= \VH_{n,\ell-1}. \label{eq:dim_Bnl}
\end{align}
\end{lemma}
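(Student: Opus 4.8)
The plan is to reduce both identities to elementary enumeration of weak compositions together with the hockey-stick identity for binomial coefficients. First I would record that, because $\ell<r$, every weak $n$-composition $(\varepsilon_1,\dots,\varepsilon_n)$ of $\ell$ automatically has all parts $\le\ell<r$; hence $\mathcal{A}_{n,\ell}\cup\mathcal{B}_{n,\ell}$ is precisely the set of tensors $u_{\varepsilon_1}\otimes\cdots\otimes u_{\varepsilon_n}$ with $\sum_i\varepsilon_i=\ell$, which is a basis of $\VH_{n,\ell}$; in particular $\dim\VH_{n,\ell}=N(\ell,n)=\binom{n+\ell-1}{n-1}$ and $\VH_{n,\ell}=\AH_{n,\ell}\oplus\BH_{n,\ell}$, as already noted. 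Consequently it suffices to establish \eqref{eq:dim_Anl}: then, using Pascal's rule,
\[
\dim\BH_{n,\ell}=\binom{n+\ell-1}{n-1}-\binom{n+\ell-2}{n-2}=\binom{n+\ell-2}{n-1}=N(\ell-1,n)=\dim\VH_{n,\ell-1},
\]
where the last equality invokes Lemma~\ref{lem:dim_Vnl} together with $\ell-1<r$ (for $\ell=0$ both sides vanish).

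To prove \eqref{eq:dim_Anl} I would first dispose of the degenerate cases directly from the modified definitions: $\dim\AH_{n,0}=1=\binom{n-2}{n-2}$ and $\dim\AH_{n,1}=n-1=\binom{n-1}{n-2}$. For $\ell\ge 2$ I would use the parametrization \eqref{eq:a_epsilon}: an element of $\mathcal{A}_{n,\ell}$ is determined by the position $k=j-1$ of its first nonzero tensor factor — which must equal $u_1$ — together with a weak $(n-k)$-composition of $\ell-1$ filling the remaining slots; since $\ell-1\ge 1$ there must be at least one slot after $u_1$, so $k$ ranges over $\{1,\dots,n-1\}$. This gives
\[
\dim\AH_{n,\ell}=\sum_{k=1}^{n-1}N(\ell-1,n-k)=\sum_{m=1}^{n-1}\binom{m+\ell-2}{\ell-1},
\]
and the hockey-stick identity $\sum_{m=1}^{n-1}\binom{m+\ell-2}{\ell-1}=\binom{n+\ell-2}{\ell}=\binom{n+\ell-2}{n-2}$ completes the argument.

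These computations are routine, so I do not expect a genuine obstacle; the only places that call for attention are the degenerate values $\ell\in\{0,1\}$, where one must use the modified definitions of $\mathcal{A}_{n,\ell}$ and $\mathcal{B}_{n,\ell}$, and the bookkeeping of the shift between the index $j$ in \eqref{eq:a_epsilon} and the summation variable in the hockey-stick step. As an alternative to the Pascal computation, \eqref{eq:dim_Bnl} can be proved by an explicit bijection between $\mathcal{B}_{n,\ell}$ and the standard basis of $\VH_{n,\ell-1}$: send a tuple whose first nonzero part $\varepsilon_k\ge 2$ occurs at position $k$ to the tuple obtained by replacing $\varepsilon_k$ with $\varepsilon_k-1$; the inverse map adds $1$ back to the first nonzero part, and one checks both maps are well defined and land in the asserted sets.
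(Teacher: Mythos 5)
Your proof is correct and follows essentially the same route as the paper: count the elements of $\mathcal{A}_{n,\ell}$ by the position of the leading $u_1$, identify the remaining factors with weak compositions of $\ell-1$ (using $\ell<r$ to drop the part-size restriction), sum with the hockey-stick identity, and obtain $\dim\BH_{n,\ell}$ by subtraction inside $\VH_{n,\ell}=\AH_{n,\ell}\oplus\BH_{n,\ell}$. Your explicit treatment of $\ell\in\{0,1\}$ and the optional bijective argument for \eqref{eq:dim_Bnl} are harmless additions that do not change the substance.
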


\begin{proof}
Let $u_0^{\otimes i-1} \otimes u_1 \otimes u_{\varepsilon_{i+1}} \otimes \ldots \otimes u_{\varepsilon_{n}} \in \AH_{n,\ell}$ for $\ell < r$ and some $1 \leq i \leq n$. Then $\varepsilon_{i+1} + \ldots + \varepsilon_{n-1} = \ell - 1$. Therefore, $u_0^{\otimes i-1} \otimes u_1 \otimes u_{\varepsilon_{i+1}} \otimes \ldots \otimes u_{\varepsilon_{n}}$ corresponds to a weak $(n-i)$-composition of $\ell - 1$ with parts less than $r$. So:
\begin{align*}
\dim \AH_{n,\ell} &= \sum_{i=1}^n \kappa(\ell-1, r, n-i) 
\stackrel{\ell < r}{=} \sum_{i=1}^n \binom{n + \ell - i - 2}{n - i - 1}
\\
&= \sum_{i=1}^n \binom{n + \ell - i - 2}{\ell- 1}
= \sum_{m=\ell-2}^{n- \ell -3} \binom{m}{\ell - 1} = \binom{n + \ell - 2}{n-2}.
\end{align*}
Finally
\[
\dim \BH_{n,\ell} = \dim \VH_{n,\ell} - \dim \AH_{n,\ell} = \binom{n + \ell - 2}{n-1} = \dim \VH_{n,\ell-1}. \qedhere
\]
\end{proof}

\subsection{The highest strong weight spaces \texorpdfstring{$\WH_{n,\ell}$}{WH}}
In this subsection we give a basis for the space $\WH_{n,\ell}$, when $\ell < r$, following \cite{JK}. By \eqref{eq:dim_Bnl} the dimensions of the spaces $\BH_{n,\ell}$ and $\VH_{n,\ell-1}$ are equal for $\ell < r$. Moreover, as the following lemma shows, these two spaces are isomorphic as $\C$-vector spaces.

\begin{lemma} \label{lem:E_iso}
Let $n, r \in \Npp$ and $\ell \in \N$. The map $E \vert_{\BH_{n,\ell}}: \BH_{n,\ell} \rightarrow \VH_{n,\ell-1}$ is a $\C$-vector space isomorphism for $1 \leq \ell < r$. In detail, it is injective for all $\ell \geq 1$ and surjective for all $1 \leq \ell < r$.
\end{lemma}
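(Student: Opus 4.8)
The plan is to make the action of $E$ on the standard basis of $\BH_{n,\ell}$ completely explicit and then run a triangularity argument with respect to the lexicographic order on index tuples. Iterating the coproduct $\Delta(E)=1\otimes E+E\otimes k^{2}$ shows that $E$ acts on $\Vs_{r-1}^{\otimes n}$ as $\sum_{i=1}^{n}\1^{\otimes(i-1)}\otimes E\otimes K^{\otimes(n-i)}$, where $K=k^{2}$ acts by $K u_{m}=s q^{-2m}u_{m}$ (recall $s=q^{r-1}$). Combining this with \eqref{eq:action_Va} yields, for $u_{\vec{\varepsilon}}=u_{\varepsilon_{1}}\otimes\cdots\otimes u_{\varepsilon_{n}}\in\VH_{n,\ell}$,
\[
E\,u_{\vec{\varepsilon}}=\sum_{i\,:\,\varepsilon_{i}\geq 1}\Bigl(\prod_{j=i+1}^{n}s q^{-2\varepsilon_{j}}\Bigr)\,u_{\vec{\varepsilon}-e_{i}},
\]
where $e_{i}\in\N^{n}$ is the $i$-th coordinate vector. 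Because $E$ raises the strong weight by $2$, it maps $\BH_{n,\ell}$ into $\VH_{n,\ell-1}$, as required. The case $\ell=1$ is immediate: $\mathcal B_{n,1}=\{c_{n}\}$ and $E\,c_{n}=u_{0}^{\otimes n}$ spans $\VH_{n,0}$. So assume from now on $\ell\geq 2$.

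For $u_{\vec{\varepsilon}}\in\mathcal B_{n,\ell}$ let $k=k(\vec{\varepsilon})$ denote the position of the first nonzero entry of $\vec{\varepsilon}$, so $\varepsilon_{j}=0$ for $j<k$ and $\varepsilon_{k}\geq 2$, and set $\phi(\vec{\varepsilon})=\vec{\varepsilon}-e_{k}$. Since $\varepsilon_{k}-1\geq 1$, the tuple $\phi(\vec{\varepsilon})$ again has its first nonzero entry in position $k$, and $u_{\phi(\vec{\varepsilon})}$ is a well-defined standard basis vector of $\VH_{n,\ell-1}$. In the displayed formula the summand $i=k$ equals $s^{\,n-k}q^{-2(\ell-\varepsilon_{k})}\,u_{\phi(\vec{\varepsilon})}$, with nonzero coefficient. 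The combinatorial core of the argument is the claim: \emph{if $\vec{\varepsilon},\vec{\varepsilon}'\in\mathcal B_{n,\ell}$ and $u_{\phi(\vec{\varepsilon}')}$ occurs with nonzero coefficient in $E\,u_{\vec{\varepsilon}}$, then $\vec{\varepsilon}'=\vec{\varepsilon}$ or $\vec{\varepsilon}'>_{\mathrm{lex}}\vec{\varepsilon}$.} To prove this, observe that the hypothesis forces $\vec{\varepsilon}-e_{i}=\phi(\vec{\varepsilon}')=\vec{\varepsilon}'-e_{k(\vec{\varepsilon}')}$ for some $i$ with $\varepsilon_{i}\geq 1$, whence $i\geq k(\vec{\varepsilon})$; a short case distinction on the position of $k(\vec{\varepsilon}')$ relative to $k(\vec{\varepsilon})$, using that $\varepsilon'_{k(\vec{\varepsilon}')}\geq 2$ and $\varepsilon'_{j}=0$ for $j<k(\vec{\varepsilon}')$, rules out $k(\vec{\varepsilon}')\neq k(\vec{\varepsilon})$ and forces, in the remaining case, $\vec{\varepsilon}'=\vec{\varepsilon}+e_{k}-e_{i}$ with $i>k=k(\vec{\varepsilon})$; this move strictly increases the entry in position $k$ and leaves the vanishing entries in positions $<k$ unchanged, so $\vec{\varepsilon}'>_{\mathrm{lex}}\vec{\varepsilon}$.

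Granting the claim, injectivity is immediate. If a nonzero $v=\sum_{\vec{\varepsilon}\in\mathcal B_{n,\ell}}a_{\vec{\varepsilon}}\,u_{\vec{\varepsilon}}$ belonged to $\ker E$, choose $\vec{\varepsilon}^{*}$ minimal for $<_{\mathrm{lex}}$ among those $\vec{\varepsilon}$ with $a_{\vec{\varepsilon}}\neq 0$; by the claim, the only term of $E\,v$ contributing to the coefficient of the basis vector $u_{\phi(\vec{\varepsilon}^{*})}$ is the one coming from $\vec{\varepsilon}^{*}$ itself, so that coefficient equals $a_{\vec{\varepsilon}^{*}}\,s^{\,n-k(\vec{\varepsilon}^{*})}q^{-2(\ell-\varepsilon^{*}_{k(\vec{\varepsilon}^{*})})}\neq 0$, contradicting $E\,v=0$. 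Hence $E|_{\BH_{n,\ell}}$ is injective for every $\ell\geq 1$. Finally, for $1\leq\ell<r$ we have $\dim\BH_{n,\ell}=\dim\VH_{n,\ell-1}$ by \eqref{eq:dim_Bnl}, so an injective linear map between spaces of equal finite dimension is an isomorphism; in particular $E|_{\BH_{n,\ell}}$ is surjective on that range.

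I expect the most delicate step to be the case analysis behind the combinatorial claim, namely verifying that, apart from the leading summand $u_{\phi(\vec{\varepsilon})}$, no term $u_{\vec{\varepsilon}-e_{i}}$ of $E\,u_{\vec{\varepsilon}}$ can coincide with $u_{\phi(\vec{\varepsilon}')}$ for a $\mathrm{lex}$-smaller tuple $\vec{\varepsilon}'$ — in particular never for one with a smaller leading position. Once this triangularity is established, the dimension identity \eqref{eq:dim_Bnl} finishes the proof.
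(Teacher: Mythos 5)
Your proof is correct, and it diverges from the paper's in a useful way on the surjectivity half. For injectivity, you and the paper are doing essentially the same thing: both arguments isolate an extremal basis vector in the support of $b\in\BH_{n,\ell}$ and show that a designated term of its image under $E$ cannot be cancelled. The paper picks the term whose first nonzero position is minimal and whose entry there is smallest, and argues informally that the corresponding term of $E\,b$ survives; you make the coproduct formula $E=\sum_i \1^{\otimes(i-1)}\otimes E\otimes K^{\otimes(n-i)}$ explicit, fix the lexicographic order, and prove the triangularity claim by the case distinction on $k(\vec{\varepsilon}')$ versus $k(\vec{\varepsilon})$ --- which I checked and which does close all cases (the hypothesis $\varepsilon'_{k(\vec{\varepsilon}')}\geq 2$ kills $k'<k$, and $\varepsilon'_{k}\geq\varepsilon_k-1\geq 1$ kills $k'>k$). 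This is the same idea with the bookkeeping done more carefully. For surjectivity the routes genuinely differ: the paper runs an induction on $j=\ell-\varepsilon_m$ that explicitly constructs a preimage $b=u_0^{\otimes m-1}\otimes u_{\varepsilon_m+1}\otimes\cdots$, and the hypothesis $\ell<r$ appears transparently as the condition $\varepsilon_m+1<r$ needed for that preimage to exist; you instead invoke injectivity together with the dimension identity \eqref{eq:dim_Bnl} (proved earlier by counting compositions with parts $<r$, so there is no circularity) and conclude by equality of finite dimensions. Your version is shorter and avoids the induction entirely, at the cost of being non-constructive and of burying the role of $\ell<r$ inside the combinatorial dimension count rather than exhibiting where the preimage construction breaks down for $\ell\geq r$.
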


\begin{proof}
For the proof we modify the proof of \cite[Lemma 8]{JK}.

We need to prove that for every $u_{\vec{\varepsilon}} = u_{\varepsilon_1} \otimes \ldots \otimes u_{\varepsilon_n} \in \VH_{n,\ell-1}$, there exists some $b \in \BH_{n,\ell}$ such that $E \, b = u_{\vec{\varepsilon}}$. Let $\varepsilon_m$ be the first non-zero index in $\vec{\varepsilon} = (\varepsilon_1, \ldots, \varepsilon_n)$ and set $j := \ell - \varepsilon_m$. We apply induction on $j$. When $j=1$, it holds that $\varepsilon_m = \ell-1$ and we have that $E (u_0^{\otimes m-1} \otimes u_\ell \otimes u_0^{\otimes n-m}) = u_0^{\otimes m-1} \otimes u_{\ell-1} \otimes u_0^{\otimes n-m}$.

For the induction step we take $u_{\vec{\varepsilon}} = u_0^{\otimes m-1} \otimes u_{\varepsilon_{m}} \otimes \ldots \otimes u_{\varepsilon_{n}} \in \VH_{n,\ell-1}$ such that $\ell - \varepsilon_m = j + 1$. Set $b = u_0^{\otimes m-1} \otimes u_{\varepsilon_{m}+1} \otimes \ldots \otimes u_{\varepsilon_{n}} \in \BH_{n,\ell-1}$. Such an element exists in $\BH_{n,\ell-1}$ if and only if $\varepsilon_{m}+1 < r$, or equivalently, $\ell - j < r$. This condition holds for any $j \in \Np$ if and only if $\ell < r$. By assuming that the condition $\ell < r$ holds, the element $b \in \BH_{n,\ell-1}$ exists, so we have that:
\[
E \, b = \zeta \, u_{\vec{\varepsilon}} + \text{(other terms)}, \quad \text{ with } \zeta \in \C.
\]
Now $E$ is surjective on the ``other terms'' of the relation, since they satisfy the induction hypothesis (the first non-zero index on all of them is $\varepsilon_m + 1$). Hence, $E \vert_{\BH_{n,\ell}}: \BH_{n,\ell} \rightarrow \VH_{n,\ell-1}$ is surjective for $1 \leq \ell < r$.

It remains to show that $\ker E \vert_{\BH_{n,\ell}} = \{0\}$. Let $b$ be a non-zero element in $\BH_{n,\ell}$. In the expression of $b$ there exists a minimal term $u_{\varepsilon_1} \otimes \ldots \otimes u_{\varepsilon_m} \otimes \ldots \otimes u_{\varepsilon_n}$ such that $\varepsilon_i = 0$ for all $i < m$ and $2 \leq \varepsilon_m < r$ and if $u_{\varepsilon'_1} \otimes \ldots \otimes u_{\varepsilon'_m} \otimes \ldots \otimes u_{\varepsilon'_n}$ is contained in the expression of $b$, then $\varepsilon'_i = 0$ for all $i < m$ and either $\varepsilon'_m=0$ or $\varepsilon_m \leq \varepsilon'_m < r$. Applying the operator $E$ on $b$, we obtain an expression of $E \, b$ containing the term $u_{\varepsilon_1} \otimes \ldots \otimes u_{\varepsilon_{m-1}} \otimes \ldots \otimes u_{\varepsilon_n}$. This term cannot be canceled by some other term in $E \, b$ (due to minimality), hence $E b \neq 0$ and finally, $\ker E \vert_{\BH_{n,\ell}} = \{0\}$ for every $1 \leq \ell$.
\end{proof}

Let $a_{\vec{\varepsilon}} \in \AH_{n,\ell}$ as in \eqref{eq:a_epsilon}, that is:
\[
a_{\vec{\varepsilon}} = u_0^{\otimes j -2} \otimes u_1 \otimes u_{\vec{\varepsilon}},
\quad \text{ where } u_{\vec{\varepsilon}} = u_{\varepsilon_j} \otimes \ldots \otimes u_{\varepsilon_n} \in \VH_{n-j+1,\ell-1},
\]
for a $2 \leq j \leq n$.
Due to the above isomorphism, $\ker E$ can be parametrized for $1 \leq \ell < r$ by the space $\AH_{n,\ell}$, using the following map \cite{JK}:
\begin{equation} \label{eq:map_phi}
\begin{aligned}
\Phi: \VH_{n,\ell} &\rightarrow \VH_{n,\ell} \\
a_{\vec{\varepsilon}} &\mapsto  \sum_{m=0}^{\ell} b_{\vec{\varepsilon},m} \, u_0^{\otimes j - 2} \otimes u_m \otimes E^{m-1} \, u_{\vec{\varepsilon}} &\text{ for } a_{\vec{\varepsilon}} \in \mathcal{A}_{n,\ell}\\
b &\mapsto b &\text{ for } b \in \mathcal{B}_{n,\ell},
\end{aligned}
\end{equation}
where
\[
b_{\vec{\varepsilon},m} = (-1)^{m-1} s^{(m-1)(j-n-1)} q^{(m-1)(2\ell-m-2)},
\]
and where $E^{-1} \, a_{\vec{\varepsilon}}$ denotes the unique element in $\BH_{n-j+1,\ell}$ such that $E(E^{-1} \, a_{\vec{\varepsilon}}) = a_{\vec{\varepsilon}}$. This element exists due to the isomorphism between $\BH_{n,\ell}$ and $\VH_{n,\ell-1}$ for $1 \leq \ell < r$. Furthermore, we have the following statement.

\begin{lemma} \label{lem:Phi_auto}
Let $n, r \in \Npp$ and $\ell \in \N$ such that $0 \leq \ell < r$. The map $\Phi$ is an automorphism of $\VH_{n,\ell}$ with $(\Phi - \1)^2 = 0$.
\end{lemma}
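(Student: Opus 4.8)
The plan is to show two things: that $\Phi$ is invertible, and that $(\Phi - \1)^2 = 0$. The cleanest route is to prove the nilpotency statement $(\Phi - \1)^2 = 0$ first, since invertibility is then automatic: if $N := \Phi - \1$ satisfies $N^2 = 0$, then $\Phi = \1 + N$ has inverse $\1 - N$, so $\Phi$ is an automorphism. So the substantive content is entirely in showing $N^2 = 0$.

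To analyze $N = \Phi - \1$ I would work with the direct sum decomposition $\VH_{n,\ell} = \AH_{n,\ell} \oplus \BH_{n,\ell}$. By the definition \eqref{eq:map_phi}, $\Phi$ fixes $\BH_{n,\ell}$ pointwise, so $N$ vanishes on $\BH_{n,\ell}$; and on a basis vector $a_{\vec\varepsilon} \in \mathcal A_{n,\ell}$ we have, reading off the $m=1$ term (whose coefficient $b_{\vec\varepsilon,1} = 1$ and whose vector is $u_0^{\otimes j-2} \otimes u_1 \otimes E^0 u_{\vec\varepsilon} = a_{\vec\varepsilon}$), that
\[
N(a_{\vec\varepsilon}) = \sum_{m=2}^{\ell} b_{\vec\varepsilon,m} \, u_0^{\otimes j-2} \otimes u_m \otimes E^{m-1} u_{\vec\varepsilon}.
\]
The key structural observation is that each summand on the right lies in $\BH_{n,\ell}$: the leading nonzero tensor slot is position $j-1$ with entry $u_m$ where $m \geq 2$, which is exactly the defining condition for $\mathcal B_{n,\ell}$. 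Hence $N$ maps $\AH_{n,\ell}$ into $\BH_{n,\ell}$ and kills $\BH_{n,\ell}$; composing, $N^2 = 0$ follows immediately from $N(\BH_{n,\ell}) = 0$ and $N(\AH_{n,\ell}) \subseteq \BH_{n,\ell}$, with no further computation needed. The cases $\ell = 0, 1$ are degenerate: for $\ell = 0$ one has $\BH_{n,0} = \emptyset$ and $\Phi = \1$; for $\ell = 1$, $\Phi$ again acts as the identity on the spanning set since the sum over $m$ reduces to the single term $m = 1$.

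The main point requiring care — and the step I expect to be the real obstacle — is justifying that $\Phi$ is well-defined as a map $\VH_{n,\ell} \to \VH_{n,\ell}$, i.e. that the expression $u_0^{\otimes j-2} \otimes u_m \otimes E^{m-1} u_{\vec\varepsilon}$ actually makes sense and lands in $\VH_{n,\ell}$. This is where the hypothesis $\ell < r$ is essential: writing $a_{\vec\varepsilon} = u_0^{\otimes j-2} \otimes u_1 \otimes u_{\vec\varepsilon}$ with $u_{\vec\varepsilon} \in \VH_{n-j+1,\ell-1}$, one needs $E^{m-1} u_{\vec\varepsilon}$ to be interpreted via the inverse of the isomorphism $E\vert_{\BH}$ of Lemma~\ref{lem:E_iso}, which requires $\ell < r$; and one must check the total strong weight bookkeeping so that $u_m$ in slot $j-1$ together with $E^{m-1} u_{\vec\varepsilon}$ (of strong weight shifted by $2(m-1)$) still sums to $n(r-1) - 2\ell$. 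Once well-definedness is granted, the nilpotency and hence the automorphism property are formal, exactly as above; I would therefore spend the bulk of the argument on the well-definedness and the identification of the image of $N$ inside $\BH_{n,\ell}$, and dispatch $(\Phi-\1)^2 = 0$ and invertibility in a single line each.
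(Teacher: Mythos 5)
Your proof is correct and follows essentially the same route as the paper's: $\Phi$ is the identity on $\BH_{n,\ell}$, the $m=1$ term of $\Phi(a_{\vec{\varepsilon}})$ reproduces $a_{\vec{\varepsilon}}$ since $b_{\vec{\varepsilon},1}=1$, and all remaining terms lie in $\BH_{n,\ell}$, so $\Phi-\1$ maps $\AH_{n,\ell}$ into $\BH_{n,\ell}$ and kills $\BH_{n,\ell}$. One small correction: the sum defining $\Phi(a_{\vec{\varepsilon}})$ starts at $m=0$, so $N(a_{\vec{\varepsilon}})$ also contains the term $b_{\vec{\varepsilon},0}\, u_0^{\otimes j-1}\otimes E^{-1}u_{\vec{\varepsilon}}$, which your displayed sum over $m\geq 2$ omits; this term also lies in $\BH_{n,\ell}$ because $E^{-1}u_{\vec{\varepsilon}}\in\BH_{n-j+1,\ell}$, so the conclusion is unaffected.
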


The proof of Lemma~\ref{lem:Phi_auto} is similar to the one of \cite[Lemma 9]{JK} and can be found in Appendix~\ref{app:Phi_auto}. Therefore, $\Phi$ is a change of basis map on $\VH_{n,\ell}$, under which the space $\WH_{n,\ell} = \ker E \cap \VH_{n,\ell}$ can be parametrized by $\AH_{n, \ell}$ for $0 \leq \ell < r$. In detail, we have the following:

\begin{lemma} \label{lem:Wnl_Anl}
Let $n, r \in \Npp$ and $\ell \in \N$ such that $1 \leq \ell < r$.
The map $E \circ \Phi$ vanishes on $\AH_{n,\ell}$ and is injective on $\BH_{n,\ell}$ with:
\[
E \circ \Phi = 0 \oplus E \vert_{\BH_{n,\ell}} : \AH_{n,\ell} \oplus \BH_{n,\ell} \rightarrow \VH_{n,\ell-1}
\]
and hence $\Phi$ is an isomorphism of vector spaces: $\Phi: \AH_{n,\ell} \xrightarrow{\cong} \WH_{n,\ell}$.
\end{lemma}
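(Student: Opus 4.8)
The plan is that the only substantive point is that $E\circ\Phi$ kills $\AH_{n,\ell}$; everything else is formal, using the two preceding lemmas. By the very definition \eqref{eq:map_phi}, $\Phi$ is the identity on $\BH_{n,\ell}$, so $E\circ\Phi$ restricted to $\BH_{n,\ell}$ is literally $E\vert_{\BH_{n,\ell}}$, which is injective (indeed an isomorphism onto $\VH_{n,\ell-1}$) by Lemma~\ref{lem:E_iso}, since $\ell<r$. Combining the two pieces over the decomposition $\VH_{n,\ell}=\AH_{n,\ell}\oplus\BH_{n,\ell}$ yields $E\circ\Phi=0\oplus E\vert_{\BH_{n,\ell}}$ and $\ker(E\circ\Phi)=\AH_{n,\ell}$. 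Since $\Phi$ is an automorphism of $\VH_{n,\ell}$ by Lemma~\ref{lem:Phi_auto}, we get $\Phi^{-1}(\WH_{n,\ell})=\Phi^{-1}(\ker E)=\ker(E\circ\Phi)=\AH_{n,\ell}$, and hence $\Phi$ restricts to a linear isomorphism $\AH_{n,\ell}\xrightarrow{\cong}\WH_{n,\ell}$. No dimension count is needed for this last step, although one could alternatively invoke $\dim\AH_{n,\ell}=\binom{n+\ell-2}{n-2}$ together with $\dim\WH_{n,\ell}=\dim\VH_{n,\ell}-\dim\VH_{n,\ell-1}$.

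So the real task is the identity $E\,\Phi(a_{\vec{\varepsilon}})=0$ for each generator $a_{\vec{\varepsilon}}=u_0^{\otimes j-2}\otimes u_1\otimes u_{\vec{\varepsilon}}\in\mathcal{A}_{n,\ell}$, where $u_{\vec{\varepsilon}}\in\VH_{n-j+1,\ell-1}$ as in \eqref{eq:a_epsilon}. I would apply $E$ through the iterated coproduct: from $\Delta(E)=1\otimes E+E\otimes k^2$ one gets that $E$ acts on an $n$-fold tensor as $\sum_{p=1}^{n}1^{\otimes p-1}\otimes E\otimes(k^2)^{\otimes n-p}$. Applied to the $m$-th summand $u_0^{\otimes j-2}\otimes u_m\otimes E^{m-1}u_{\vec{\varepsilon}}$ of $\Phi(a_{\vec{\varepsilon}})$, the terms in which $E$ hits one of the leading $u_0$'s vanish since $Eu_0=0$; the term in which $E$ hits the middle slot produces $u_{m-1}$ together with the scalar $s^{\,n-j+1}q^{\,2(m-\ell)}=\big(s^{(n-j+1)/2}q^{m-\ell}\big)^2$, coming from $k^2$ acting on the weight of $E^{m-1}u_{\vec{\varepsilon}}$; and the remaining terms assemble into $u_0^{\otimes j-2}\otimes u_m\otimes E^m u_{\vec{\varepsilon}}$, using $E\,E^{m-1}u_{\vec{\varepsilon}}=E^m u_{\vec{\varepsilon}}$ (and, for $m=0$, $E\,E^{-1}u_{\vec{\varepsilon}}=u_{\vec{\varepsilon}}$, the preimage $E^{-1}u_{\vec{\varepsilon}}\in\BH_{n-j+1,\ell}$ existing by Lemma~\ref{lem:E_iso}). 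Summing over $m$ and shifting the index in the first family by one, the coefficient of $u_0^{\otimes j-2}\otimes u_m\otimes E^m u_{\vec{\varepsilon}}$ becomes $b_{\vec{\varepsilon},m}+s^{\,n-j+1}q^{\,2(m+1-\ell)}\,b_{\vec{\varepsilon},m+1}$; and a one-line check shows that the closed form $b_{\vec{\varepsilon},m}=(-1)^{m-1}s^{(m-1)(j-n-1)}q^{(m-1)(2\ell-m-2)}$ is exactly the solution, normalized by $b_{\vec{\varepsilon},1}=1$, of the two-term recursion $b_{\vec{\varepsilon},m+1}=-\,s^{\,j-n-1}q^{\,2(\ell-m-1)}\,b_{\vec{\varepsilon},m}$. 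Hence every such coefficient vanishes. Since the vectors $u_0^{\otimes j-2}\otimes u_m\otimes E^m u_{\vec{\varepsilon}}$ for distinct $m$ (with $E^m u_{\vec{\varepsilon}}\neq 0$) are linearly independent — the $u_m$ occupy the same tensor slot — and the would-be top term has $E^{\ell}u_{\vec{\varepsilon}}=0$ for degree reasons, there is no accidental cancellation: each coefficient must vanish on its own, and $E\,\Phi(a_{\vec{\varepsilon}})=0$.

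The one delicate point, and the main obstacle, is the bookkeeping of the $k^2$-weights produced by $\Delta(E)$: one must track that only the factors strictly to the right of the slot hit by $E$ are acted on by $k^2$, and evaluate that scalar correctly using that $E^{m-1}$ multiplies the $k$-eigenvalue of $u_{\vec{\varepsilon}}$ by $q^{m-1}$. This is the same mechanism used in \cite{JK} (see also Remark~\ref{rem:Rmatrix_formula}), so the computation goes through verbatim after the substitutions $q=e^{\pi\im/r}$ and $s=q^{r-1}$; I expect no genuinely new difficulty beyond the $m=0$ boundary term, which is precisely where the hypothesis $\ell<r$ is used, namely for the existence of $E^{-1}u_{\vec{\varepsilon}}$ via Lemma~\ref{lem:E_iso}.
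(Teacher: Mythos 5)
Your proposal is correct and follows essentially the same route as the paper's proof in Appendix~\ref{app:Wnl_Anl}: apply $E$ through the iterated coproduct, discard the terms hitting the leading $u_0$'s and the boundary terms $u_{-1}=0$ and $E^{\ell}u_{\vec{\varepsilon}}=0$, evaluate the $k^2$-scalar $s^{n-j+1}q^{2(m-\ell)}$ on $E^{m-1}u_{\vec{\varepsilon}}\in\VH_{n-j+1,\ell-m}$, and check that the $b_{\vec{\varepsilon},m}$ satisfy exactly the two-term recursion that makes each coefficient vanish, with the formal conclusion then following from Lemmas~\ref{lem:E_iso} and~\ref{lem:Phi_auto}. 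Your recursion $b_{\vec{\varepsilon},m+1}=-s^{j-n-1}q^{2(\ell-m-1)}b_{\vec{\varepsilon},m}$ and the identification of where $\ell<r$ enters (existence of $E^{-1}u_{\vec{\varepsilon}}$) both match the paper.
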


The proof is analogous to the one of \cite[Lemma 10]{JK} and can be found in Appendix~\ref{app:Wnl_Anl}. Finally, we state the main Theorem of this section.

\begin{thm} \label{thm:Wnl_exist}
Let $n, r \in \Npp$ and $\ell \in \N$ such that $0 \leq \ell < r$. The highest strong weight space $\WH_{n,\ell}$ is a $\C$-vector space of dimension $d_{n,\ell} := \binom{n+\ell-2}{\ell}$ 
and defines a $B_n$-representation given by a homomorphism:
\[
\rho_{n,\ell}^{\WH} \colon B_n \to \mathrm{GL} \left( d_{n,\ell}, \C \right).
\]
Further, the representation is isomorphic to a specialization of the Lawrence representation $\Wb_{n,\ell}$.
\end{thm}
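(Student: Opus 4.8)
The plan is to package the pieces that have already been assembled in this section into the three assertions of the theorem: the dimension formula, the well-definedness of the $B_n$-representation, and the identification with a specialization of the Lawrence representation $\Wb_{n,\ell}$.

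\textbf{Dimension.} First I would observe that the dimension count is already done: by Lemma~\ref{lem:Wnl_Anl} the map $\Phi$ restricts to a $\C$-linear isomorphism $\AH_{n,\ell}\xrightarrow{\cong}\WH_{n,\ell}$ for $1\leq\ell<r$, so $\dim\WH_{n,\ell}=\dim\AH_{n,\ell}=\binom{n+\ell-2}{n-2}=\binom{n+\ell-2}{\ell}=d_{n,\ell}$ by \eqref{eq:dim_Anl}. The edge case $\ell=0$ is handled separately: $\WH_{n,0}=\VH_{n,0}$ is spanned by $u_0^{\otimes n}$, which has dimension $1=d_{n,0}$, consistent with the formula.

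\textbf{The $B_n$-action.} Next I would note that $\WH_{n,\ell}=\ker(E)\cap\VH_{n,\ell}$ is a $B_n$-subrepresentation of $\Vs_{r-1}^{\otimes n}$ because the operators $\sigma_i$ of \eqref{eq:sigma_i} commute with the $D$-action (hence preserve $\ker E$) and preserve strong weight (hence preserve $\VH_{n,\ell}$, as already remarked after Definition~\ref{def:Vnl}). Choosing the basis $\Phi(\mathcal{A}_{n,\ell})$ of $\WH_{n,\ell}$ then gives the homomorphism $\rho^{\WH}_{n,\ell}\colon B_n\to\mathrm{GL}(d_{n,\ell},\C)$; one should record that the matrix entries lie in the appropriate cyclotomic ring, since the only scalars entering are powers of $q$ and $s=q^{r-1}$ via \eqref{eq:Rmatrix_V} and the coefficients $b_{\vec\varepsilon,m}$ of $\Phi$.

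\textbf{Identification with $\Wb_{n,\ell}$.} This is the main point, and the argument is the transport of Jackson--Kerler's construction across Remark~\ref{rem:Rmatrix_formula}. In \cite{JK} the space $\Wb_{n,\ell}$ is constructed in exactly the same way --- as $\ker E\cap \Vb_{n,\ell}$ inside the $n$th tensor power of the generic Verma module, with basis indexed by the same combinatorial set $\mathcal{A}_{n,\ell}$ via the same map $\Phi$ --- and by Remark~\ref{rem:Rmatrix_formula}, for $\ell<r$ the formula \eqref{eq:Rmatrix_V} for $\Rs$ on $\VH_{n,\ell}$ is literally obtained from \cite[Eq.~22]{JK} by the substitution $q=e^{\pi\im/r}$, $s=q^{r-1}$; moreover for $\ell<r$ one has $\dim\VH_{n,\ell}=\dim\Vb_{n,\ell}$ (Lemma~\ref{lem:dim_Vnl} and the discussion preceding it), so no basis vectors are lost in passing to the root of unity. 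Hence the matrices of $\rho^{\WH}_{n,\ell}(\sigma_i)$ in the basis $\Phi(\mathcal A_{n,\ell})$ are obtained from those of the corresponding generators of $\Wb_{n,\ell}$ by the same specialization of variables; combined with the isomorphism $\Wb_{n,\ell}\cong$ Lawrence representation established by Ito \cite{Ito2} (via Kohno's theorem \cite{Koh}), this proves that $\WH_{n,\ell}$ is isomorphic to the claimed specialization of the Lawrence representation.

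\textbf{Main obstacle.} The one place requiring care is the bookkeeping in the last step: one must check that the change-of-basis map $\Phi$ and the explicit $B_n$-action really do commute with specialization, i.e. that no coefficient appearing in \cite{JK} has a pole or becomes degenerate when $q$ is set to a $2r$th root of unity --- this is exactly where the hypothesis $\ell<r$ is used, since it guarantees via Lemma~\ref{lem:E_iso} that $E\vert_{\BH_{n,\ell}}$ stays an isomorphism (so $E^{-1}$ in the definition of $\Phi$ is still well-defined) and via Lemma~\ref{lem:Phi_auto} that $\Phi$ remains an automorphism. Given those lemmas, the argument is a direct comparison of formulas and no genuinely new computation is needed.
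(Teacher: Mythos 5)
Your proposal is correct and follows essentially the same route as the paper's own proof: the dimension via Lemma~\ref{lem:Wnl_Anl} (with $\ell=0$ treated separately), the $B_n$-invariance from the intertwining property of $\Rs$, and the identification with the specialized Lawrence representation via Remark~\ref{rem:Rmatrix_formula} together with Ito's theorem. Your additional bookkeeping remarks about the specialization being pole-free are a sensible elaboration of what the paper leaves implicit, but they do not change the argument.
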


\begin{proof}
For $\ell=0$ we obtain clearly the trivial $B_n$-representation since $\dim \WH_{n,0} = \dim \VH_{n,0} = 1$. Suppose that $1 \leq \ell < r$.
By Lemma~\ref{lem:Wnl_Anl} $\WH_{n,\ell}$ is a $\C$-vector space of dimension $d_{n,\ell}$. Due to Remark~\ref{rem:Rmatrix_formula} the calculations remain the same as in \cite{JK}, with the difference that the variables $s$ and $q$ are specialized to the complex values $q=e^{\pi \im /r}$ and $s=q^{r-1}$. Hence, we obtain a specialization of the corresponding Lawrence representation, since the Lawrence representations are isomorphic to the representations $\Wb_{n,\ell}$ \cite{Ito2}.
\end{proof}

\subsection{Structure of the representations \texorpdfstring{$\WH_{n,\ell}$}{WH}}
The goal of this subsection is to represent the vectors of $\WH_{n,\ell}$, for $\ell < r$, by vectors in the projective modules in the direct sum decomposition of $\Vs_{r-1}^{\otimes n}$.

The vectors of $\WH_{n,\ell}$ are in $\ker E$. Therefore, by the structure of the projective modules of $D$, the vectors of $\WH_{n,\ell}$ are represented by
\begin{enumerate}[label=(\roman{*})]
\item \label{item:C} highest weight vectors $u_0^\alpha$ of a simple module $\Vmd_{r-1}$, for some $\alpha \in \{ \pm 1, \pm \im \}$;
\item \label{item:S} socle vectors $u_0^\alpha \otimes w_0^S$ in a projective module $\Pmd_i$, for some $0 \leq i < r-1$ and some $\alpha \in \{ \pm 1, \pm \im \}$;
\item \label{item:R} and finally, right-most highest weight vectors $u_0^{- \im \alpha} \otimes w_0^R \in \Ps_j^{- \im \alpha}$, where $j=r-2-i$.
\end{enumerate}

We define the vector subspaces $\CH_{n,\ell}$, $\SH_{n,\ell}$ and $\RH_{n,\ell}$ as the spaces spanned by vectors represented as in \ref{item:C}, \ref{item:S} and \ref{item:R} respectively. Therefore, there exists a decomposition into vector spaces
\[
\WH_{n,\ell} = \CH_{n,\ell} \oplus \SH_{n,\ell} \oplus \RH_{n,\ell}.
\]

First, we split the $B_n$-module $\WH_{n,\ell}$ as the direct sum of $B_n$-modules $\CH_{n,\ell}$ and $\SH_{n,\ell} \oplus \RH_{n,\ell}$. By \eqref{eq:action_Va} we have $E^{r-1} F^{r-1} u_0^\alpha = \alpha^{2r} (\qint{r-1}!)^2 u_0^\alpha$. Further, by \eqref{eq:proj_rels1} and \eqref{eq:proj_rels2} we have $u_0^\alpha \otimes w_0^S, u_0^{- \im \alpha} \otimes w_0^R \in \ker (E^{r-1} F^{r-1})$. Since the $B_n$-action intertwines the quantum group action, there exists a split short exact sequence of $B_n$-modules
\[
\begin{tikzcd}
0 \arrow[r] & \SH_{n,\ell} \oplus \RH_{n,\ell} \arrow[r, hook] & \WH_{n,\ell} \arrow[r, "E^{r-1} F^{r-1}"] & \CH_{n,\ell} \arrow[r] & 0,
\end{tikzcd}
\]
where the section for $E^{r-1} F^{r-1}$ is the inclusion $\CH_{n,\ell} \hookrightarrow \WH_{n,\ell}$ composed by the multiplication by $\alpha^{2r} (\qint{r-1}!)^{-2}$.

Note that, the direct sum of vector spaces $\SH_{n,\ell} \oplus \RH_{n,\ell}$ is not necessarily a direct sum of $B_n$-modules. In $D$-mod there are non trivial maps between the projective modules $\Pmd_i$ and $\Ps_j^{- \im \alpha}$ \cite{X2} (see also \cite{CGP3}).
Later in this section we give an explicit example (Lemma~\ref{lem:RS_non_split}).

\bigbreak

Let $w \in \WH_{n,\ell}$. Recall that $w$ has strong weight $n(r-1) - 2\ell$.
By writing $\alpha = q^{mr/2}$, for some $m \in \Z$, we have by \eqref{eq:proj_rels2} that
\[
\begin{array}{rcll}
k w &=& q^{(mr+r-1)/2} \, w, &\text{ for } w \in \CH_{n,\ell},
\\
k w &=& q^{(mr+i)/2} \, w, &\text{ for } w \in \SH_{n,\ell} \oplus \RH_{n,\ell} \text{ and where } 0 \leq i < r-1.
\end{array}
\]
Therefore, $w$ has strong weight $4 \kappa r + mr+i$, for some $\kappa \in \Z$ such that $-2r \leq 4 \kappa r + mr+i < 2r$, with $i=r-1$ if $w \in \CH_{n,\ell}$ and $0 \leq i < r-1$ otherwise.
Hence
\begin{equation}\label{eq:struct0}
\begin{aligned}
\CH_{n,\ell} \neq \{ 0 \} &\Leftrightarrow r-1 \equiv n + 2(\ell-1)\mod r,
\\
\SH_{n,\ell} \oplus \RH_{n,\ell} \neq \{ 0 \} &\Leftrightarrow j \equiv n + 2(\ell-1) \mod r, \text{ for some } 0 \leq j < r-1.
\end{aligned}
\end{equation}
Recall that the integer $j$ corresponds to the projective module $\Ps_j^{-\im \alpha}$ containing the vectors of $\RH_{n, \ell}$.
It remains to investigate under which conditions $w \in \SH_{n, \ell}$ and/or $w \in \RH_{n,\ell}$. We prove in this section the following theorem.

\begin{thm} \label{thm:Wnl}
Let $n, r \in \Npp$ and $\ell \in \N$ such that $\ell < r$. Let $0 \leq j \leq r-1$ such that $j \equiv n + 2(\ell-1) \mod r$. Then we have the following.
\begin{enumerate}
\item If $j=r-1$, then $\WH_{n,\ell} = \CH_{n,\ell}$.
\item If $j \geq \ell$, then $\WH_{n,\ell} = \RH_{n,\ell}$.
\item If $j < \ell$ and $n \geq 3$, then $\WH_{n,\ell} = \SH_{n,\ell} \oplus \RH_{n,\ell}$. Moreover, $\WH_{n,\ell}$ is not simple with $\SH_{n,\ell}$ being a subrepresentation isomorphic to $\WH_{n,\ell-j-1}$.
\item If $j < \ell$ and $n=2$, then $\WH_{2,\ell} = \SH_{2,\ell}$.
\end{enumerate}
\end{thm}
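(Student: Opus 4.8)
The plan is to exploit the weight bookkeeping already set up in \eqref{eq:struct0} together with the $E$-parametrization of $\WH_{n,\ell}$ by $\AH_{n,\ell}$ from Lemma~\ref{lem:Wnl_Anl}. Fix $j$ with $0\le j\le r-1$ and $j\equiv n+2(\ell-1)\bmod r$; by \eqref{eq:struct0} this $j$ is exactly the label of the projective summand $\Ps_j^{-\im\alpha}$ whose right-most head vector can occur in $\RH_{n,\ell}$, and $\CH_{n,\ell}\ne\{0\}$ forces $j=r-1$. Part (1) is then immediate: if $j=r-1$ then the second line of \eqref{eq:struct0} has no solution with $0\le j<r-1$, so $\SH_{n,\ell}=\RH_{n,\ell}=\{0\}$ and $\WH_{n,\ell}=\CH_{n,\ell}$.

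For parts (2)--(4), I would compare $\ell$ with $j$ by looking, for each basis vector $a_{\vec\varepsilon}\in\mathcal A_{n,\ell}$, at the $D$-submodule generated by $\Phi(a_{\vec\varepsilon})\in\WH_{n,\ell}$ inside the projective module of $\Vs_{r-1}^{\otimes n}$ containing it. The vector $\Phi(a_{\vec\varepsilon})$ lies in $\ker E$, so it is one of the three types \ref{item:C}, \ref{item:S}, \ref{item:R}; which type it is is governed by whether it is \emph{dominant} (in $\ker(FE)^2$) and by how many times $F$ can be applied before hitting zero, i.e. by the length of the string it heads. A right-most head vector $u_0^{-\im\alpha}\otimes w_0^R\in\Ps_j^{-\im\alpha}$ heads an $F$-string of length $j+1$ (by \eqref{eq:proj_rels1}--\eqref{eq:proj_rels2}), whereas the socle vector $u_0^\alpha\otimes w_0^S\in\Ps_i$ heads an $F$-string of length $i+1$ with $i=r-2-j$, and a highest weight vector of $\Vs_{r-1}^\alpha$ heads one of length $r$. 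On the other hand, because every tensor factor index in a vector of $\VH_{n,\ell}$ is $\le\ell<r$ (Remark~\ref{rem:Rmatrix_formula}), applying $F^{m}$ to a vector of strong weight $n(r-1)-2\ell$ stays nonzero only for $m$ up to roughly the "room" available, which is controlled by $\ell$. Carrying this out: if $j\ge\ell$ then every $\Phi(a_{\vec\varepsilon})$ still admits $F^j$ acting nontrivially but never becomes dominant, so it must be of type \ref{item:R}; this gives $\WH_{n,\ell}=\RH_{n,\ell}$ and proves (2). If $j<\ell$, the vectors split into those that do fit inside an $F$-string of length $\le j+1$ off a right-most head vector and those that are forced into the socle, giving the decomposition $\WH_{n,\ell}=\SH_{n,\ell}\oplus\RH_{n,\ell}$ for $n\ge3$; the degenerate count for $n=2$ is that $\AH_{2,\ell}$ is one-dimensional (spanned by $u_1\otimes u_{\ell-1}$), and a direct check shows $\Phi(u_1\otimes u_{\ell-1})$ is a socle vector, hence $\WH_{2,\ell}=\SH_{2,\ell}$, proving (4).

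For the "moreover" in (3), I would identify $\SH_{n,\ell}$ as a $B_n$-subrepresentation isomorphic to $\WH_{n,\ell-j-1}$ via the map $\widetilde x_{\alpha,i}^{-1}$ of \eqref{eq:proj_bij} composed with the quantum-group intertwiner $F^{j+1}$: the operator $E^{i+1}F^{i+1}$ (equivalently, projection onto the socles using that $w_m^S\mapsto v_m^1$) is a $B_n$-morphism that sends $\SH_{n,\ell}$ isomorphically onto the highest strong weight space at strong weight $n(r-1)-2(\ell-j-1)$, because shifting from a socle copy of $\Vs_i$ inside $\Ps_i$ ($i=r-2-j$) to a highest weight vector subtracts $2(j+1)$ from the strong weight. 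Since $\ell-j-1<\ell<r$, Theorem~\ref{thm:Wnl_exist} applies and $\WH_{n,\ell-j-1}$ is exactly the specialized Lawrence representation of that index; that $\SH_{n,\ell}$ is a \emph{proper} subrepresentation (so $\WH_{n,\ell}$ is not simple) follows because $\RH_{n,\ell}\ne\{0\}$ whenever $n\ge3$ and $j<\ell$, which one checks by exhibiting an explicit right-most head vector, e.g. from the summand containing $u_0^{\otimes n-2}\otimes u_1\otimes u_{\ell-1}$ versus one containing a part $>1$. The main obstacle I expect is the bookkeeping in the middle step: verifying precisely, from the formulas \eqref{eq:proj_rels1}--\eqref{eq:proj_rels2} and the explicit $\Phi$ of \eqref{eq:map_phi}, which $\Phi(a_{\vec\varepsilon})$ land in $\RH_{n,\ell}$ and which in $\SH_{n,\ell}$ when $j<\ell$ — i.e. pinning down the $F$-string length of each $\Phi(a_{\vec\varepsilon})$ — since this is where the "$n\ge3$ versus $n=2$" dichotomy really comes from and requires a careful induction on the shape of $\vec\varepsilon$ mirroring the proof of Lemma~\ref{lem:E_iso}.
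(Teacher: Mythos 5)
Your overall strategy---reduce everything to the strong-weight bookkeeping of \eqref{eq:struct0} and then decide, for each highest weight vector, which of the three types \ref{item:C}, \ref{item:S}, \ref{item:R} it has---is the same as the paper's, and parts (1) and (2) come out correctly: your ``$F$-string length'' criterion is exactly the paper's Lemma~\ref{lem:struct1}, namely that a socle vector of $\Pmd_i$ at level $\ell$ is $F^{j+1}$ of a highest weight vector at level $\ell-j-1$, so $\SH_{n,\ell}\neq\{0\}$ forces $j<\ell$. (One caveat: dominance cannot be part of the criterion, since every vector of $\WH_{n,\ell}\subset\ker E$ is automatically in $\ker(FE)^2$; only the string length distinguishes the types.) However, there are two genuine gaps in the part that actually separates (3) from (4).

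First, the map you propose for the isomorphism $\SH_{n,\ell}\cong\WH_{n,\ell-j-1}$ does not work as written. The operator $E^{i+1}F^{i+1}$ preserves the strong weight, so it cannot land in the level $n(r-1)-2(\ell-j-1)$; moreover $F^{i+1}$ kills $w_0^S$ (since $F^i w_0^S = w_i^S$ and $F\,w_i^S=0$) and $E$ kills $\SH_{n,\ell}\subset\ker E$ outright, so any composite of this shape vanishes on $\SH_{n,\ell}$. The intertwiner has to go the other way: the paper first solves the modular condition for $\ell':=\ell-1-j$ to get $j'=r-2-j\geq\ell'$, so that $\WH_{n,\ell'}=\RH_{n,\ell'}$ by part (2), and then uses $F^{j+1}\colon\WH_{n,\ell'}\to\SH_{n,\ell}$, which is surjective by the structure of $\Ps_i$ and injective by Proposition~\ref{prop:F_inj}; being a power of $F$ it commutes with the $B_n$-action, giving the claimed subrepresentation. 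Second, the step you yourself flag as ``the main obstacle''---proving $\RH_{n,\ell}\neq\{0\}$ for $n\geq 3$ while $\RH_{2,\ell}=\{0\}$---is left unexecuted, and the explicit-vector/induction route you sketch is unnecessary. Once $\SH_{n,\ell}\cong\WH_{n,\ell'}$ is in hand, Theorem~\ref{thm:Wnl_exist} gives $\dim\SH_{n,\ell}=\binom{n+\ell'-2}{\ell'}$, which is strictly smaller than $\dim\WH_{n,\ell}=\binom{n+\ell-2}{\ell}$ exactly when $n\geq 3$, and equals $1=\dim\WH_{2,\ell}$ when $n=2$. This single dimension count yields both (3) and (4) with no case analysis on the shape of $\vec{\varepsilon}$.
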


Equation~\ref{eq:struct0} together with Lemma~\ref{lem:struct1} and Proposition~\ref{prop:struct3}, which are proved later in this section, imply Theorem~\ref{thm:Wnl}.

First we note that if $\ell - j - 1 < 0$, we have $\WH_{n,\ell-j-1} = \{ 0 \}$ and therefore $\SH_{n,\ell} = \{ 0 \}$. Hence: 

\begin{lemma} \label{lem:struct1}
Let $n, r \in \Npp$, $\ell \in \N$ and $0 \leq j \leq r-1$ such that $j \equiv n  + 2(\ell-1) \mod r$.
It holds that
\[
\SH_{n,\ell} \neq \{ 0 \} \Leftrightarrow j < \ell.
\]
Consequently, $\WH_{n,\ell} = \RH_{n,\ell}$, if $\ell \leq j < r - 1$.
\end{lemma}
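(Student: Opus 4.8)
The plan is to pin down exactly when the space $\SH_{n,\ell}$ — spanned by vectors represented as socle vectors $u_0^\alpha\otimes w_0^S$ in a projective module $\Pmd_i$ — is nonzero, using the structural identification already established in Theorem~\ref{thm:Wnl} (items \ref{item:S} and \ref{item:R}). Recall from Figure~\ref{fig:proj_D} and the relations \eqref{eq:proj_rels1}–\eqref{eq:proj_rels2} that a socle vector $u_0^\alpha\otimes w_0^S$ lives in $\Pmd_i$, whose socle is $\Vmd_i$ and whose radical contains the copies $\Vs_j^{\pm\im\alpha}$ with $j=r-2-i$. The first step is to observe that the socle vectors counted by $\SH_{n,\ell}$ and the right-most highest weight vectors $u_0^{-\im\alpha}\otimes w_0^R\in\Ps_j^{-\im\alpha}$ counted by $\RH_{n,\ell}$ sit inside \emph{the same} collection of projective summands of $\Vs_{r-1}^{\otimes n}$: indeed, by \eqref{eq:proj_rels2} we have $w_0^S=FE\,w_j^R$ up to a nonzero scalar (trace through $w_0^S=F\,w_j^R$... more precisely $w_j^R=E\,w_0^H$ and $w_0^S=F\,w_j^R$ realize $w_0^S$ as $FE$ applied to the dominant generator after passing through $w_j^R$), so every socle vector of $\Pmd_i$ is obtained from the corresponding $w_0^R$-vector by applying the quantum-group element $FE$. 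Hence $\SH_{n,\ell}\neq\{0\}$ forces the ambient projective $\Pmd_i$ (with the prescribed $i$) to actually occur as a summand, and conversely.

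Next I would translate the occurrence condition into the stated congruence-and-inequality. Equation~\eqref{eq:struct0} already tells us that the nonzero weight subspace in play corresponds to a $\Ps_j^{-\im\alpha}$ with $j\equiv n+2(\ell-1)\bmod r$, $0\le j<r-1$, and that the $w_0^R$-vectors span $\RH_{n,\ell}$. The module $\Ps_j^{-\im\alpha}$ has its two $w^R$-"arms" of length $j+1$ and its $w^S$-socle-arm of length $i+1=r-1-j$. A socle vector $w_0^S$ arises inside this summand precisely when the highest strong weight space $\WH_{n,\ell}$ "reaches into" the socle layer, which happens exactly when the remaining weight budget $\ell$ exceeds the $w^R$-arm length $j$; concretely, applying Theorem~\ref{thm:Wnl}(3) the socle part $\SH_{n,\ell}$ is identified with $\WH_{n,\ell-j-1}$, and this space is nonzero iff $\ell-j-1\ge 0$, i.e. iff $j<\ell$. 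For the converse direction, if $j\ge\ell$ there is simply no room: the only highest-weight vectors of the right strong weight available in the summands are the $w_0^R$'s, so $\WH_{n,\ell}=\RH_{n,\ell}$ and $\SH_{n,\ell}=\{0\}$. Combining $j<\ell\Rightarrow\SH_{n,\ell}\neq\{0\}$ with $j\ge\ell\Rightarrow\SH_{n,\ell}=\{0\}$ gives the equivalence; and the final consequence $\WH_{n,\ell}=\RH_{n,\ell}$ for $\ell\le j<r-1$ is immediate, since in that range $\CH_{n,\ell}=\{0\}$ by \eqref{eq:struct0} (as $j\ne r-1$) and $\SH_{n,\ell}=\{0\}$ by what we just showed, leaving $\WH_{n,\ell}=\SH_{n,\ell}\oplus\RH_{n,\ell}=\RH_{n,\ell}$.

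The main obstacle I anticipate is making precise the claim that "the socle part of $\WH_{n,\ell}$ is $\WH_{n,\ell-j-1}$", i.e. the dimension count that underlies $\SH_{n,\ell}\neq\{0\}\iff\ell-j-1\ge 0$; this is the substance of Theorem~\ref{thm:Wnl}(3) and is deferred to Proposition~\ref{prop:struct3} later in the section. For the purposes of \emph{this} lemma, though, I only need the easy half-statements: (i) if $\ell-j-1<0$ then $\WH_{n,\ell-j-1}=\{0\}$, hence no socle vectors of the appropriate strong weight can appear, so $\SH_{n,\ell}=\{0\}$ — which is exactly the observation recorded just before the lemma statement; and (ii) if $j<\ell$, exhibiting a single explicit nonzero socle vector suffices, which one gets by applying $\Phi$ (Lemma~\ref{lem:Phi_auto}, Lemma~\ref{lem:Wnl_Anl}) to an appropriate $a_{\vec\varepsilon}\in\AH_{n,\ell}$ chosen so that the resulting vector in $\ker E$ has nonzero image under $F E$ (and hence, by \eqref{eq:xmi}, is not annihilated by the nilpotent $x_{\alpha,i}$, i.e. genuinely lies in the socle rather than being a $w_0^R$-type vector). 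So the lemma reduces to these two short verifications plus the bookkeeping with \eqref{eq:struct0}; the hard dimension-matching is quarantined inside Proposition~\ref{prop:struct3}.
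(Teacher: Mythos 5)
Your argument is correct and matches the paper's: the paper disposes of this lemma with precisely your observation (i) --- a socle vector in $\VH_{n,\ell}$ would force $\WH_{n,\ell-j-1}\neq\{0\}$, which is impossible when $j\geq\ell$ --- and, like you, it leaves the converse implication $j<\ell\Rightarrow\SH_{n,\ell}\neq\{0\}$ to be supplied by Proposition~\ref{prop:struct3}. One small correction to your bookkeeping: $FE\,w_j^R$ is proportional to $w_j^R$, not to $w_0^S$, and $FE\,w_0^R=0$; the relation you want is $w_0^S=FE\,w_0^H$ (equivalently $w_0^S\propto F^{j+1}w_0^R$), which is what your argument actually uses.
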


We now state the following result, which we use to complete the proof of Theorem~\ref{thm:Wnl}.

\begin{prop}\label{prop:F_inj}
Let $n, r \in \Npp$ and $\ell \in \N$ such that $0 \leq \ell < r - 1$. The map $F \colon \VH_{n,\ell} \to \VH_{n,\ell+1}$ is injective.
\end{prop}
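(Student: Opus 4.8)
The plan is to prove injectivity of $F \colon \VH_{n,\ell} \to \VH_{n,\ell+1}$ for $0 \leq \ell < r-1$ by exhibiting, on each basis vector $u_{\vec\varepsilon} = u_{\varepsilon_1} \otimes \cdots \otimes u_{\varepsilon_n}$ with $\varepsilon_1 + \cdots + \varepsilon_n = \ell$, a distinguished ``leading'' term in $F\,u_{\vec\varepsilon}$ that cannot be cancelled by any other term. Recall that by the coproduct $\Delta(F) = k^{-2} \otimes F + F \otimes 1$ iterated, $F$ acts on $u_{\vec\varepsilon}$ as a weighted sum $\sum_{t=1}^n (\text{scalar}_t)\, u_{\varepsilon_1} \otimes \cdots \otimes u_{\varepsilon_t + 1} \otimes \cdots \otimes u_{\varepsilon_n}$, where the scalar in front of the $t$-th term is $\qint{\varepsilon_t+1}\qint{r-1-\varepsilon_t}$ times a power of $k$ coming from the factors $k^{-2}$ acting on the tensor legs to the left of position $t$ (legs $1,\dots,t-1$); crucially, since $\varepsilon_t < \ell < r-1$, the quantum integers $\qint{\varepsilon_t+1}$ and $\qint{r-1-\varepsilon_t}$ are both nonzero, so every one of the $n$ terms genuinely appears with a nonzero coefficient. (Here I must be slightly careful: if $\varepsilon_t = r-1$ the term vanishes, but this never occurs because $\varepsilon_t \leq \ell < r-1$.)

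Next I would set up the cancellation argument. Order the basis vectors of $\VH_{n,\ell}$ lexicographically by their index tuple $(\varepsilon_1,\dots,\varepsilon_n)$. Given a nonzero element $x = \sum_{\vec\varepsilon} c_{\vec\varepsilon}\, u_{\vec\varepsilon} \in \VH_{n,\ell}$, let $\vec\delta = (\delta_1,\dots,\delta_n)$ be the lexicographically largest tuple with $c_{\vec\delta} \neq 0$. I claim that the term obtained from $u_{\vec\delta}$ by increasing its \emph{last} nonzero entry — say entry in position $p$, giving the tuple $\vec\delta^{+} = (\delta_1,\dots,\delta_{p-1},\delta_p+1,\delta_{p+1},\dots,\delta_n)$ with all $\delta_{p+1} = \cdots = \delta_n = 0$ — survives in $F x$. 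For this I must check that $u_{\vec\delta^+}$ cannot be produced by applying $F$ to any other basis vector $u_{\vec\varepsilon}$ appearing in $x$. Any tuple $\vec\varepsilon$ that can map to $\vec\delta^+$ under $F$ is obtained from $\vec\delta^+$ by decreasing one coordinate by $1$; the possibilities are: (a) decrease position $p$, recovering $\vec\delta$ itself; or (b) decrease some position $q \neq p$ with $(\vec\delta^+)_q \geq 1$. In case (b), since $\delta_{p+1} = \cdots = \delta_n = 0$, we need $q < p$, and then the resulting tuple $\vec\varepsilon$ agrees with $\vec\delta^+$ (hence with $\vec\delta$) in positions $q+1,\dots$ except that $(\vec\varepsilon)_q = \delta_q - 1 < \delta_q$ while all later coordinates match $\vec\delta$; comparing lexicographically with $\vec\delta$, this $\vec\varepsilon$ is strictly \emph{smaller} than $\vec\delta$, so it cannot appear in $x$ by maximality of $\vec\delta$ — wait, I need to double-check the direction: decreasing an \emph{earlier} coordinate makes the tuple lexicographically smaller, so indeed such $\vec\varepsilon$ would be $< \vec\delta$ and contributes zero. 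Hence the coefficient of $u_{\vec\delta^+}$ in $Fx$ is exactly $c_{\vec\delta}$ times the nonzero scalar attached to raising position $p$ of $u_{\vec\delta}$, which is nonzero. Therefore $Fx \neq 0$.

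The one genuinely delicate point — and the place I expect most of the care to go — is the bookkeeping of the $k^{-2}$-factors: when $F$ acts on the $t$-th leg, the legs to its left get hit by $k^{-2}$, contributing a scalar $\prod_{s<t} (\text{weight of }u_{\varepsilon_s})^{-2}$, which is a root of unity, hence nonzero, so it never causes an unexpected cancellation of the leading term with \emph{itself}, but one should confirm it also does not conspire to cancel the leading term against a different contribution mapping to the same $u_{\vec\delta^+}$; the lexicographic argument above already rules out any other contribution, so the scalars only need to be checked to be nonzero, which they are. A cleaner alternative, which I might prefer to write up, is to invoke Remark~\ref{rem:V_basis_F}: in the basis $\{v_m^\alpha\}$ the action of $F$ is simply $F v_m^\alpha = v_{m+1}^\alpha$ (no quantum-integer coefficient), so on $\Vs_{r-1}^{\otimes n}$ the operator $F$ in the $v$-basis is a sum of ``raise one coordinate'' operators with coefficients that are pure powers of $q$ (from the $k^{-2}$ legs), and the same leading-term argument applies verbatim with all nonzero-coefficient issues trivialized; one then transports back via the explicit change of basis $v_m = (-1)^{r-1}\qint{m}!\qint{r-m}!\,u_m$, which is an isomorphism since all the factorials $\qint{m}!$ for $m \leq r-1$ are nonzero. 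Either way, injectivity follows. \qedhere
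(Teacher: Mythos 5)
Your argument is correct, but it is a genuinely different route from the paper's. The paper proves this proposition representation-theoretically: it decomposes $\Vs_{r-1}^{\otimes n}$ into indecomposable projectives, observes that any vector killed by $F$ must be represented by bottom-of-string vectors (the left-most socle vectors $u_0^\alpha\otimes w_i^S$, the vectors $u_0^\alpha\otimes w_j^L$, or lowest weight vectors of $\Vmd_{r-1}$), and notes that each of these forces a generating vector to lie in a strong weight space $\VH_{n,\ell-(r-1)}$ or lower, which vanishes when $\ell<r-1$. You instead give a direct combinatorial leading-term argument on the monomial basis: using the iterated coproduct $\Delta(F)=k^{-2}\otimes F+F\otimes 1$, you raise the last nonzero coordinate of the lexicographically maximal tuple $\vec\delta$ occurring in $x$ and check that no other tuple of $x$ can map to $\vec\delta^{+}$, since decreasing an earlier coordinate of $\vec\delta^{+}$ produces something lexicographically smaller than $\vec\delta$; the relevant coefficients $\qint{\delta_p+1}\qint{r-1-\delta_p}$ and the $k^{-2}$-eigenvalues are nonzero precisely because $\delta_p\le\ell\le r-2$. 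This is sound (any choice of position $p$ would in fact work, since decreasing a later coordinate produces something lexicographically larger than $\vec\delta$). Your approach is more elementary and self-contained; the paper's is shorter given the projective-module machinery already in place and makes visible where injectivity first fails at $\ell=r-1$. Two small points to tidy up: the case $\ell=0$ has no ``last nonzero entry'' (handle it by taking $p=n$, or directly, since $\VH_{n,0}$ is one-dimensional); and in your alternative route via Remark~\ref{rem:V_basis_F}, the quoted change-of-basis coefficient $\qint{m}!\qint{r-m}!$ literally vanishes at $m=0$ (an artifact of the formula as printed) --- the rescaled basis with $F v_m = v_{m+1}$ does exist with all nonzero coefficients, but you should derive it from the recursion $c_{m+1}=\qint{m+1}\qint{r-1-m}c_m$ rather than cite that closed form. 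Neither point affects your main argument.
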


\begin{proof}
Let $0 \neq v \in \ker F \subset \VH_{n,\ell}$. Then $v$ can be represented as a linear combination of the following vectors in some of the projective modules in the direct sum decomposition of $\Vs_{r-1}^{\otimes n}$:
\begin{itemize}
\item the left-most socle vectors $u_0^\alpha \otimes w_i^S$ in a projective module $\Pmd_i$. Then, $u_0^\alpha \otimes w_0^R \in \VH_{n,\ell-(i+1+j)}=\VH_{n,\ell-(r-1)}$, where $j=r-2-i$. But, since $\ell-(r-1) < 0$, we have that $\VH_{n,\ell-(r-1)} = \{ 0 \}$, which is a contradiction.
\item the left-most vectors $u_0^\alpha \otimes w_j^L$ in a projective module $\Pmd_i$. Then $u_0^\alpha \otimes w_0^R \in \VH_{n,\ell-(2j+i+2)}= \VH_{n,\ell-(2r-2-i)}$, where $j=r-2-i$. Note that, it holds $0 \leq i \leq r-2$, so $r \leq 2r-2-i \leq 2r-2$. But, since $\ell < r - 1$, we have that $\ell-(2r-2-i) < 0$ and consequently $\VH_{n,\ell-(2r-2-i)} = \{ 0 \}$, which is a contradiction.
\item lowest weight vectors in some simple module $\Vmd_{r-1}$. Then, the highest weight vector of this simple module belongs in $\VH_{n,\ell-(r-1)}$. But $\VH_{n,\ell-(r-1)} = \{ 0 \}$, since $\ell < r-1$.
\end{itemize}
Therefore, $\ker F = \{ 0 \}$.
\end{proof}
 
Finally we show that the subspace $\SH_{n,\ell}$ is actually a subrepresentation and that $\WH_{n,\ell} \neq \{ 0 \}$ implies $\RH_{n,\ell} \neq \{ 0 \}$, if $n \geq 3$.

\begin{prop} \label{prop:struct3}
Let $n, r \in \Npp$ and $\ell \in \N$ such that $\ell < r$ and suppose that there exists $0 \leq j < \ell$ such that $j \equiv n + 2(\ell-1) \mod r$. Set $\ell':=\ell-1-j$.
Then we have the following.
\begin{enumerate}
\item The subspace $\SH_{n,\ell} \subset \WH_{n,\ell}$ is a $B_n$-submodule of $\WH_{n,\ell}$ isomorphic to $\WH_{n,\ell'}$;
\item for $n \geq 3$, $\RH_{n,\ell} \neq \{ 0 \}$.
\end{enumerate}
\end{prop}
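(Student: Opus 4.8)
The plan is to establish part~(1) by exhibiting a concrete $B_n$-equivariant isomorphism, and then to deduce part~(2) from a dimension count.

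For (1), I would first show $\WH_{n,\ell'}=\RH_{n,\ell'}$. Since $\ell'=\ell-1-j$, we have $n+2(\ell'-1)=\big(n+2(\ell-1)\big)-2(j+1)\equiv -j-2\equiv r-j-2\pmod{r}$, and the residue $j'':=r-j-2$ lies in $[0,r-2]$. Hence $\CH_{n,\ell'}=\{0\}$ (otherwise $r-1\equiv r-j-2$, forcing $j=r-1$, impossible as $j<\ell<r$), while $\SH_{n,\ell'}=\{0\}$ by Lemma~\ref{lem:struct1} because $j''=r-j-2\ge \ell-j-1=\ell'$ (using $\ell\le r-1$). The decomposition $\WH_{n,\ell'}=\CH_{n,\ell'}\oplus\SH_{n,\ell'}\oplus\RH_{n,\ell'}$ then gives the claim, so $\WH_{n,\ell'}=\RH_{n,\ell'}$ is spanned by the right-most highest weight vectors $w_0^R$ living in the indecomposable projective summands of $\Vs_{r-1}^{\otimes n}$ of type $\Pmd_{r-2-j}$. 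By the strong-weight analysis of \eqref{eq:struct0}, $\SH_{n,\ell}$ is spanned by the socle vectors $w_0^S$ of summands of the same type $\Pmd_{r-2-j}$; and inside $\Pmd_{r-2-j}$ the vectors $w_0^R$ and $w_0^S$ differ in strong weight by exactly $2\big(r-1-(r-2-j)\big)=2(j+1)=2(\ell-\ell')$. Therefore a summand of type $\Pmd_{r-2-j}$ contributes its right-most vector to $\RH_{n,\ell'}$ if and only if it contributes its socle vector to $\SH_{n,\ell}$, so the two spaces $\WH_{n,\ell'}=\RH_{n,\ell'}$ and $\SH_{n,\ell}$ are supported on exactly the same collection of summands.

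Next I would introduce the operator $F^{j+1}\colon\Vs_{r-1}^{\otimes n}\to\Vs_{r-1}^{\otimes n}$, i.e.\ the action of $F^{j+1}\in D$; it is $B_n$-equivariant since each $\sigma_i$ commutes with the $D$-action. Inside a summand $\Pmd_{r-2-j}\cong\Vmd_0\otimes\Ps_{r-2-j}$, relations \eqref{eq:proj_rels1}--\eqref{eq:proj_rels2} give the $F$-chain $w_0^R\mapsto w_1^R\mapsto\cdots\mapsto w_j^R\mapsto w_0^S$ in which every arrow is multiplication by a nonzero scalar (a product of quantum integers $\qint{a}$ with $1\le a\le j<r$), while $\Delta(F)=k^{-2}\otimes F+F\otimes 1$ contributes only a power of $\alpha^{-2}$ from the one-dimensional factor $\Vmd_0$. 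Hence $F^{j+1}$ carries each spanning right-most vector of $\WH_{n,\ell'}=\RH_{n,\ell'}$ to a nonzero multiple of the corresponding socle vector of $\SH_{n,\ell}$, so in particular $F^{j+1}(\WH_{n,\ell'})\subseteq\SH_{n,\ell}\subseteq\WH_{n,\ell}$; since socle vectors attached to distinct summands are linearly independent and the scalars are nonzero, $F^{j+1}$ is injective on $\WH_{n,\ell'}$ with image exactly $\SH_{n,\ell}$. Consequently $\SH_{n,\ell}=F^{j+1}(\WH_{n,\ell'})$ is a $B_n$-submodule of $\WH_{n,\ell}$ and $F^{j+1}$ restricts to a $B_n$-isomorphism $\WH_{n,\ell'}\xrightarrow{\ \sim\ }\SH_{n,\ell}$, establishing (1).

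For (2), note that $n+2(\ell-1)\equiv j\not\equiv r-1\pmod{r}$, since $j<\ell<r$ forces $j\le r-2$; hence $\CH_{n,\ell}=\{0\}$ and $\WH_{n,\ell}=\SH_{n,\ell}\oplus\RH_{n,\ell}$ as $\C$-vector spaces. Using part~(1) and the dimension formula of Theorem~\ref{thm:Wnl_exist},
\[
\dim\RH_{n,\ell}=\dim\WH_{n,\ell}-\dim\WH_{n,\ell'}=\binom{n+\ell-2}{\,n-2\,}-\binom{n+\ell'-2}{\,n-2\,}.
\]
For $n\ge 3$ we have $n-2\ge 1$, and $m\mapsto\binom{m}{n-2}$ is strictly increasing for $m\ge n-2$; since $0\le\ell'<\ell$ gives $n+\ell-2>n+\ell'-2\ge n-2$, the right-hand side is strictly positive, so $\RH_{n,\ell}\ne\{0\}$. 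The main obstacle is part~(1): the essential points are the strong-weight and index bookkeeping identifying the summands $\Pmd_{r-2-j}$ as the common support of $\SH_{n,\ell}$ and $\RH_{n,\ell'}$, and the non-vanishing of the scalars along the chain $w_0^R\to\cdots\to w_0^S$. Once these are in place, the $B_n$-equivariance of $F^{j+1}$ is immediate and (2) is an elementary binomial estimate.
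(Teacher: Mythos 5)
Your proof is correct and follows essentially the same route as the paper's: compute $j'=r-2-j$ to deduce $\WH_{n,\ell'}=\RH_{n,\ell'}$ from Lemma~\ref{lem:struct1}, use the $B_n$-equivariant map $F^{j+1}$ to identify $\WH_{n,\ell'}$ with $\SH_{n,\ell}$, and obtain (2) by the dimension count. The only (harmless) deviation is that you verify injectivity of $F^{j+1}$ directly from the nonzero scalars along the $F$-chain $w_0^R\to\cdots\to w_0^S$ inside each projective summand, where the paper instead invokes Proposition~\ref{prop:F_inj}.
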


\begin{proof}
We prove now the first statement. The map $F^{j+1} \colon \VH_{n,\ell'} \to \VH_{n,\ell}$ is injective by Proposition~\ref{prop:F_inj}. We show the following:
\[
\SH_{n,\ell} =  \Ima F^{j+1}\vert_{\WH_{n,\ell'}} \text{ as vector spaces}.
\]
The implication $\SH_{n,\ell} \subseteq \Ima F^{j+1}\vert_{\WH_{n,\ell'}}$ is clear by the definition of $\SH_{n,\ell}$. For the other direction, we solve for $\WH_{n,\ell'}$ the modular equation $j' \equiv n + 2(\ell'-1) \mod r$ with respect to $j'$:
\begin{align*}
j' \equiv n + 2(\ell'-1) \mod r
&\Leftrightarrow
j' \equiv -2(\ell-1) + j + 2(\ell - 1 - j - 1)  \mod r
\\
&\Leftrightarrow
 j' \equiv  r-2-j \mod r,
\end{align*}
where at the first equivalence we use the modular condition for $\WH_{n,\ell}$. Since $0 \leq j < \ell < r$, we have that $0 \leq r - 2 - j < r-1$ and hence $j' = r-2-j$. Moreover
\[
\ell < r
\Leftrightarrow \ell-1-j \leq r-2-j \Leftrightarrow \ell' \leq j'. 
\]
By Lemma~\ref{lem:struct1} it holds $\WH_{n,\ell'} = \RH_{n,\ell'}$ and therefore, by the structure of the projective modules $\Ps_j^{-\im \alpha}$, we have that $F^{j+1} \RH_{n,\ell'} \subseteq \SH_{n,\ell}$. Moreover, due to the injectivity of $F$ by Prop.~\ref{prop:F_inj} it holds $\dim \SH_{n,\ell} = \dim \WH_{n,\ell'}$. Finally, since the $B_n$-action commutes with the $D$-action, the space $\SH_{n,\ell}$ is isomorphic as a $B_n$-module to $\WH_{n,\ell'}$. 

Suppose now $n \geq 3$ for the second statement. Then $\dim \SH_{n,\ell} = \dim \WH_{n,\ell'} < \dim \WH_{n,\ell}$, so $\RH_{n,\ell}$ has dimension $\dim \WH_{n,\ell} - \dim \WH_{n,\ell'} > 0$.
\end{proof}

As mentioned before, Equation~\ref{eq:struct0}, Lemma~\ref{lem:struct1} and Proposition~\ref{prop:struct3} imply Theorem~\ref{thm:Wnl}.

\begin{rem}
A similar process has been applied to the quantum $\sltwo$ at roots of unity in \cite{Ito3}. The representations $\WH_{n,\ell}$ are isomorphic per definition to the representations $Y_{n,m}^N$ in \cite{Ito3} with the identifications $m=\ell$ and $N=r$. The author also defines by homological means a \textit{truncated} version of the Lawrence representation, which (by the results in \cite{Ito3}) is isomorphic to $\WH_{n,\ell}$. Note that by \cite[Lemma 2.5]{Ito1} we have that $\VH_{n,n(r-1)-m} \cong \VH_{n,m}$ (see Section~\ref{sec:dims} for the equality of dimensions).
\end{rem}

Finally, we prove that the direct sum $\SH_{n,\ell} \oplus \RH_{n,\ell}$ does not necessarily decompose as a direct sum of $B_n$-modules by providing a specific example.

\begin{lemma} \label{lem:RS_non_split}
Let $r=4$. The direct sum $\WH_{3,2} = \SH_{3,2} \oplus \RH_{3,2}$ is not a $B_3$-module decomposition.
\end{lemma}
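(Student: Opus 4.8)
Since $r=4$ we have $q=e^{\pi\im/4}$ (a primitive $8$th root of unity) and $s=q^3$; the relevant quantum integers are $\qint{1}=\qint{3}=1$ and $\qint{2}=\sqrt{2}$, so on $\Vs_3$ one has $Fu_0=u_1$, $Fu_1=2u_2$, $Fu_2=u_3$. The modular condition $j\equiv n+2(\ell-1)\pmod r$ reads $j\equiv 3+2\equiv 1\pmod 4$, hence $j=1$; as $j<\ell=2$ and $n=3\geq 3$ we are in case~(3) of Theorem~\ref{thm:Wnl}, so $\WH_{3,2}=\SH_{3,2}\oplus\RH_{3,2}$ with $\SH_{3,2}\cong\WH_{3,0}$ the trivial $B_3$--module (whence $\dim\SH_{3,2}=1$), and by Theorem~\ref{thm:Wnl_exist} $\dim\WH_{3,2}=d_{3,2}=\binom{3}{2}=3$, so $\dim\RH_{3,2}=2$. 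By Proposition~\ref{prop:struct3}(1) the subspace $\SH_{3,2}$ is already a $B_3$--submodule, so it suffices to prove that $\RH_{3,2}$ is \emph{not} $B_3$--stable; I would obtain this from the sharper assertion that the short exact sequence $0\to\SH_{3,2}\to\WH_{3,2}\to\WH_{3,2}/\SH_{3,2}\to 0$ does not split (a non-split such sequence has no $B_3$--stable complement to $\SH_{3,2}$, so in particular $\RH_{3,2}$ is none). Conceptually the obstruction is the one flagged before the statement: for $\WH_{3,2}$ the summands of $\Vs_3^{\otimes 3}$ that matter are copies of $\Ps_1^{\im}$ (whose socle vectors $w_0^S$ span $\SH_{3,2}$) and of $\Ps_1^{1}$ (whose right-most highest weight vectors $w_0^R$ span $\RH_{3,2}$), and one verifies from $w_0^R=E^{2}w_0^H$ and the relations~\eqref{eq:proj_rels1}--\eqref{eq:proj_rels2} that a nonzero $D$--module map $\phi\colon\Ps_1^1\to\Ps_1^{\im}$ (sending the dominant generator of $\Ps_1^1$ to a multiple of $w_0^L\in\Ps_1^{\im}$) satisfies $\phi(w_0^R)=c\,w_0^S$ with $c\neq 0$. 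Thus an endomorphism of $\Vs_3^{\otimes 3}$ involving such a $\phi$ carries $\RH_{3,2}$ into $\RH_{3,2}\oplus\SH_{3,2}$, and the lemma asserts that the braiding $\Rs$ realizes such a map.

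To make this effective I would compute the $B_3$--action on $\WH_{3,2}$ in coordinates. First, $\VH_{3,2}$ has the six-element monomial basis $u_{\varepsilon_1}\otimes u_{\varepsilon_2}\otimes u_{\varepsilon_3}$ with $\varepsilon_1+\varepsilon_2+\varepsilon_3=2$, and applying the automorphism $\Phi$ of~\eqref{eq:map_phi} to the three elements of $\mathcal A_{3,2}$ gives a basis of $\WH_{3,2}$ (Lemma~\ref{lem:Wnl_Anl}). Second, by Proposition~\ref{prop:struct3}(1) one has $\SH_{3,2}=\Ima\bigl(F^{2}|_{\WH_{3,0}}\bigr)=\C\cdot F^{2}(u_0^{\otimes 3})$, and $F^{2}(u_0^{\otimes 3})$ is written in the monomial basis by expanding the iterated coproduct of $F$; it lies in $\ker E$ (so in $\WH_{3,2}$) because $[E,F]=\qnum{1}^{-1}(k^2-k^{-2})$ acts on $u_0^{\otimes 3}$, resp.\ on $Fu_0^{\otimes 3}$, by $\qint{9}=1$, resp.\ $\qint{7}=-1$, at $r=4$, whence $E\,F^{2}(u_0^{\otimes 3})=F(u_0^{\otimes 3})-F(u_0^{\otimes 3})=0$, and it is nonzero by Proposition~\ref{prop:F_inj}. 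Extending $F^{2}(u_0^{\otimes 3})$ to a basis $\{F^{2}(u_0^{\otimes 3}),v_1,v_2\}$ of $\WH_{3,2}$ (e.g.\ by two of the $\Phi$--basis vectors), I would then use the explicit $R$--matrix formula~\eqref{eq:Rmatrix_V} to compute $\sigma_1$ and $\sigma_2$ as $3\times 3$ matrices in this basis. Since every braid acts as the identity on $\SH_{3,2}$, these matrices are block upper-triangular, $\sigma_i=\left(\begin{smallmatrix}1&\beta_i\\0&A_i\end{smallmatrix}\right)$ with $\beta_i$ a row and $A_i\in\mathrm{GL}(2,\C)$; the extension splits if and only if the $1$--cocycle $i\mapsto\beta_i$ is a coboundary, i.e.\ there is a vector $c$ with $\beta_i^{\mathsf T}=(A_i^{\mathsf T}-\1)\,c$ for $i=1,2$, while $\RH_{3,2}$ (once identified) is $B_3$--stable if and only if $\beta_1=\beta_2=0$. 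I expect the computation to give $\beta_1\neq 0$ already --- equivalently, that $\sigma_1$ sends some $v_j$ to a vector with non-zero coefficient along $F^{2}(u_0^{\otimes 3})$ --- which proves the lemma, and that the two-generator linear system above is inconsistent, giving the non-splitting.

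The only genuine difficulty is arithmetic bookkeeping: assembling $F^{2}(u_0^{\otimes 3})$ and the entries of $\Rs$ on the relevant weight vectors involves a nest of $8$th roots of unity, and one must keep track of which combinations vanish at $q=e^{\pi\im/4}$. No conceptual obstacle is anticipated: once the $3\times 3$ matrices of $\sigma_1,\sigma_2$ on $\WH_{3,2}$ are in hand in a basis extending $\{F^{2}(u_0^{\otimes 3})\}$, both the non-splitting of the extension and the failure of $\RH_{3,2}$ to be a $B_3$--submodule can be read off directly. If one prefers to argue with $\RH_{3,2}$ itself rather than via non-splitting, the extra point is to pin $\RH_{3,2}$ down as the span of the $w_0^R$--vectors of the $\Ps_1^{1}$--summands in a chosen decomposition of $\Vs_3^{\otimes 3}$ into indecomposable projectives; this complement, unlike $\SH_{3,2}=\WH_{3,2}\cap\ker F^{2}$, is not canonical, which is precisely why the braiding is able to move it.
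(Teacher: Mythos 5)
Your plan is essentially the paper's proof: identify $\SH_{3,2}=\C\cdot F^2u_0^{\otimes 3}$ inside the LKB basis $\{w_{1,2},w_{1,3},w_{2,3}\}$, write $\sigma_1,\sigma_2$ as explicit $3\times3$ matrices in a basis containing $F^2u_0^{\otimes 3}$, and show by linear algebra that no $B_3$--stable complement to $\SH_{3,2}$ exists --- the paper parametrizes putative stable complements by $\lambda_1,\lambda_2$ and derives an inconsistent system, which is your cocycle criterion in different clothing, and its computation ($F^2u_0^{\otimes 3}=-(q+q^3)(q^6w_{1,2}+q^3w_{1,3}+w_{2,3})$, last rows of the matrices forcing $\lambda_1=\lambda_2=0$ against a nonzero constant term) confirms exactly the outcome you anticipate but leave unexecuted. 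The only point to watch is your aside that ``$\beta_1\neq0$ already proves the lemma'': that holds only if $v_1,v_2$ are chosen to span $\RH_{3,2}$ itself (which two $\Phi$--basis vectors need not do), but your primary argument via non-splitting of $0\to\SH_{3,2}\to\WH_{3,2}\to\WH_{3,2}/\SH_{3,2}\to0$ correctly covers every complement, $\RH_{3,2}$ included.
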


\begin{proof}
Since $j \equiv n + 2(\ell-1) \equiv 1 \mod r$, by Proposition \ref{prop:struct3} we have $\RH_{3,2} \neq \{ 0 \}$ and $\SH_{3,2} \cong \WH_{3,0}$ as $B_3$-modules, where $\WH_{3,0}$ is the trivial $B_3$-representation.

There exists a basis $\{ w_{i,j} \mid 1 \leq i < j \leq n \}$ of $\WH_{n,2}$ such that the action of $B_n$ on $\WH_{n,2}$ \cite{JK}, where $\{i,i+1\} \cap \{j,k\} = \emptyset$ is given by
\begin{equation}\label{eq:LKB}
\begin{aligned}
\sigma_i w_{j,k} &= w_{j,k},\\
\sigma_i w_{i+1,j} &= s^{-1} \, w_{i,j},\\
\sigma_i w_{j,i+1} &= s^{-1} \, w_{j,i},\\
\sigma_i w_{i,j} &= s^{-1} \, w_{i+1,j} + (1-s^{-2}) \, w_{i,j} - s^{i-j-1}(1-s^{-2})q^2 \, w_{i,i+1},\\
\sigma_i w_{i,i+1} &= s^{-4}q^2 \, w_{i,i+1},\\
\sigma_i w_{j,i} &= s^{-1} \, w_{j,i+1} + (1-s^{-2}) \, w_{j,i} - s^{i-j-1}(1-s^{-2}) \, w_{i,i+1}.
\end{aligned}
\end{equation}
The above relations describe also the $B_n$-action for the 2-variable LKB representation, where $q$ and $s$ are considered as variables.

The matrices of the representation $\WH_{3,2}$ in the basis $\{ w_{1,2}, w_{1,3}, w_{2,3} \}$ are 
\[
\sigma_1 = \begin{pmatrix}
q^6 & q^3 - q & 0\\
0 & 1 - q^2 & q^5\\
0 & q^5 & 0
\end{pmatrix}
\andq
\sigma_2 = \begin{pmatrix}
1-q^2 & q^5 & 0 \\
q^5 & 0 & 0 \\
q^2 -1 & 0 & q^6
\end{pmatrix}
\]
The two matrices have eigenvalues $1$ and $q^6 = - \im$ with multiplicities $1$ and $2$ respectively. Since $\SH_{3,2} \cong \WH_{3,0}$, any vector of $\SH_{3,2}$ is an eigenvector for the eigenvalue $1$ for both matrices. In detail, since $\SH_{3,2} =  \Ima F^2 \vert_{\WH_{3,0}}$ as vector spaces and since $\WH_{3,0} = \spn \{ u_0^{\otimes 3} \}$ we have that $\SH_{3,2} = \spn \{ F^2 u_0^{\otimes 3} \}$, where
\[
F^2 u_0^{\otimes 3} = -(q+q^3) (q^6 \, w_{1,2} + q^3 w_{1,3} + w_{2,3}).
\]
Since $\sigma_i w_{i,i+1} = q^6 w_{i,i+1}$, for $i=1,2$, we have $w_{1,2}, w_{2,3} \not\in \SH_{n,\ell}$. 
The matrices of $\WH_{3,2}$ in the basis $\{ w_{1,2}, w_{2,3}, F^2 u_0^{\otimes 3}\}$ are given by
\[
\sigma_1 = \begin{pmatrix}
q^6 & -1 & 0\\
0 & q^6 & 0\\
0 & \frac{1}{q^3 + q^5} & 1
\end{pmatrix}
\andq
\sigma_2 = \begin{pmatrix}
q^6 & 0 & 0 \\
-1 & q^6 & 0 \\
\frac{1}{q^3 + q^5} & 0 & 1
\end{pmatrix}
\]
Suppose that $\WH_{3,2} = \RH_{3,2} \oplus \SH_{3,2}$ as $B_3$-modules. Then there exists a change of basis such that the $B_3$-action on $\RH_{3,2}$ is closed. That is, there exist $\lambda_1, \lambda_2 \neq 0$ such that $\{ w_{1,2} + \lambda_1 F^2 u_0^{\otimes 3}, w_{2,3} +  \lambda_2 F^2 u_0^{\otimes 3}\}$ is a basis of $\RH_{3,2}$ on which the $B_3$-action is closed. In this basis the matrices of $\WH_{3,2}$ are given by 
\begin{align*}
\sigma_1 &= \begin{pmatrix}
 q^6 & -1 & 0 \\
 0 & q^6 & 0 \\
 \left(1-q^6\right) \lambda _1 & \lambda _1 + \left(1- q^6\right) \lambda _2+\frac{1}{q^5+q^3} & 1 \\
\end{pmatrix}
\andq
\\
\sigma_2 &= \begin{pmatrix}
 q^6 & 0 & 0 \\
 -1 & q^6 & 0 \\
 \left(1-q^6\right) \lambda _1+\lambda _2+\frac{1}{q^5+q^3} & \left(1- q^6\right) \lambda _2 & 1 \\
\end{pmatrix}.
\end{align*}
Since the $B_3$-action decomposes the last row of both matrices equals $(0,0,1)$. But this implies $\lambda_1 = \lambda_2 = 0$, which is a contradiction.
\end{proof}

\section{Extensions of the Lawrence representations at roots of unity} \label{sec:NH}
Of great interest for this manuscript are the last two cases of Theorem~\ref{thm:Wnl}, where the representation $\WH_{n,\ell}$ contains the subspace $\SH_{n,\ell}$ consisting of the socle vectors of non-simple indecomposable projective modules $\Pmd_i$. In this section we extend the representation $\WH_{n,\ell}$ to a representation $\NH_{n,\ell}$ including also the dominant head vectors of $\Pmd_i$. As we will see, this extension is non trivial, in the sense that $\NH_{n,\ell}$ is non-simple and indecomposable.

\subsection{Definition and existence of the representations}
First, we define the representations using the decomposition of $\Vs_{r-1}^{\otimes n}$ to a direct sum of projective modules.

\begin{defn}
Let $n, r \in \Npp$ and $\ell \in \N$.
The \emph{dominant space} corresponding to the strong weight \mbox{$n(r-1) - 2 \ell$} is defined as:
\[
\NH_{n,\ell} := \ker(F E)^2 \cap \VH_{n,\ell}.
\]
\end{defn}

Since the action of $\Rs$ intertwines the $D$-action, $\NH_{n,\ell}$ is a representation of the braid group $B_n$. A direct consequence of the definition is that $\WH_{n,\ell} \subseteq \NH_{n,\ell}$ as a $B_n$-submodule. Set $d_{n,\ell} := \dim \WH_{n,\ell}$, for all $\ell \in \N$ as in Theorem~\ref{thm:Wnl_exist}.

\begin{thm}\label{thm:Nnl}
Let $n, r \in \Npp$ and $\ell \in \N$ such that $0 \leq \ell < r$. Let $0 \leq j \leq r-1$ be the solution of the modular equivalence $j \equiv n + 2(\ell-1) \mod r$. Then we have the following:
\begin{enumerate}
\item if $j \geq \ell$, $\NH_{n,\ell} = \WH_{n,\ell}$ as $B_n$-modules,
\item if $j < \ell$, 
$\NH_{n,\ell} = \WH_{n,\ell} \oplus \HH_{n,\ell}$ as vector spaces, where
\[
\HH_{n,\ell} := \widetilde{x}_{\alpha_i}^{-1}(\SH_{n,\ell})
\] 
with the bijection $\widetilde{x}_{\alpha_i}$ defined as in \eqref{eq:proj_bij}.
Moreover, $\dim \HH_{n,\ell} = \dim \SH_{n,\ell} = \dim \WH_{n,\ell'}$, where $\ell'= \ell - 1 - j$. Finally, $\NH_{n,\ell}$ as a $B_n$-representation is given by a homomorphism:
\[
\rho_{n,\ell}^{\NH} \colon B_n \to \mathrm{GL} \left( d_{n, \ell} + d_{n,\ell'} , \C \right).
\]
\end{enumerate}
\end{thm}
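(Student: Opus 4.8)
The plan is to compute $\NH_{n,\ell}=\ker(FE)^2\cap\VH_{n,\ell}$ one indecomposable summand at a time in the projective decomposition of $\Vs_{r-1}^{\otimes n}$, and then to discard the ``extra'' dominant vectors that do not occur when $\ell<r$. I would first record the easy facts: $\ker E\subseteq\ker(FE)^2$ gives $\WH_{n,\ell}\subseteq\NH_{n,\ell}$, and $\NH_{n,\ell}$ is a $B_n$-submodule of $\Vs_{r-1}^{\otimes n}$ because the $\sigma_i$ commute with the $D$-action (so preserve $\ker(FE)^2$) and, by \eqref{eq:Rmatrix_V}, preserve the total $u$-grading (so preserve each $\VH_{n,\ell}$). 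Since $\Vs_{r-1}$ is projective, write $\Vs_{r-1}^{\otimes n}=\bigoplus_t M_t$ with $M_t\cong\Vmd_{r-1}$ or $M_t\cong\Pmd_i$. As $FE$ is induced by an element of $D$, the subspace $\ker(FE)^2$ respects this decomposition; and since each $M_t$ meets fewer than $2r$ consecutive strong weight spaces $\VH_{n,\bullet}$, which lie in pairwise distinct $k$-eigenspaces, the operator $H_n$ preserves $M_t$ and so $\VH_{n,\ell}$ respects the decomposition too. Hence $\NH_{n,\ell}=\bigoplus_t\bigl(\ker(FE)^2\cap\VH_{n,\ell}\cap M_t\bigr)$, and the theorem reduces to describing $\ker(FE)^2\cap M_t$.

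The core is this local computation. For $M_t\cong\Vmd_{r-1}$, \eqref{eq:action_Va} shows $FE$ acts on $u_m^\alpha$ by $\alpha^2\qint{m}\qint{r-m}$, non-zero for $1\le m\le r-1$ because $q$ is a primitive $2r$-th root of unity; hence $\ker(FE)^2\cap\Vmd_{r-1}=\ker E\cap\Vmd_{r-1}$ and simple summands add nothing to $\WH_{n,\ell}$. For $M_t\cong\Pmd_i$ I would run over the weight spaces using \eqref{eq:proj_rels1} and \eqref{eq:proj_rels2}: the only two-dimensional weight spaces are the $\spn\{w_m^H,w_m^S\}$, on which, for $m\ge1$, $(FE)^2$ acts invertibly (its eigenvalue $\gamma_{i,m}^2$ is non-zero since $\gamma_{i,m}\ne0$ for $1\le m\le i$), while for $m=0$ the vector $u_0^\alpha\otimes w_0^H$ is dominant because $FE(u_0^\alpha\otimes w_0^H)=\alpha^2(u_0^\alpha\otimes w_0^S)$ and $E(u_0^\alpha\otimes w_0^S)=0$, cf.\ \eqref{eq:xmi}; on the remaining one-dimensional weight spaces $FE$ is a non-zero scalar except on the highest weight vectors $u_0^\alpha\otimes w_0^S$, $u_0^\alpha\otimes w_0^R$ (which lie in $\ker E$) and on $u_0^\alpha\otimes w_0^L=F^{i+1}(u_0^\alpha\otimes w_0^H)$, for which $FE(u_0^\alpha\otimes w_0^L)=F(u_0^\alpha\otimes w_i^S)=0$. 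This gives
\[
\ker(FE)^2\cap\Pmd_i=(\ker E\cap\Pmd_i)\oplus\C\,(u_0^\alpha\otimes w_0^H)\oplus\C\,(u_0^\alpha\otimes w_0^L).
\]

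Next I would assemble the pieces and use the hypothesis $\ell<r$. Since the bijection $\widetilde{x}_{\alpha,i}$ of \eqref{eq:proj_bij} is weight-preserving, $u_0^\alpha\otimes w_0^H$ lies in $\VH_{n,\ell}$ precisely when its image $u_0^\alpha\otimes w_0^S$ lies in $\SH_{n,\ell}$; summing the $w_0^H$-lines over the summands therefore produces exactly $\HH_{n,\ell}=\widetilde{x}_{\alpha,i}^{-1}(\SH_{n,\ell})$, of dimension $\dim\SH_{n,\ell}$. The vector $u_0^\alpha\otimes w_0^L$ sits $i+1$ strong-weight steps below $u_0^\alpha\otimes w_0^H$, so were it to lie in $\VH_{n,\ell}$ we would get $u_0^\alpha\otimes w_0^S\in\SH_{n,\ell-i-1}$ with $\ell-i-1\ge0$; but the modular parameter of $\WH_{n,\ell-i-1}$ attached to such a summand equals $r-2-i$ (as in the proof of Proposition~\ref{prop:struct3}), so Lemma~\ref{lem:struct1} would force $r-2-i<\ell-i-1$, i.e.\ $\ell\ge r$, a contradiction. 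Thus for $\ell<r$ the $w_0^L$-lines contribute nothing, and summing the local identities yields $\NH_{n,\ell}=\WH_{n,\ell}\oplus\HH_{n,\ell}$ as vector spaces. Statement (1) follows because $j\ge\ell$ forces $\SH_{n,\ell}=\{0\}$ by Lemma~\ref{lem:struct1}; in case (2), $j<\ell$, Proposition~\ref{prop:struct3} gives $\dim\HH_{n,\ell}=\dim\SH_{n,\ell}=\dim\WH_{n,\ell'}=d_{n,\ell'}$ with $\ell'=\ell-1-j$, hence $\dim\NH_{n,\ell}=d_{n,\ell}+d_{n,\ell'}$, and the $B_n$-module structure makes this the claimed homomorphism $\rho_{n,\ell}^{\NH}\colon B_n\to\mathrm{GL}(d_{n,\ell}+d_{n,\ell'},\C)$.

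The step I expect to be the main obstacle is the $\Pmd_i$ computation: carrying out the check over all weight spaces of $\Pmd_i$, handling the non-diagonalisable $w^H/w^S$ block, and---most delicately---spotting the stray dominant vectors $u_0^\alpha\otimes w_0^L$ and showing, via the modular condition of Lemma~\ref{lem:struct1}, that precisely the hypothesis $\ell<r$ keeps them out of $\VH_{n,\ell}$.
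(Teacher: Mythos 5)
Your proof is correct and follows the same overall strategy as the paper's — reduce to the decomposition of $\Vs_{r-1}^{\otimes n}$ into indecomposable projective summands and read off $\ker(FE)^2$ from the structure of $\Vmd_{r-1}$ and $\Pmd_i$ — but your execution is more complete in one genuinely useful respect. The paper's proof simply asserts that any vector of $\NH_{n,\ell}\setminus\WH_{n,\ell}$ is represented by a dominant head vector $u_0^\alpha\otimes w_0^H$; it never addresses the vectors $u_0^\alpha\otimes w_0^L$, which also satisfy $FE(u_0^\alpha\otimes w_0^L)=F(u_0^\alpha\otimes w_i^S)=0$ and hence are dominant without being either highest weight vectors or head vectors. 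Your weight-space-by-weight-space computation of $(FE)^2$ on $\Pmd_i$ identifies exactly these as the only other candidates, and your argument via Lemma~\ref{lem:struct1} that they cannot lie in $\VH_{n,\ell}$ when $\ell<r$ (the relevant modular parameter works out to $r-2-i'$, so Lemma~\ref{lem:struct1} would force $\ell\geq r$) closes a gap that the paper leaves implicit; the same computation also justifies the reverse inclusion $\NH_{n,\ell}\subseteq\WH_{n,\ell}\oplus\HH_{n,\ell}$, which the paper's case-(2) argument does not really establish. The one point where you are at the same level of informality as the paper is the compatibility of the strong-weight grading with the chosen direct-sum decomposition into indecomposables (i.e., that each summand $M_t$ may be taken $H_n$-graded): your counting of $k$-eigenvalues inside $M_t$ presupposes that $M_t$ is spanned by strong weight vectors, which requires a graded refinement of the Krull--Schmidt decomposition. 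This is used silently throughout the structure subsection of Section~\ref{sec:WH} as well, so it is not a defect relative to the paper's own standard, but it is worth flagging as the one step neither argument fully spells out.
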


\begin{proof}
Let $j \geq \ell$ and suppose there exists a vector $w \in \NH_{n,\ell}$ such that $w \not \in \WH_{n,\ell}$. By the definition of $\NH_{n,\ell}$ the vector $w$ is represented by a dominant head vector of an indecomposable projective module in the direct sum decomposition of $\Vs_{r-1}^{\otimes n}$. So, $F E \, w \in \SH_{n,\ell} \subset \WH_{n,\ell}$. But by Theorem~\ref{thm:Wnl} we have that $\WH_{n,\ell}=\RH_{n,\ell}$, so $FE w = 0$, which is a contradiction.

Let now $j < \ell$. By Theorem~\ref{thm:Wnl} there exists a subspace $\SH_{n,\ell} \subset \WH_{n,\ell}$ spanned by the socle highest weight vectors $\left( u_0^\alpha \otimes w_0^S \right)_m$, where $m=1, \ldots, \dim \SH_{n,\ell}$, which belong in $\dim \SH_{n, \ell}$ copies of the indecomposable modules $\Pmd_i$ in the direct sum decomposition of $\Vs_{r-1}^{\otimes n}$. Recall that there exists a bijection $\widetilde{x}_{\alpha,i} \colon \head(\Pmd_i) \to \soc(\Pmd_i)$. In detail, for every vector $\left( u_0^\alpha \otimes w_0^S \right)_m$ there is a corresponding dominant head vector $\left( u_0^\alpha \otimes w_0^H \right)_m \in \Pmd_i$ such that $\widetilde{x}_{\alpha,i} \left( u_0^\alpha \otimes w_0^H \right)_m = \left( u_0^\alpha \otimes w_0^S \right)_m$. Note that
\[
\left( u_0^\alpha \otimes w_0^H \right)_m \in \NH_{n,\ell} \andq \left( u_0^\alpha \otimes w_0^H \right)_m \not\in \WH_{n,\ell}, \quad \forall m \in \{1, \ldots, \dim \SH_{n,\ell}\}.
\]
We define
\[
\HH_{n, \ell} := \widetilde{x}_{\alpha_i}^{-1}(\SH_{n,\ell}).
\]
Since $\widetilde{x}_{\alpha,i}$ is a bijection, we have that $\NH_{n,\ell} = \WH_{n,\ell}  \oplus \HH_{n,\ell}$. Finally, by the definition of $\HH_{n, \ell}$ it is immediate that $\dim \HH_{n, \ell} =  \dim \SH_{n, \ell}$.
\end{proof}

\begin{figure}[ht]
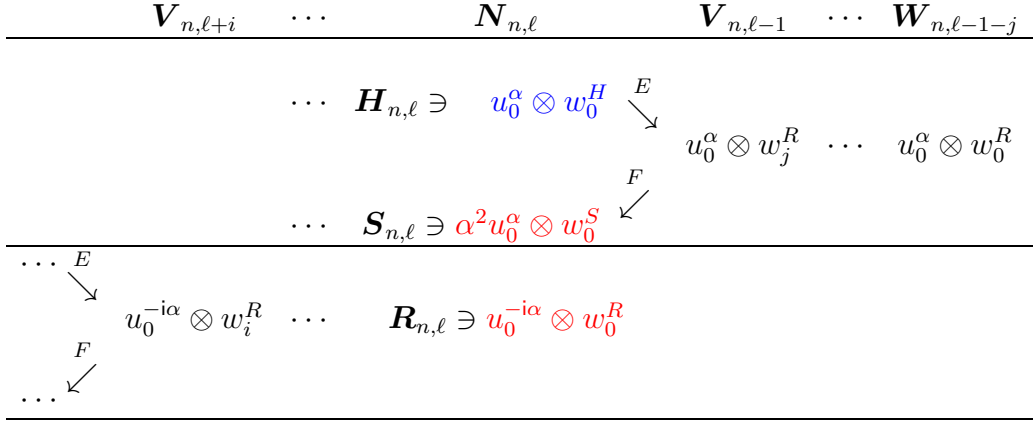

\[
\begin{matrix}
& \VH_{n,\ell+i} & \cdots &\NH_{n,\ell} & \VH_{n,\ell-1} & \cdots & \WH_{n,\ell-1-j}
\\\hline
\\
& & \cdots &\HH_{n,\ell} \ni \textcolor{blue}{\phantom{\alpha^2} u_0^\alpha \otimes w_0^H} \;
\stackrel{E}{\raisebox{-0.5\height}{$\searrow$}}&
\\
& & & & u_0^\alpha \otimes w_j^R & \cdots & u_0^\alpha \otimes w_0^R
\\
& & \cdots &
\SH_{n,\ell} \ni \textcolor{red}{\alpha^2 u_0^\alpha \otimes w_0^S} \;
\stackrel{F}{\raisebox{\height}{$\swarrow$}}&
\\\hline
\cdots \stackrel{E}{\raisebox{-\height}{$\searrow$}}&
\\
&u_0^{- \im \alpha} \otimes w_i^R & \cdots & \RH_{n,\ell} \ni \textcolor{red}{u_0^{- \im \alpha} \otimes w_0^R} \;
\\
\cdots \stackrel{F}{\raisebox{\height}{$\swarrow$}}&\\\hline
\end{matrix}
\]
\caption{The vectors in $\Pmd_i$ and $\Ps_j^{- \im \alpha}$ spanning $\NH_{n,\ell}$ for $n \geq 3$. }\label{fig:rep_vecs}
\end{figure}

We visualize in Figure~\ref{fig:rep_vecs} the vectors in the direct sum decomposition of $\Vs_{r-1}^{\otimes n}$ spanning the space $\NH_{n,\ell}$. There exist $\dim \SH_{n,\ell} = \dim \WH_{n, \ell'}$ copies of the projective module $\Pmd_i$ representing vectors of $\NH_{n,\ell}$. These contain the dominant but not highest weight vectors comprising $\HH_{n,\ell}$ (colored by blue) and the highest weight vectors comprising $\SH_{n,\ell}$ (colored by red). The $\dim \RH_{n,\ell}$ copies of $\Ps_j^{- \im \alpha}$ contain the highest weight vectors of $\RH_{n,\ell} \subset \WH_{n, \ell}$. Recall that if $n=2$, then $\RH_{n,\ell} = \{ 0 \}$.

In Proposition~\ref{prop:struct3} we showed that $\SH_{n,\ell} \cong \WH_{n,\ell'}$, where $\ell' = j - 1 - \ell$, and by Theorem~\ref{thm:Nnl} we have that $\dim \HH_{n,\ell} = \dim \WH_{n,\ell'}$. But moreover, the representation $\NH_{n,\ell}$ includes $\WH_{n,\ell'}$
also as a quotient as proved in the following.

\begin{prop} \label{prop:exseq1}
Let $n, r \in \Npp$ and $\ell \in \N$ such that $0 \leq \ell < r$. Suppose that there exists $0 \leq j < \ell$ such that $j \equiv n + 2(\ell-1) \mod r$
and set $\ell' := \ell-1-j$. There exists a short exact sequence of $B_n$-modules
\[
\begin{tikzcd}
0 \arrow[r] & \WH_{n,\ell} \arrow[r, hook] & \NH_{n,\ell} \arrow[r] & \WH_{n,\ell'} \arrow[r] & 0.
\end{tikzcd}
\]
\end{prop}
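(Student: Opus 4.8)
The plan is to exhibit the quotient map $\NH_{n,\ell} \to \WH_{n,\ell'}$ directly and identify its kernel with $\WH_{n,\ell}$. By Theorem~\ref{thm:Nnl}(2), as vector spaces $\NH_{n,\ell} = \WH_{n,\ell} \oplus \HH_{n,\ell}$, where $\HH_{n,\ell} = \widetilde{x}_{\alpha,i}^{-1}(\SH_{n,\ell})$ is spanned by the dominant head vectors $(u_0^\alpha \otimes w_0^H)_m$. The inclusion $\WH_{n,\ell} \hookrightarrow \NH_{n,\ell}$ is already known to be a $B_n$-submodule inclusion (noted right after the definition of $\NH_{n,\ell}$), so the only content is to produce the surjection onto $\WH_{n,\ell'}$ whose kernel is exactly $\WH_{n,\ell}$.

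First I would use the composite $B_n$-equivariant map $FE \colon \VH_{n,\ell} \to \VH_{n,\ell}$ (equivariant because $F$ and $E$ both commute with the $\Rs$-action, as the $B_n$-action intertwines the $D$-action). Restricted to $\NH_{n,\ell} = \ker(FE)^2 \cap \VH_{n,\ell}$, the map $FE$ lands in $\SH_{n,\ell} \subset \WH_{n,\ell}$: indeed by \eqref{eq:xmi} we have $x_{\alpha,i}(u_0^\alpha \otimes w_0^H) = \alpha^2 FE(u_0^\alpha \otimes w_0^H) = \alpha^2(u_0^\alpha \otimes w_0^S)$, so $FE$ sends each basis vector of $\HH_{n,\ell}$ to the corresponding socle vector in $\SH_{n,\ell}$ (up to the unit $\alpha^2$), and $FE$ kills $\WH_{n,\ell}$ entirely since $\WH_{n,\ell} \subset \ker E$. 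Hence $FE|_{\NH_{n,\ell}} \colon \NH_{n,\ell} \to \SH_{n,\ell}$ is a surjective $B_n$-module map with kernel exactly $\WH_{n,\ell}$. Composing with the $B_n$-module isomorphism $\SH_{n,\ell} \xrightarrow{\cong} \WH_{n,\ell'}$ from Proposition~\ref{prop:struct3}(1) gives the desired surjection $\NH_{n,\ell} \twoheadrightarrow \WH_{n,\ell'}$ with kernel $\WH_{n,\ell}$, which is precisely the asserted short exact sequence.

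The steps in order: (i) record that $FE$ is $B_n$-equivariant on $\VH_{n,\ell}$; (ii) show $FE(\NH_{n,\ell}) \subseteq \SH_{n,\ell}$ using \eqref{eq:xmi} and the structure of $\Pmd_i$ in Figure~\ref{fig:proj_D}; (iii) show $FE$ is surjective onto $\SH_{n,\ell}$ — this follows because $FE$ maps the spanning set of $\HH_{n,\ell}$ bijectively (up to units) onto the spanning set of $\SH_{n,\ell}$, by the bijection $\widetilde{x}_{\alpha,i}$; (iv) compute $\ker(FE|_{\NH_{n,\ell}}) = \WH_{n,\ell}$, again from the direct sum decomposition $\NH_{n,\ell} = \WH_{n,\ell} \oplus \HH_{n,\ell}$ together with the fact that $FE$ is injective on $\HH_{n,\ell}$ (since $x_{\alpha,i}$ restricts to a bijection on the relevant vectors and $\alpha^2$ is a unit) and $0$ on $\WH_{n,\ell}$; (v) splice in the isomorphism $\SH_{n,\ell} \cong \WH_{n,\ell'}$ from Proposition~\ref{prop:struct3}.

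The main obstacle, such as it is, is bookkeeping: one must be careful that the copies of $\Pmd_i$ appearing in the decomposition of $\Vs_{r-1}^{\otimes n}$ all have the \emph{same} $i$ (equivalently $\alpha$ and $i$ are determined by the strong weight via \eqref{eq:struct0}), so that $FE$ acts uniformly by the single scalar $\alpha^2$ on all of $\HH_{n,\ell}$ and the endomorphism $x_{\alpha,i}$ is genuinely the relevant nilpotent generator; this is exactly the content already assembled in Theorem~\ref{thm:Wnl} and Theorem~\ref{thm:Nnl}, so no new work is needed. A secondary point to state cleanly is that $B_n$-equivariance of $FE$ plus $B_n$-equivariance of the isomorphism of Proposition~\ref{prop:struct3} makes the whole sequence a sequence of $B_n$-modules, not merely of vector spaces — but both equivariance facts are already available in the excerpt. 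There is no genuine difficulty; the proposition is essentially the "dual" statement to the submodule inclusion $\SH_{n,\ell} \hookrightarrow \WH_{n,\ell}$, realized through the map $FE$ rather than through $F^{j+1}$.
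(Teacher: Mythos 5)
Your proposal is correct and follows essentially the same route as the paper: the paper's proof also uses the relation $y_k = x_{\alpha,i}(h_k) = \alpha^2 FE(h_k)$ together with $B_n$-equivariance of $FE$ and the vanishing of $E$ on $\WH_{n,\ell}$ to match the induced action on $\NH_{n,\ell}/\WH_{n,\ell}$ with that on $\SH_{n,\ell}\cong\WH_{n,\ell'}$, merely writing out the comparison of matrix coefficients explicitly where you package it as ``$FE$ is an equivariant surjection onto $\SH_{n,\ell}$ with kernel $\WH_{n,\ell}$.'' No gap.
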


\begin{proof}
Since $j < \ell$, we have by Theorem~\ref{thm:Nnl} that $\dim \HH_{n,\ell} = \dim \SH_{n,\ell} = d_{n,\ell'}$, where $d_{n,\ell'} = \dim \WH_{n,\ell'}$.
As a vector space $\NH_{n,\ell}/\WH_{n,\ell}$ is spanned by the images of the vectors of $\HH_{n,\ell}$ under the quotient map. Let $\{ h_k \mid k=1, \ldots, d_{n,\ell'} \}$ be a basis of $\HH_{n,\ell}$. For any $p = 1, \ldots, n-1$ the action of $B_n$ on the quotient $\NH_{n,\ell}/\WH_{n,\ell}$ is induced by the action of $B_n$ on $\NH_{n,\ell}$, that is:
\[
\sigma_p (h_k) = \sum_{k'=1}^{d_{n,\ell'}} \lambda_{p, k'}^{(k)} \, h_{k'} \mod \WH_{n,\ell}, \quad \text{ for some } \lambda_{p, k'}^{(k)} \in \C.
\]
Since $j < \ell$, by Proposition~\ref{prop:struct3} the subspace $\SH_{n,\ell}$ is a $B_n$-subrepresentation isomorphic to $\WH_{n,\ell'}$. Further, by Theorem~\ref{thm:Nnl}
there exists
a basis $\{ y_k \mid k=1, \ldots, d_{n,\ell'} \}$ of $\SH_{n,\ell}$ such that $y_k = x_{\alpha, i} (h_k) = \alpha^2 FE (h_k)$. We write the $B_n$-action on $\SH_{n,\ell} \cong \WH_{n,\ell'}$ as:
\[
\sigma_p (y_k) = \sum_{k'=1}^{d_{n,\ell'}} \mu_{p, k'}^{(k)} \, y_{k'}, \text{ for some } \mu_{p, k'}^{(k)} \in \C.
\]
Since the $B_n$-action commutes with the $D$-action we have for every $1 \leq k \leq d$:
\[
\sigma_p (y_k) = \sigma_p (\alpha^2 FE \, h_{k}) = \alpha^2 FE \, \sigma_p (h_{k})
= \alpha^2 FE \left(  \sum_{k'=1}^{d_{n,\ell'}} \lambda_{p, k'}^{(k)} \, h_{k'} + w \right),
\]
for some $w \in \WH_{n,\ell}$. Since $E \vert_{\WH_{n,\ell}} =0$ we have consequently for every $1 \leq k \leq d$:
\[
\sigma_p (y_k) = \alpha^2 FE \sum_{k'=1}^{d_{n,\ell'}} \lambda_{p, k'}^{(k)} \, h_{k'}
= \sum_{k'=1}^{d_{n,\ell'}} \lambda_{p, k'}^{(k)}  \alpha^2 FE \, h_{k'} = \sum_{k'=1}^{d_{n,\ell'}} \lambda_{p, k'}^{(k)}  \, y_k.
\]
Finally, $\lambda_{p, k'}^{(k)} = \mu_{p, k'}^{(k)}$, for every $p,k'$ and $k$, which proves the statement.
\end{proof}

\subsection{Faithfulness on \texorpdfstring{$Z(B_n)$}{Z(Bn)}}
The next question arising is whether the exact sequence of Proposition~\ref{prop:exseq1} is non-split, that is, whether $\NH_{n,\ell}$ is decomposable or not. In this section we show that the representation $\NH_{n, \ell}$ is faithful on $Z(B_n)$, the center of $B_n$. This implies that the sequence does not split and, therefore, the representation $\NH_{n,\ell}$ is a non trivial extension of the representation $\WH_{n, \ell}$.

For all $2 \leq i \leq n$ we define the braid $\delta_i \in B_n$ by:
\[
\delta_i := \sigma_1 \sigma_2 \ldots \sigma_{i-1}.
\]
The center of $B_n$ is generated by the braid $\theta_n := \Delta_n^2$ \cite{KaTu}, where:
\[
\Delta_n := \delta_n \delta_{n-1} \ldots \delta_2.
\]
It is easy to prove using the braid relations that
\[
\theta_n = \Delta_n^2 = \delta_n^n.
\]
Both braids $\Delta_n$ and $\delta_n$ have played an important role to the study of the word problem in $B_n$, both of them being Garside elements for two different Garside structures on $B_n$. The braid $\Delta_n$ gives rise to the usual Garside structure \cite{Gar} and the braid $\delta_n$ to the dual Garside structure \cite{BKL} (see also \cite{Ito2}). Moreover, the LKB representation (and hence also the isomorphic quantum representation $\Wb_{n,2}$) detects the Garside structure \cite{Kr2} and the whole family of the Lawrence representations (isomorphic to $\Wb_{n,\ell}$) detect the dual Garside structure as proved in \cite{Ito2}. Both results can be used to alternatively prove the faithfulness of the respective representations.

\bigbreak
To prove the faithfulness of $\NH_{n,\ell}$ on $Z(B_n)$ it suffices to prove that the action of the braid $\theta_n$ has infinite order. To do this we represent the action of $\theta_n$ on $\Vs_{r-1}^{\otimes n}$ by the analogous map in the category $D$-mod, which involves the twist operator $\theta_{\Vs_{r-1}^{\otimes n}}$. Using then the decomposition of $\Vs_{r-1}^{\otimes n}$ into indecomposable projective modules we compute the action of $\theta_n$ by restricting it on $\NH_{n,\ell}$, whose vectors belong to non-simple projective modules (see Theorem~\ref{thm:Nnl} and Figure~\ref{fig:rep_vecs}). The infinite order of the action of $\theta_n$ on $\NH_{n,\ell}$ is a consequence of the infinite order of the action of the twist on the non-simple indecomposable projective modules \eqref{eq:twist_proj}. Finally, we give an explicit formula for the action of $\theta_n$, as follows.

\begin{thm} \label{thm:faithful_center}
Let $n, r \in \Npp$ and $\ell \in \N$. Let $0 \leq j < r$ be the solution of the modular equivalence $j \equiv n + 2(\ell-1) \mod r$ and let $\ell' = \ell - 1 - j$.
The action of the full twist $\theta_n$ on $\NH_{n,\ell}$ is given by:
\[
\theta_n = q^{2 \ell (n + \ell -1)} \left( \Id + s^n q^{1 - \ell - \ell'} (\ell - \ell') \frac{\qnum{1}^2}{\qnum{\ell - \ell'}} \, F E \right).
\]
\end{thm}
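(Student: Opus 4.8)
The plan is to compute the action of $\theta_n = \Delta_n^2 = \delta_n^n$ on $\NH_{n,\ell}$ by passing to the category $D$-mod, where the braid group action is realized by the $R$-matrix and the full twist corresponds to a morphism built from the ribbon element. The key point is the standard fact (true in any ribbon category) that the square of the full-twist braiding on $V^{\otimes n}$ acts as $\theta_{V^{\otimes n}} \circ (\theta_V^{-1})^{\otimes n}$, up to the normalization factor relating $\Rs$ to the categorical braiding $c$. Concretely, since $\Rs = q^{-(r-1)^2/2} c_{\Vs_{r-1},\Vs_{r-1}}$, the braid $\theta_n$ acts on $\Vs_{r-1}^{\otimes n}$ as a scalar multiple of $\theta_{\Vs_{r-1}^{\otimes n}} \circ (\theta_{\Vs_{r-1}}^{-1})^{\otimes n}$; I would first pin down this scalar and the eigenvalue of $\theta_{\Vs_{r-1}}$ on the simple projective $\Vs_{r-1}$ using \eqref{eq:twist}, \eqref{eq:twist_proj} (for $i = r-1$ this is just a scalar) and the strong-weight bookkeeping of Definition~\ref{def:Vnl}, to extract the prefactor $q^{2\ell(n+\ell-1)}$.

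Next I would restrict $\theta_{\Vs_{r-1}^{\otimes n}}$ to $\NH_{n,\ell}$ using the direct sum decomposition of $\Vs_{r-1}^{\otimes n}$ into indecomposable projectives. By Theorem~\ref{thm:Nnl} and Figure~\ref{fig:rep_vecs}, when $j < \ell$ the space $\NH_{n,\ell}$ is spanned by vectors lying in copies of $\Pmd_i$ (contributing the head vectors of $\HH_{n,\ell}$ and socle vectors of $\SH_{n,\ell}$) and copies of $\Ps_j^{-\im\alpha}$ (contributing the right-most vectors of $\RH_{n,\ell}$). On each such projective summand the central element $\theta$ acts through $\End(\Pmd_i)$ by the explicit formula \eqref{eq:twist_proj}, i.e. as $(-1)^i q^{(i^2+2i)/2}(I_{1,i} - (r-i-1)\tfrac{\qnum{1}^2}{\qnum{i+1}} x_{1,i})$, and on simple summands $\Vmd_{r-1}$ (the case $j = r-1$) it is a scalar. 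Using \eqref{eq:xmi}, which identifies $x_{\alpha,i}$ with $\alpha^2 FE$ on the relevant head vectors, I can rewrite the nilpotent part of the twist directly in terms of the operator $FE$ acting on $\NH_{n,\ell}$. The index $i$ is determined by the modular condition: the projective $\Pmd_i$ containing the socle vectors has $i = r - 2 - j$ in terms of the $j$ from the theorem, so $r - i - 1 = j + 1$ and $i + 1 = r - 1 - j$; matching these against the target formula should produce the coefficient $s^n q^{1-\ell-\ell'}(\ell-\ell')\tfrac{\qnum{1}^2}{\qnum{\ell-\ell'}}$ once the scalar from the $(\theta_{\Vs_{r-1}}^{-1})^{\otimes n}$ factor and the weight-dependent phase $q^{(i^2+2i)/2}$ are combined, noting $\ell - \ell' = j+1$.

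The remaining work is bookkeeping: I must track how the scalar $(-1)^i q^{(i^2+2i)/2}$ attached to each $\Pmd_i$ summand, together with the global prefactor from $\Rs$'s normalization and from $\theta_{\Vs_{r-1}}^{-1}$, assembles into the single clean factor $q^{2\ell(n+\ell-1)}$ in front — and crucially verify that this factor is the \emph{same} on the $\Pmd_i$ summands and on the $\Ps_j^{-\im\alpha}$ summands (and on any $\Vmd_{r-1}$ summands), since $\theta_n$ acts on all of $\NH_{n,\ell}$ by one formula. This consistency is forced by the fact that all vectors in $\NH_{n,\ell}$ share the same strong weight $n(r-1)-2\ell$, hence the same $k$-eigenvalue up to the sign/$\im$ ambiguity in $\alpha$, which the computation of $\theta$ on each projective block must respect. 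The main obstacle I anticipate is exactly this phase reconciliation: carefully verifying that the exponents of $q$ coming from $\theta_{\Vs_{r-1}^{\otimes n}}$ on each indecomposable block, from the normalization of $\Rs$, and from stripping off $(\theta_{\Vs_{r-1}})^{-n}$ all collapse to $q^{2\ell(n+\ell-1)}$, and simultaneously that the coefficient of $FE$ comes out as stated with the correct sign, using $\alpha^4 = 1$ to kill the $\alpha$-dependence. Once the prefactor is isolated, the $FE$-term is essentially read off from \eqref{eq:twist_proj} and \eqref{eq:xmi}, since $FE$ vanishes on $\WH_{n,\ell}$ (as $E$ does) and sends $\HH_{n,\ell}$ isomorphically onto $\SH_{n,\ell}$, so the operator $\Id + (\text{const})\,FE$ is manifestly unipotent of infinite multiplicative order whenever the constant is nonzero, giving the faithfulness on $Z(B_n)$ as a corollary.
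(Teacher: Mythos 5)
Your proposal is correct and follows essentially the same route as the paper: normalize the full twist to $q^{-n(n-2)(r-1)^2/2}\,\theta_{\Vs_{r-1}^{\otimes n}}$ via the ribbon identity, restrict to the projective summands of $\Vs_{r-1}^{\otimes n}$, read off the nilpotent part from \eqref{eq:twist_proj} and \eqref{eq:xmi} using $r-i-1=j+1=\ell-\ell'$ and $\qnum{i+1}=\qnum{\ell-\ell'}$, and reconcile the scalar prefactors (the paper's $A_{-\im\alpha,j}=A_{\alpha,i}$, which, as you note, is forced by the common strong weight of the highest weight vectors). The only detail you gloss over is that \eqref{eq:twist_proj} is stated for $\alpha=1$ and must first be extended to all $\Pmd_i$ (the paper's Eq.~\eqref{eq:twist_Pmdi}), and that the residual $(-1)^m$ from $x_{\alpha,i}=\alpha^2 FE$ is absorbed by the strong-weight constraint into the factor $s^n q^{1-\ell-\ell'}$ rather than by $\alpha^4=1$ alone.
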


\begin{proof}
First we adapt \eqref{eq:twist_proj} to any indecomposable projective module $\Pmd_i$. Note that (see Appendix~\ref{app:double_braiding} for the proof):
\begin{align}
c_{\Ps_i, \Vmd_0} \circ c_{\Vmd_0, \Ps_i} &= (-1)^{m i} \, \Id_{\Pmd_i}, \label{eq:double_braiding}\\
\theta_{\Vmd_0} &= q^{\frac{(mr)^2}{2} + mr - m r^2} \,\Id_{\Vmd_0}, \label{eq:twist_cmr}
\end{align}
where $0 \leq i < r-1$ and where we write $\alpha = q^{mr/2}$, for some $m \in \Z$.
Using \eqref{eq:double_braiding}, \eqref{eq:twist_cmr} and the naturality of the twist, it holds that (see Appendix~\ref{app:twist_Pmdi} for the proof):
\begin{equation} \label{eq:twist_Pmdi}
\theta_{\Pmd_i} = (-1)^{mr+i} q^{\frac{(mr+i)^2}{2}+(mr+i)} \left( I_{\alpha,i} - (r - i - 1) \frac{\qnum{1}^2}{\qnum{i+1}} x_{\alpha,i} \right).
\end{equation}

Now, by the naturality of the twist, \eqref{eq:Bn_action_V}, \eqref{eq:sigma_i} and the equality $\theta_{\Vs_{r-1}} = q^{-\frac{(r-1)^2}{2}} \,\Id_{\Vs_{r-1}}$, the braid $\theta_n$ acts on $\Vs_{r-1}^{\otimes n}$ by the map:
\[
\Theta_n := q^{-n(n-2)\frac{(r-1)^2}{2}} \theta_{\Vs_{r-1}^{\otimes n}}.
\]
By the direct sum decomposition of $\Vs_{r-1}^{\otimes n}$ into projective modules we can write:
\[
\theta_{\Vs_{r-1}^{\otimes n}} = \theta_{\bigoplus_{\alpha \in I} \bigoplus_{m \in J} \Pmd_m}
=  \bigoplus_{\alpha \in I} \bigoplus_{m \in J} \theta_{ \Pmd_m },
\]
where $I \subset \Z$ and $J \subset \{ 0, \ldots, r-1 \}$. Restricting the map $\theta_{\Vs_{r-1}^{\otimes n}}$ on $\NH_{n,\ell}$ we get by Theorem~\ref{thm:Nnl}:
\[
\theta_{\Vs_{r-1}^{\otimes n}} \vert_{\NH_{n,\ell}} = \theta_{\Pmd_i} \vert_{\NH_{n,\ell}} \oplus \theta_{\Ps_j^{- \im \alpha}} \vert_{\NH_{n,\ell}} ,
\]
where $0 \leq i < r-2$ and where $j = r - 2 - i$.
By \eqref{eq:twist_cmr} we have that:
\[
\theta_{\Pmd_i}  = A_{\alpha,i} \left( I_{\alpha,i} - (r - i - 1) \frac{\qnum{1}^2}{\qnum{i+1}} x_{\alpha,i} \right).
\]
where
\[
A_{\alpha,i} =  (-1)^{\xi} q^{\frac{\xi^2}{2} + \xi}, \quad \text{ with } \xi := mr + i.
\]
Since $(m-1)r+j = \xi - 2(i+1)$ we get that:
\begin{align*}
A_{- \im \alpha, j} &= (-1)^{\xi - 2(i+1)} q^{\frac{(\xi - 2(i+1))^2}{2}+\xi - 2(i+1)} 
= (-1)^{\xi} q^{\frac{\xi^2}{2} - 2\xi(i+1) + 2(i+1)^2 + \xi - 2(i+1)} 
\\
&= (-1)^{\xi} q^{\frac{\xi^2}{2} + \xi} q^{- 2\xi(i+1) + 2(i+1)^2 - 2(i+1)}
= (-1)^{\xi} q^{\frac{\xi^2}{2} + \xi} q^{- 2i(i+1) + 2(i+1)^2 - 2(i+1)}
\\
&= (-1)^{\xi} q^{\frac{\xi^2}{2} + \xi}
= A_{\alpha,i},
\end{align*}
where at the fourth equality we use that $q^{2 \xi} = q^{2i}$.
Moreover, since $\NH_{n,\ell} \cap \left( \Ps_j^{- \im \alpha} \right)$ contains only highest weight vectors, the nilpotent map $x_{-\im \alpha,j}$ acts by $0$ on them. Hence:
\[
\theta_{\Ps_j^{- \im \alpha}} \vert_{\NH_{n,\ell}} = A_{\alpha,i} \, I_{-\im \alpha,j}.
\]
Putting everything together:
\[
\Theta_n \vert_{\NH_{n,\ell}} = q^{-n(n-2)\frac{(r-1)^2}{2}} A_{\alpha,i} \left( \Id - (-1)^m (\ell - \ell') \frac{\qnum{1}^2}{\qnum{\ell - \ell'}}  F E \right),
\]
where we also use \eqref{eq:xmi} and that $i+1=r-(j+1)=r-(\ell - \ell')$.

Now it remains to compute the coefficient $q^{-n(n-2)\frac{(r-1)^2}{2}} A_{\alpha,i}$. Since we work on $\NH_{n,\ell}$ we have that $q^{\frac{n(r-1)-2\ell}{2}}=q^{\frac{\xi}{2}}$. Using also that $q^{2r}=1$ we get
\begin{align*}
A_{\alpha,i} &= (-1)^{n(r-1) - 2 \ell} q^{\frac{(n(r-1) - 2 \ell)^2}{2}} q^{n(r-1) - 2 \ell} 
\\
&= q^{-r(n(r-1) - 2 \ell)} q^{\frac{n^2 (r-1)^2}{2} - 2n(r-1) \ell + 2 \ell^2} q^{n(r-1) - 2 \ell} 
\\
&= q^{\frac{n^2 (r-1)^2}{2} + 2n \ell + 2 \ell^2} q^{n(r-1) - 2 \ell} q^{-rn(r-1)}.
\end{align*}
Hence:
\begin{align*}
q^{-n(n-2)\frac{(r-1)^2}{2}} A_{\alpha,i} &= q^{n(r-1)^2 + 2n \ell + 2 \ell^2 + n(r-1) - 2 \ell -rn(r-1)}
\\
&= q^{n(r-1)r + 2n \ell + 2 \ell^2 - 2 \ell -rn(r-1)}
= q^{2 \ell (n + \ell -1)}.
\end{align*}
Finally for the coefficient of $F E$ we have that:
\[
(-1)^{m+1} = q^{n(r-1)-2 \ell - i + r} = q^{n(r-1) -2 \ell + 2 + j} = s^n q^{-2(\ell - 1) + j} = s^n q^{1 - \ell - \ell'},
\]
which concludes the proof for the formula of the action of $\theta_n$. 
\end{proof}

It is immediate due to Theorem~\ref{thm:faithful_center} that the representations $\WH_{n,\ell}$ are not faithful. Since the full twist $\theta_n$ acts on $\WH_{n,\ell}$ by the scalar $q^{2 \ell (n + \ell -1)}$, we have consequently that $\theta_n^r \in \ker \rho_{n,\ell}^{\WH}$. Further, the formula of the action of $\theta_n$ depends only on $n, r$ and $\ell$, since $\ell'$ already depends on them.

\begin{rem} \label{rem:non_trivial_exts}
Note that the non trivial extensions $\NH_{n,\ell}$ of $\WH_{n,\ell}$ depend actually on the triple $(n, \ell, \ell')$ where $\ell' = \ell -j -1$ with $0 \leq j < \ell$ such that $j \equiv n + 2(\ell-1) \mod r$. Note that
\[
\ell \equiv 1 - n - \ell' \mod r
\Leftrightarrow
j \equiv n + 2(\ell - 1) \mod r.
\]
Therefore, we denote these non trivial extensions from now on by $\NH_{n,\ell,\ell'}$ as in Theorem~\ref{thm:thm1}. 
\[
\NH_{n, \ell, \ell'} := \ker(FE)^2 \cap \VH_{n,\ell} \; \text{ if } \exists \ell' \in \{0, \ldots, \ell-1\} \; \text{ s.t. } \; \ell' \equiv 1 - n - \ell \mod r.
\]
If such a $\ell'$ does not exist, we use the notation $\NH_{n,\ell}$, since in that case $\NH_{n,\ell} = \WH_{n,\ell}$.
\end{rem}

\bigbreak
Using Theorem~\ref{thm:faithful_center} we now prove the main Theorem of this manuscript.

\begin{proof}[Proof of Theorem~\ref{thm:thm1}]
The short exact sequence of the statement exists due to Theorem~\ref{thm:Nnl} and Proposition~\ref{prop:exseq1}. The modular condition of Theorem~\ref{thm:thm1} is due to Remark~\ref{rem:non_trivial_exts} equivalent to the one of Proposition~\ref{prop:exseq1}.
By Theorem~\ref{thm:faithful_center} the action of the full twist $\theta_n$ on $\NH_{n,\ell,\ell'}$ is given by
\[
\theta_n = q^{2 \ell (n + \ell -1)} \left( \Id + s^n q^{1 - \ell - \ell'} (\ell - \ell') \frac{\qnum{1}^2}{\qnum{\ell - \ell'}} \, F E \right).
\]
Since $\ell' < \ell$, and therefore $\NH_{n,\ell,\ell'} \supsetneq \WH_{n,\ell}$, the map $FE$ is a non-zero map. Now, the coefficient of $F E$ is clearly not a root of unity for $\ell' < \ell - 1$. For $\ell' = \ell + 1$ we have that 
$ (\ell - \ell') \frac{\qnum{1}^2}{\qnum{\ell - \ell'}} = \qnum{1}$, which is also not a root of unity for any $r \geq 2$. Hence, $\theta_n$ has infinite order and the action of $Z(B_n)$ is faithful. Therefore, for every $w_1 \in \HH_{n,\ell}$ and any $w_2 \in \WH_{n,\ell}$ we have that:
\begin{align*}
\theta_n w_1 &= q^{2 \ell (n + \ell -1)} \left( w_1 + s^n q^{1 - \ell - \ell'} (\ell - \ell') \frac{\qnum{1}^2}{\qnum{\ell - \ell'}} \, F E \, w_1 \right),
\\
\theta_n w_2 &= q^{2 \ell (n + \ell -1)}  w_2,
\end{align*}
with $F E \, w_1 \in \SH_{n,\ell}$. Hence, the exact sequence of the statement does not split.
\end{proof}

\begin{rem}
Note further that for every $n \in \Npp$ and $\ell' \in \N$ there exist $r \in \Npp$ and $\ell \in \N$ with $\ell' < \ell < r$ such that the modular condition $\ell' \equiv 1 - n - \ell \mod r$ is satisfied.  For example, setting $r = n + 2 \ell'$ we get $\ell = \ell' + 1$, which satisfies the modular condition.
\end{rem}

\section{Extensions of the LKB representation at roots of unity} \label{sec:examples}
In this section we describe explicitly the action of $B_n$ for the representations $\NH_{n,2,0}$ and $\NH_{n,2,1}$ that contain the LKB representation at roots of unity. In detail $\NH_{n,2,0}$ extends the trivial representation by LKB and $\NH_{n,2,1}$ extends the (reduced) specialized Burau representation by the specialized LKB.

\subsection{The case \texorpdfstring{$\ell=1$}{l=1}}
Let us warm up by studying the representations $\NH_{n,1}$ and in particular $\NH_{n,1,0}$. 
By Theorem~\ref{thm:Nnl} there exists for $\ell =1$ a non trivial extension of $\WH_{n,1}$ if and only if $n \equiv 0 \mod r$. Since $\dim \VH_{n,1} = n = (n-1) + 1 =  \dim  \WH_{n,1} +  \dim \WH_{n,0}$, we have consequently by Theorem~\ref{thm:Nnl} that
\begin{align*}
\NH_{n,1,0} = \VH_{n,1}& &\text{ if } n \equiv 0 \mod r,
\\
\NH_{n,1} = \WH_{n,1}& &\text{ if } n \not\equiv 0 \mod r.
\end{align*}
In \cite{JK} the representation $\Wb_{n,1}$ (resp. $\Vb_{n,1}$) is proved to be isomorphic to the reduced (resp. unreduced) Burau representation. In detail, the vectors $c_i$, $1 \leq i \leq n$ \eqref{eq:ci} span the space $\Vb_{n,1}$. Changing now the basis by setting $\widehat{c}_i := s^i\, c_i$ for $1 \leq i \leq n$ and by computing the $B_n$-action we obtain:
\[
\begin{aligned}
\sigma_i \, \widehat{c}_j &= \widehat{c}_j \qquad \text { for } j \neq i, i+1,\\
\sigma_i \, \widehat{c}_i &= \ts \, \widehat{c}_{i+1} + (1 - \ts)\, \widehat{c}_i,\\
\sigma_i \, \widehat{c}_{i+1} &= \widehat{c}_i.
\end{aligned}
\]
This basis gives the well-known matrices of the unreduced Burau representation \cite{Bur} and we have that $\Vb_{n,1}$ is isomorphic to the Burau representation by setting $\ts = s^{-2}$ \cite{JK}. We denote by $\widetilde{\rho}_\text{Bur}(\ts) \colon B_n \to GL_n(\Z[\ts^\pm])$ the unreduced Burau representation and by $\rho_\text{Bur}(\ts) \colon B_n \to GL_{n-1}(\Z[\ts^\pm])$ its reduced version.

Passing now to the representation $\VH_{n,1}$ we recover the Burau representation, specialized at $\ts = s^{-2} = q^2$.  It is well known, that $\ker \widetilde{\rho}_\text{Bur}(\ts) = \ker \rho_\text{Bur}(\ts)$. If we specialize $\ts$ to a complex number then the two representations are equivalent when $1 + \ts + \ldots + \ts^{n-1} \neq 0$ \cite{Bir}. But since $\ts = q^2$ and $n \equiv 0 \mod r$ for our case, the sum is a multiple of the sum of all $r$-th roots of unity, which is 0 and we have that $\ker \widetilde{\rho}_\text{Bur}(q^2) \subseteq \ker \rho_\text{Bur}(q^2)$. Moreover, by Theorem~\ref{thm:faithful_center} it holds actually that $\ker \widetilde{\rho}_\text{Bur}(q^2) \subsetneq \ker \rho_\text{Bur}(q^2)$.

\subsection{The case \texorpdfstring{$\ell=2$}{l=2}}
We continue now to studying the non trivial extensions of $\WH_{n,2}$.
In this case we get two families of representations, the representations $\NH_{n,2,0}$ extending the the trivial representation by the LKB (specialized at roots of unity) representation and the representations $\NH_{n,2,1}$ extending the specialized Burau representation by the specialized LKB representation. By Theorem~\ref{thm:Nnl} we have that
\begin{align*}
\NH_{n,2,0} = \WH_{n,0} \oplus \WH_{n,2} \quad \text{(as vector spaces)} \quad  \text{ if } \quad n \equiv -1 \mod r,
\\
\NH_{n,2,1} = \WH_{n,1} \oplus \WH_{n,2} \quad \text{(as vector spaces)} \quad  \text{ if } \quad n \equiv -2 \mod r.
\end{align*}
In order to explicitly calculate the $B_n$-action on the spaces $\NH_{n,2,0}$ and $\NH_{n,2,1}$ we need first to find a suitable basis. For their subspace $\WH_{n,2}$ we can describe a basis using the isomorphism $\AH_{n,\ell} \stackrel{\Phi}{\cong} \WH_{n,\ell}$. We fix the following notation for the basis of the space $\VH_{n,2} = \AH_{n,2} \oplus \BH_{n,2}$ according to \eqref{eq:V_nl_basis}:
\begin{equation*}
\begin{aligned}
\AH_{n,2} &= \spn \{ a_{i,j} := u_0^{\otimes i -1} \otimes u_1 \otimes u_0^{\otimes j - i -1} \otimes u_1 \otimes u_0^{\otimes n - j} \mid 1 \leq i < j \leq n \},
\\
\BH_{n,2} &= \spn \{ b_i := u_0^{\otimes i -1} \otimes u_2 \otimes u_0^{\otimes n - i} \mid 1 \leq i \leq n \}.
\end{aligned}
\end{equation*}
Hence, the space $\WH_{n,2}$ is spanned by the vectors of the form
\begin{equation} \label{eq:W_n2_basis}
w_{i,j} := \Phi(a_{i,j}) = a_{i,j} - s^{j-i} q^{-2} \, b_j - s^{i-j} \, b_i \quad \text{ for } \quad 1 \leq i < j \leq n.
\end{equation}
Recall that $s = q^{r-1}$ \eqref{eq:s}. Moreover, the action of $B_n$ on $\WH_{n,2}$ \cite{JK}, where $\{i,i+1\} \cap \{j,k\} = \emptyset$ is given by \eqref{eq:LKB},
which also describes the $B_n$-action for the 2-variable LKB representation, where $q$ and $s$ are considered as variables.

Let now $\sum_{i=1}^n \lambda_i b_i$ be an element of $\BH_{n,2}$. In order to find the coefficients $\lambda_i$, so that it belongs to $\NH_{n,2,0}$ or $\NH_{n,2,1}$, we compute the action of $E F E$ on the basis vectors $b_i$. For the action of $E$ on $b_i$, it holds that $E \, b_i = s^{n-i} \, c_i$, where $c_i = u_0^{\otimes i -1} \otimes u_1 \otimes u_0^{\otimes n-i}$ \eqref{eq:ci}. Now, we compute $F \, c_i$:
\begin{equation} \label{eq:Fci}
F  \,  c_i =  s^{-(i-1)} q^2 \sum_{j=1}^{n-i} s^{-j} \, a_{i,i+j} 
+  \sum_{j=1}^{i-1} s^{-(j-1)}\, a_{j,i} + \qint{2}^2 s^{-(i-1)} \, b_i.
\end{equation}
For the proof, see Appendix~\ref{app:Fci}. We now write $F c_i$ with respect to the basis of the decomposition $\VH_{n,2} \cong \WH_{n,2} \oplus \BH_{n,2}$. By \eqref{eq:W_n2_basis} we have that:
\begin{equation}\label{eq:aij_wij}
w_{i,i+j} = a_{i,i+j} - s^{j} q^{-2} \, b_{i+j} - s^{-j} \, b_i
\andq
w_{j,i} = a_{j,i} - s^{i-j} q^{-2} \, b_i - s^{j-i} \, b_j.
\end{equation}
Substituting them into \eqref{eq:Fci} it follows that:
\begin{equation} \label{eq:Fci2}
F\, c_i = s^{-(i-1)} q^2 \sum_{j=1}^{n-i} s^{-j} \, w_{i,i+j} + \sum_{j=1}^{i-1} s^{-(j-1)} \, w_{j,i} + s^{-(i-1)} \left[ \sum_{\substack{j=1 \\ j \neq i}}^{n} b_{j} + \beta_i \, b_i \right],
\end{equation}
where:
\begin{equation} \label{eq:beta_i}
\beta_i 
= 1 + \frac{q^{-2i}(1-q^{2n+2})}{1-q^2}
= \begin{cases}
q^{-2i } + 1 = s^{2i} + 1  & \text{ if } n \equiv -1  \mod r,
\\
1 & \text{ if } n \equiv -2  \mod r.
\end{cases}
\end{equation}
Since $E w_{i,j} = 0$, for all $1 \leq i < j \leq n$, we conclude by \eqref{eq:Fci2} that:
\begin{equation}\label{eq:lin_sys}
\begin{aligned}
E F E \, &\left( \sum_{i=1}^n \lambda_i \, b_i \right) = 0
\Leftrightarrow \sum_{j=1}^n \left[ \sum_{\substack{i=1 \\ i \neq j}}^{n} \lambda_i s^{-2i-j}  + \lambda_j s^{-3j} \beta_j \right] \, c_j  = 0
\\
&\Leftrightarrow  \sum_{\substack{i=1 \\ i \neq j}}^{n} \lambda_i s^{-2i-j} + \lambda_j s^{-3j} \beta_j  = 0 \quad \text{ for all } j=1,\ldots,n.
\end{aligned}
\end{equation}
The solutions to the coefficients $\lambda_i$ depend on the value of $\beta_j$ \eqref{eq:beta_i}, which differs depending on the modular condition between $n$, $\ell$ and $r$ of Theorem~\ref{thm:Nnl}. In the next sections, we study the two cases separately.

\subsection{The case \texorpdfstring{$\ell=2$}{l=2} and \texorpdfstring{$n \equiv -1 \mod r$}{n=-1 mod r}} \label{sec:N20}

We solve now the linear system \eqref{eq:lin_sys} for the representation $\NH_{n,2,0}$. Since $\beta_j = q^{-2j} + 1 = s^{2j}+1$, we get that:
\[
\sum_{i=1}^{n} \lambda_i s^{-2i-j} + \lambda_j s^{-j} = 0 \quad \text{ for all } j=1,\ldots,n.
\]
It is easy to see that $\lambda_1 = \ldots = \lambda_n = 1$ is a solution for the system.
So, by Theorem~\ref{thm:Nnl}, we have that for $n \equiv -1 \mod r$:
\[
\NH_{n,2,0} = \spn \{ b := b_1 + \ldots + b_n \} \oplus \WH_{n,2} , \quad \text{(as vector spaces)}.
\]
Using the $R$-matrix \eqref{eq:Rmatrix_V}, the definition of the $B_n$-action \eqref{eq:Bn_action_V} and \eqref{eq:aij_wij}, we compute the $B_n$-action on the basis vectors of $\BH_{n,2}$:
\begin{equation}\label{eq:action_BH_4}
\begin{aligned}
\sigma_i b_j &= b_j \qquad \text{ for } j \neq i, i+1,\\
\sigma_i b_i &= s^{-3} (1 - q^2) \, w_{i,i+1} + (1-q^2)\, b_i + b_{i+1},\\
\sigma_i b_{i+1} &= q^2 \, b_i.
\end{aligned}
\end{equation}
Finally it holds that
\begin{equation}\label{eq:action_vec_b}
\sigma_i b = b + s^{-3} (1-q^2) \, w_{i,i+1}.
\end{equation}
By Theorem~\ref{thm:faithful_center} we know that $\NH_{n,2,0}$ is faithful on $Z(B_n)$. In detail:
\begin{equation} \label{eq:twist_LKB_triv_ext}
\theta_n b = b - \frac{2 \im \, s^n q^{-1}}{\tan(\pi / r )} FE \, b
\andq
\theta_n w_{i,j} = w_{i,j}.
\end{equation}
The action of $\theta_n$ on the vectors $w_{i,j}$ can be proved using alternatively a result of Krammer about the action of $\Delta_n$ on the LKB represetantion \cite[Lemma 3.2]{Kr2} and using the isomorphism between the space $\Wb_{n,2}$ and the LKB representation found in \cite{JK}.
Equation~\ref{eq:twist_LKB_triv_ext} implies immediately that $Z(B_n) \subseteq \ker \rho_{n,\ell}^{\WH}$ whereas $Z(B_n) \not\subset \ker \rho_{n,\ell}^{\NH}$ , for $n \equiv -1 \mod r$.

\subsection{The case \texorpdfstring{$\ell=2$}{l=2} and \texorpdfstring{$n \equiv -2 \mod r$}{n=-2 mod r}} \label{sec:Nn2}

We proceed as in Section \ref{sec:N20} in order to describe the $B_n$-action on $\NH_{n,2,0}$. We solve the linear system \eqref{eq:lin_sys}. Since $\beta_j = 1$, we get that:
\[
\sum_{i=1}^{n} \lambda_i s^{-2i} = 0 \quad \text{ for all } j=1,\ldots,n.
\]
It is easy to see that $\lambda_j = s^{j-n}$, $\lambda_n = - s^{-(j-n)}$ and $\lambda_i = 0$ for $i \neq j$ and $i, j \neq n$ is a solution for the system for every $1 \leq j \leq n-1$. Therefore, by Theorem~\ref{thm:Nnl}, we have that for $n \equiv -2 \mod r$:
\[
\NH_{n,2,1} = \spn \{ b'_j := s^{j-n} \, b_j - s^{-(j-n)} \, b_n \mid 1 \leq j \leq n-1\} \oplus \WH_{n,2} , \; \text{(as vector spaces)}.
\]
Using now \eqref{eq:action_BH_4} we calculate the action of $B_n$ on the basis vectors $b'_j$:
\begin{align*}
\sigma_i.b'_j &= b'_j & \text{ for } j \neq i, i+1,
\\
\sigma_i.b'_i &= s^{i-n-3} (1-q^2) \, w_{i,i+1} + (1-s^{-2}) \, b'_i + s^{-1} \, b'_{i+1} & \text{ for } i \neq n-1,
\\
\sigma_i.b'_{i+1} &= s^{-1} \, b'_i & \text{ for } i \neq n-1,
\\
\sigma_{n-1}.b'_{j} &= b'_{j} - s^{n-j-1} \, b'_{n-1} & \text{ for } j \neq n-1,
\\
\sigma_{n-1}.b'_{n-1} &= s^{-4} (1-q^2) \, w_{n-1,n} - s^{-2} \, b'_{n-1}.
\end{align*}
The choice of the solution of the linear system \eqref{eq:lin_sys} is canonical with respect to the representation $\WH_{n,1}$, in the sense that $\{ E \, b_j' \mid j=1,\ldots,n-1\}$ is the basis of $\WH_{n,1}$ as in Lemma~\ref{lem:Wnl_Anl}:
\[
E \, b_j' = c_j - s^{n-i} c_n = \Phi(a_i) \andq E \, w = 0 \; \forall w \in \WH_{n,2}.
\]
Moreover, by Theorem~\ref{thm:faithful_center}:
\[
\theta_n b'_j = q^{-4} b'_j - s^n q^{-6} FE \, b'_j
\andq
\theta_n w_{i,j} = q^{-4}w_{i,j}.
\]
We see immediately that $\langle \theta_n^m \rangle \subseteq \ker \rho_{n,\ell}^{\WH}$, for some $m \in \Np$ (depending on $r$), whereas $Z(B_n) \not\subset \ker \rho_{n,\ell}^{\NH}$.

\subsection{The case \texorpdfstring{$\ell=2$}{l=2} and non-faithfulness for \texorpdfstring{$n \geq 3$}{n>=3} and for \texorpdfstring{$r \geq 5$}{r>=5}}
In this section we prove that the representations $\NH_{n,2,0}$ and $\NH_{n,2,1}$ are not faithful when $n \geq 3$ and $r \geq 5$. 

The LKB representation is known to be equivalent to an irreducible representation of the Birman--Murakami--Wenzl (BMW) algebra \cite{Zi}. The generators of the BMW algebra corresponding to the generators of $B_n$ satisfy a cubic relation (see also \cite{MW2}). Therefore, the matrices of the representation $\Wb_{n,2}$ for $n \geq 3$ corresponding to $\sigma_i \in B_n$ also satisfy a cubic relation (for $n=2$ we have that $\dim \Wb_{n,2} = 1$). In fact, we have that
\[
(\sigma_i - 1) (\sigma_i + s^{-2}) (\sigma_i - s^{-4} q^2) = 0, \forq 1 \leq i < n.
\]
The above equation can be easily proven by computing the eigenvalues of the matrix for $\sigma_1$ for $n=3$. Hence
\[
p(X) := (X - 1) (X + s^{-2}) (X - s^{-4} q^2)
\]
is the minimal polynomial satisfied by the matrices of $\Wb_{n,2}$ for $n \geq 3$. 

Note that $1$ is the single eigenvalue of the trivial representation and $1, -s^{-2}$ are the eigenvalues of the Burau representation $\Wb_{n,1}$. Moreover, the matrices of $\NH_{n,2,0}$ and $\NH_{n,2,1}$ (in an ordered basis consisting of the head vectors in $\HH_{n,2}$ followed by the vectors of $\WH_{n,2}$) are lower triangular, with two diagonal blocks (one for the trivial or the Burau representation and one for the LKB representation). Therefore, the eigenvalues of $\NH_{n,2,0}$ and $\NH_{n,2,1}$ are exactly $1, -s^{-2}= q^{r+2}$ and $s^{-4} q^2 = q^6$. Moreover, if the three eigenvalues, which are now complex numbers, are distinct, then the minimal polynomial of the matrices of $\WH_{n,2}$ is again the polynomial $p(X)$. For the eigenvalues to be distinct, it has to hold that
\begin{itemize}
\item $q^{r+2} \neq 1 \Leftrightarrow r \geq 3$;
\item and $q^6 \neq 1 \Leftrightarrow r \geq 4$;
\item and $q^{r+2} \neq q^6 \Leftrightarrow q^{r-4} \neq 1 \Leftrightarrow r \geq 5$.
\end{itemize}
Now, we can conclude the following, by proving that the matrices of $\NH_{n,2,0}$ and $\NH_{n,2,1}$ also satisfy the polynomial $p(X)$:

\begin{lemma} \label{lem:min_pol}
Let $n \in \N_{\geq 3}$, $r \in \N_{\geq 5}$.
The minimal polynomial of the matrices of $\NH_{n,2,0}$ and $\NH_{n,2,1}$ is the polynomial $p(X)$ (with $q=e^{\pi \im / r}$ and $s=q^{r-1}$).
\end{lemma}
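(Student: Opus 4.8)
As the discussion preceding the lemma already shows, the whole problem reduces to checking that the matrices representing the generators $\sigma_p$ in $\NH_{n,2,0}$ and in $\NH_{n,2,1}$ annihilate $p(X)=(X-1)(X+s^{-2})(X-s^{-4}q^2)$. Indeed, the eigenvalues have been identified as $1$, $-s^{-2}=q^{r+2}$ and $s^{-4}q^2=q^6$, which are pairwise distinct for $r\geq 5$, and all three genuinely occur: $q^6$ on the eigenvector $w_{p,p+1}$, and $1,q^{r+2}$ already on the invariant sub-block $\WH_{n,2}$ (whose minimal polynomial is $p$ for $n\geq 3$). So once $p(\sigma_p)=0$ is established, the minimal polynomial must be exactly $p(X)$, and the same for both representations. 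Thus the plan is simply to prove $p(\sigma_p)=0$ for every generator.

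For this I would set $f(X):=(X-1)(X+s^{-2})$, so that $p(X)=(X-s^{-4}q^2)\,f(X)$, and use three structural facts. First, $\WH_{n,2}$ is a $B_n$-submodule of $\NH_{n,2,\ell'}$ on which $p(\sigma_p)$ vanishes (the cubic relation of the specialized LKB representation for $n\geq 3$); in particular $f(\sigma_p)|_{\WH_{n,2}}$ has image inside $\ker(\sigma_p-s^{-4}q^2)$. Second, the quotient $\NH_{n,2,\ell'}/\WH_{n,2}$ — the trivial representation if $\ell'=0$, the reduced specialized Burau representation if $\ell'=1$ — satisfies $f(\sigma_p)=0$ ($\sigma_p$ is the identity in the trivial case, and satisfies the Hecke relation $\sigma_p^2=(1-s^{-2})\sigma_p+s^{-2}$ in the Burau case), so $f(\sigma_p)$ maps all of $\NH_{n,2,\ell'}$ into $\WH_{n,2}$. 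Third, by the explicit action \eqref{eq:action_vec_b} when $\ell'=0$ and by the formulas of Section~\ref{sec:Nn2} when $\ell'=1$, the generator $\sigma_p$ sends each head vector ($b$, respectively $b'_1,\dots,b'_{n-1}$) to a linear combination of head vectors plus a scalar multiple of the \emph{single} basis vector $w_{p,p+1}\in\WH_{n,2}$. From the third fact, the span $U_p$ of all head vectors together with $w_{p,p+1}$ is $\sigma_p$-invariant, $U_p\cap\WH_{n,2}=\C\,w_{p,p+1}$, and $\sigma_p\,w_{p,p+1}=s^{-4}q^2\,w_{p,p+1}$ by \eqref{eq:LKB}. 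Hence for a head vector $h$ one gets $f(\sigma_p)h\in U_p\cap\WH_{n,2}=\C\,w_{p,p+1}\subseteq\ker(\sigma_p-s^{-4}q^2)$, while $f(\sigma_p)\,\WH_{n,2}\subseteq\ker(\sigma_p-s^{-4}q^2)$ by the first fact; since the head vectors together with $\WH_{n,2}$ span $\NH_{n,2,\ell'}$, this yields $\operatorname{im}f(\sigma_p)\subseteq\ker(\sigma_p-s^{-4}q^2)$, and therefore $p(\sigma_p)=(\sigma_p-s^{-4}q^2)\,f(\sigma_p)=0$. Applying this to every generator finishes the proof.

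The one step that is not purely formal is the third fact — that the off-diagonal coupling from the head vectors into $\WH_{n,2}$ is supported \emph{only} on $w_{p,p+1}$ — and this is where one must inspect the explicit matrices. I expect this to be the main, though minor, obstacle; it is immediate from the formulas already recorded in Sections~\ref{sec:N20} and \ref{sec:Nn2}. Equivalently, one may phrase the whole argument as follows: although the extension of Proposition~\ref{prop:exseq1} does not split as a $B_n$-module (Theorem~\ref{thm:thm1}), it does split after restriction to the cyclic subgroup generated by any single $\sigma_p$, because $w_{p,p+1}\in(\sigma_p-1)\WH_{n,2}$ (it is an eigenvector of $\sigma_p$ with eigenvalue $s^{-4}q^2\neq 1$), so a suitable modification of each head vector becomes a genuine $\sigma_p$-eigenvector; hence $\sigma_p$ is diagonalizable with spectrum $\{1,q^{r+2},q^6\}$, and the fact that these splittings cannot be realized simultaneously for all $p$ is exactly the non-splitting of the full extension.
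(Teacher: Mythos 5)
Your proof is correct, but it takes a genuinely different route from the paper's. The paper's argument (Appendix A.7) is a direct computation for the generator $\sigma_1$, extended to the other generators by conjugacy: for $\NH_{n,2,0}$ it derives the closed formula $\sigma_1^k b = b + t\sum_{m=0}^{k-1}(s^{-4}q^2)^m\, w_{1,2}$ by induction and checks that the coefficients of $b$ and of $w_{1,2}$ in $p(\sigma_1)b$ both vanish; for $\NH_{n,2,1}$ it reduces the verification to $b_1'$ (equivalently $b_2'$) and evaluates $\sigma_1^k b_2'$ explicitly, with a Mathematica verification of the resulting identity. Your argument replaces all of this by the structural observations that $f(\sigma_p)=(\sigma_p-1)(\sigma_p+s^{-2})$ annihilates the quotient (trivial, resp.\ Burau via the Hecke relation) and hence maps $\NH_{n,2,\ell'}$ into $\WH_{n,2}$, and that the off-diagonal coupling of $\sigma_p$ from the head vectors into $\WH_{n,2}$ is supported on the single vector $w_{p,p+1}$, which by \eqref{eq:LKB} is a $\sigma_p$-eigenvector for the remaining eigenvalue $s^{-4}q^2$; this forces $p(\sigma_p)=(\sigma_p-s^{-4}q^2)f(\sigma_p)=0$ without computing any powers of $\sigma_p$. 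The only input you still need from the explicit formulas --- that the coupling hits only $w_{p,p+1}$ --- is indeed visible in \eqref{eq:action_vec_b} and in the action listed in Section~\ref{sec:Nn2}. What your approach buys is uniformity (one argument for both representations and all generators, no computer verification); what the paper's buys is complete explicitness at the level of matrix entries. Both proofs conclude minimality the same way, from the facts that the three eigenvalues are pairwise distinct for $r\geq 5$ and already all occur on the subrepresentation $\WH_{n,2}$ for $n\geq 3$.
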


The proof of Lemma~\ref{lem:min_pol} can be found in Appendix~\ref{app:min_pol}. We now proceed to the main statement of this section.

\begin{prop}
Let $n \in \N_{\geq 3}$, $r \in \N_{\geq 5}$.
The representations $\NH_{n,2,0}$ and $\NH_{n,2,1}$ are not faithful. In particular, we have that for every $1 \leq i < n$:
\[
\begin{array}{rl}
\langle \sigma_i^r \rangle \subset \ker \rho^{\NH}_{n,2}, & \text{ if } r \text{ is even,}\\
\langle \sigma_i^{2r} \rangle \subset \ker \rho^{\NH}_{n,2}, & \text{ if } r \text{ is odd.}
\end{array}
\]
\end{prop}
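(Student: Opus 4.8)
The plan is to deduce everything from Lemma~\ref{lem:min_pol}, which pins down the minimal polynomial of each braid generator, and then to extract an explicit power of $\sigma_i$ lying in the kernel by pure linear algebra over $\C$.

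First I would record what Lemma~\ref{lem:min_pol} gives: for $n\in\N_{\geq 3}$ and $r\in\N_{\geq 5}$ the matrix $M_i:=\rho^{\NH}_{n,2}(\sigma_i)$ of each generator $\sigma_i$ (for both $\NH_{n,2,0}$ and $\NH_{n,2,1}$) satisfies $p(M_i)=0$, where $p(X)=(X-1)(X+s^{-2})(X-s^{-4}q^2)$. Using $s=q^{r-1}$, $q^{2r}=1$ and $q^r=-1$ one has $-s^{-2}=q^{r+2}$ and $s^{-4}q^2=q^6$, and the hypothesis $r\geq 5$ is precisely what makes the three roots $1,\,q^{r+2},\,q^6$ pairwise distinct. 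Hence $M_i$ is diagonalisable over $\C$ and every eigenvalue of $M_i$ lies in $\{1,q^{r+2},q^6\}$; in particular each eigenvalue is a root of unity, so $M_i$ has finite multiplicative order.

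Next I would compute a uniform annihilating exponent, splitting on the parity of $r$ and using only $q^{2r}=1$ and $q^r=-1$. For the eigenvalue $q^6$ one has $(q^6)^r=(q^{2r})^3=1$ and $(q^6)^{2r}=1$. For the eigenvalue $q^{r+2}$ one has $(q^{r+2})^{2r}=(q^{2r})^{r+2}=1$ always, whereas $(q^{r+2})^r=q^{r^2+2r}=q^{r^2}$, which equals $1$ exactly when $2r\mid r^2$, i.e.\ exactly when $r$ is even; for odd $r$ one instead gets $q^{r^2}=q^{r}=-1\neq 1$. Therefore $M_i^{r}$ has all eigenvalues equal to $1$ when $r$ is even, and $M_i^{2r}$ has all eigenvalues equal to $1$ when $r$ is odd. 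Since $M_i$ is diagonalisable, a power of it all of whose eigenvalues are $1$ is the identity matrix, so $\sigma_i^{r}\in\ker\rho^{\NH}_{n,2}$ for $r$ even and $\sigma_i^{2r}\in\ker\rho^{\NH}_{n,2}$ for $r$ odd, and with each such element the whole cyclic group it generates.

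Finally, to conclude that the representations are genuinely not faithful I would note that this kernel is non-trivial: the abelianisation homomorphism $B_n\to\Z$ sends every $\sigma_i$ to $1$, hence sends $\sigma_i^{r}$ (resp.\ $\sigma_i^{2r}$) to $r\neq 0$ (resp.\ $2r\neq 0$), so these elements and the infinite cyclic subgroups they generate are non-trivial in $B_n$. The only non-routine ingredient here is Lemma~\ref{lem:min_pol} itself; once the minimal polynomial is in hand the argument is elementary, and the one place deserving care is the parity split, where the ``obvious'' exponent $r$ fails for odd $r$ precisely because $q^{r^2}=q^r=-1$, which is exactly why one must pass to $2r$.
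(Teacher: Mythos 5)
Your argument is correct and follows essentially the same route as the paper: both invoke Lemma~\ref{lem:min_pol} to get the cubic minimal polynomial with distinct roots $1$, $q^{6}$, $q^{r+2}$ for $r\geq 5$, deduce diagonalisability, and then check that the $r$-th (resp.\ $2r$-th) powers of the eigenvalues are $1$ according to the parity of $r$. The only additions beyond the paper's proof are the (correct but routine) remark via the abelianisation that $\sigma_i^{r}$ and $\sigma_i^{2r}$ are non-trivial in $B_n$, and working with each $\sigma_i$ directly instead of reducing to $\sigma_1$ by conjugacy.
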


\begin{proof}
We prove the statement for $\sigma_1 \in B_n$; then it follows immediately for the rest of the generators of $B_n$, since they are all conjugate to $\sigma_1$. By Lemma~\ref{lem:min_pol} the minimal polynomial for the matrix of $\sigma_1$ in the representation $\NH_{n,2,0}$ or $\NH_{n,2,1}$ is the cubic polynomial $p(X)$, for $r \geq 5$. By the discussion above the lemma, the eigenvalues are distinct complex numbers for $r \geq 5$ and the minimal polynomial is factorized into linear factors. Therefore, there exists a basis, in which the matrix for $\sigma_1$ is a diagonal matrix with the eigenvalues as diagonal entries (which may be repeated according to their multiplicities).

Since the eigenvalues ($1$, $q^6$ and $q^{r+2}$) are $2r$-roots of unity, it is immediate that $\langle \sigma_1^{2r} \rangle \subset \ker \rho^{\NH}_{n,2}$. In the case that $r$ is even, we write $r=2r'$, for some $r' \in \Np$. We have obviously that $q^{6r} = 1$. Further, $q^{r(r+2)} = q^{2r'(2r'+2)} = q^{4r'(r'+1)} = 1$, which shows that $\langle \sigma_1^r \rangle \subset \ker \rho^{\NH}_{n,2}$, if $r$ is even.
\end{proof}

\begin{rem}
Note that the representations $\NH_{2,2,0}$ and $\NH_{2,2,1}$ are faithful by Theorem~\ref{thm:faithful_center}, since $B_2 = Z(B_2)$.
\end{rem}

\section{Generalization to 3-variable representations} \label{sec:3var}

After having explicitly described the $B_n$-action for $\ell=2$, a natural question is whether these representations can be generalized to $2$-variable representations extending the LKB representation. In this section, we prove that they can actually be generalized to $3$-variable representations. We focus mainly on the representation extending the trivial representation by the LKB representation, for which we prove that it splits as a direct sum if we extend the defining ring. Further, we study a certain specialization of this representation at roots of unity. Finally, we study the restriction of $\NH_{n,2,0}$ on $B_{n-1}$.

Let
\[
\Lb := \Z[q^{\pm 1}, s^{\pm 1}] \andq \Lf := \Q(q,s).
\]
Note that $\Lf$ is the field of fractions of $\Lb$. Recall also that the representations $\Wb_{n,\ell}$ are irreducible over $\Lf$ \cite{JK}.

\subsection{The 3-variable generalizations}
In this section we define the representations $\widetilde{\NH}_{n,2,0}$ and $\widetilde{\NH}_{n,2,1}$ generalizing $\NH_{n,2,0}$ and $\NH_{n,2,1}$ respectively. The representation $\widetilde{\NH}_{n,2,0}$ extends $\WH_{n,0}$ by $\Wb_{n,2}$ (note that $\Wb_{n,0} \cong \WH_{n,0}$ since both are the trivial representation) and the representation $\widetilde{\NH}_{n,2,0}$ extends $\Wb_{n,1}$ by $\Wb_{n,2}$. In both cases we let $q$ and $s$ to be variables and introduce a third variable $t$.

\begin{prop} \label{prop:Nn20}
Let $\widetilde{\NH}_{n,2,0}$ be a free $\Lb[t]$-module spanned by $\binom{n}{2}+1$ vectors denoted by $b, w_{1,2}, \ldots, w_{1,n}, w_{2,3}, \ldots, w_{2,n}, \ldots, w_{n-1,n}$. The space $\widetilde{\NH}_{n,2,0}$ is a $B_n$-representation where the $B_n$-action on the vectors $w_{i,j}$ is defined by \eqref{eq:LKB} and on the vector $b$ by:
\begin{equation} \label{eq:sigma_i_b}
\sigma_i b = b + t \, w_{i,i+1} .
\end{equation}
\end{prop}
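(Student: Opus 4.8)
The plan is to show that the assigned operators $\sigma_1,\dots,\sigma_{n-1}$ are invertible over $\Lb[t]$ and satisfy the braid relations; by the presentation of $B_n$ this is all that is needed. With respect to the ordered basis $(b,w_{1,2},w_{1,3},\dots,w_{n-1,n})$ each $\sigma_i$ is block lower triangular: it preserves the free submodule $W:=\spn\{w_{j,k}\}$, on which it acts by the matrix $L_i$ of the $2$-variable LKB representation $\Wb_{n,2}$ over $\Lb$ (this is exactly the content of \eqref{eq:LKB}), while on the quotient line it acts as the identity, since $\sigma_i b\equiv b\bmod W$ by \eqref{eq:sigma_i_b}. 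Invertibility is then automatic, because $\det\sigma_i=\det L_i$ is a unit of $\Lb$. Multiplying out the block matrices, the diagonal blocks $1$ and $L_i$ already obey the braid relations (the $L_i$ because $\Wb_{n,2}$ is a representation), so the proposition reduces to the corresponding identities for the off-diagonal data, i.e. to verifying that the $W$-valued $1$-cochain $c_i:=\sigma_i b-b=t\,w_{i,i+1}$ satisfies the cocycle conditions
\[
c_i+\sigma_i c_j=c_j+\sigma_j c_i\quad(|i-j|>1),\qquad
c_i+\sigma_i c_{i+1}+\sigma_i\sigma_{i+1}c_i=c_{i+1}+\sigma_{i+1}c_i+\sigma_{i+1}\sigma_i c_{i+1},
\]
where on the right-hand sides the $\sigma$'s act on $W$ via \eqref{eq:LKB}.

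The far-commutation case is immediate: for $|i-j|>1$ one has $\{i,i+1\}\cap\{j,j+1\}=\emptyset$, so the first line of \eqref{eq:LKB} gives $\sigma_i c_j=c_j$ and $\sigma_j c_i=c_i$, and the first identity collapses to $c_i+c_j=c_j+c_i$. For the braid relation the plan is a direct computation with \eqref{eq:LKB}. First I would record $\sigma_i c_{i+1}=t\,s^{-1}w_{i,i+2}$ and $\sigma_{i+1}c_i=t\bigl(s^{-1}w_{i,i+2}+(1-s^{-2})w_{i,i+1}-(1-s^{-2})w_{i+1,i+2}\bigr)$, then apply \eqref{eq:LKB} once more and observe that the $(1-s^{-2})$-terms cancel, leaving $\sigma_i\sigma_{i+1}c_i=t\,s^{-2}w_{i+1,i+2}$ and $\sigma_{i+1}\sigma_i c_{i+1}=t\,s^{-2}w_{i,i+1}$; both sides of the cocycle identity then equal $t\bigl(w_{i,i+1}+s^{-1}w_{i,i+2}+s^{-2}w_{i+1,i+2}\bigr)$, which settles the braid relation and hence the proposition.

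The only delicate point is the bookkeeping: one must apply the correct one of the six cases of \eqref{eq:LKB} to each of $\sigma_i w_{i,i+1}$, $\sigma_{i+1}w_{i,i+1}$, $\sigma_i w_{i+1,i+2}$, $\sigma_i w_{i,i+2}$ and $\sigma_{i+1}w_{i,i+2}$ (for instance $\sigma_{i+1}w_{i,i+1}$ falls under the last line of \eqref{eq:LKB} with the generator index playing the role of the second lower index of the vector). Once this is arranged the computation telescopes cleanly, so no substantial obstacle remains; an alternative worth using only as a sanity check is to specialize $q,s,t$ as in \eqref{eq:action_vec_b} and compare with the representation $\NH_{n,2,0}$ constructed in Section~\ref{sec:N20}.
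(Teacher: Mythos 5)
Your proposal is correct and follows essentially the same route as the paper's proof in Appendix~\ref{app:Nn20}: both reduce the claim to checking the braid relations on the vector $b$ (the relations on the $w_{i,j}$ being those of $\Wb_{n,2}$), dispose of the far commutation trivially, and verify the braid relation by a direct computation with \eqref{eq:LKB} in which both sides collapse to $t\,(w_{i,i+1}+s^{-1}w_{i,i+2}+s^{-2}w_{i+1,i+2})$. Your cocycle phrasing and the block-triangular determinant argument for invertibility over $\Lb[t]$ are only cosmetic variations on the paper's explicit computation of $\sigma_i^{-1}b=-s^4q^{-2}t\,w_{i,i+1}+b$.
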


To prove the result, one needs to check that the braid group relations hold for the vector $b$. The complete proof can be found in Appendix~\ref{app:Nn20}. Note that the representation $\widetilde{\NH}_{n,2,0}$ is faithful since its subrepresentation $\Wb_{n,2}$ is faithful \cite{Bi1,Kr2}. 

Analogously we define as follows the representation $\widetilde{\NH}_{n,2,1}$.

\begin{prop} \label{prop:Nn21}
Let $n \in \Npp$ and let $\widetilde{\NH}_{n,2,1}$ be a free $\Lb[t^{\pm 1}]$-module spanned by $\binom{n}{2}+n-1$ vectors $b'_1, \ldots, b'_{n-1}, w_{1,2}, \ldots, w_{1,n}, w_{2,3}, \ldots, w_{2,n}, \ldots, w_{n-1,n}$. The space $\widetilde{\NH}_{n,2,1}$ is a $B_n$-representation where the $B_n$-action on the vectors $w_{i,j}$ is defined by \eqref{eq:LKB} and on the vectors $b'_j$ by:
\begin{align*}
\sigma_i b'_j &= b'_j & \text{ for } j \neq i, i+1,
\\
\sigma_i b'_i &= s^{i-n} t \, w_{i,i+1} + (1-s^{-2}) \, b'_i + s^{-1} \, b'_{i+1} & \text{ for } i \neq n-1,
\\
\sigma_i b'_{i+1} &= s^{-1} \, b'_i & \text{ for } i \neq n-1,
\\
\sigma_{n-1} b'_{j} &= b'_{j} - s^{n-j-1} \, b'_{n-1} & \text{ for } j \neq n-1,
\\
\sigma_{n-1} b'_{n-1} &= s^{-1} t \, w_{n-1,n} - s^{-2} \, b'_{n-1}.
\end{align*}
\end{prop}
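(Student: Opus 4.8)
The proof proceeds exactly as for Proposition~\ref{prop:Nn20} (see Appendix~\ref{app:Nn20}). The relations among the $w_{i,j}$ hold because \eqref{eq:LKB} is the LKB representation $\Wb_{n,2}$ \cite{JK}, and the displayed formulas send each $b'_j$ into $\spn\{b'_1,\dots,b'_{n-1}\} \oplus \spn\{w_{1,2},\dots,w_{n-1,n}\}$; thus $\widetilde{\NH}_{n,2,1}$ is an extension of candidate $B_n$-modules $0 \to \Wb_{n,2} \to \widetilde{\NH}_{n,2,1} \to Q \to 0$ with $Q = \spn\{\bar b'_1,\dots,\bar b'_{n-1}\}$, and all that must be checked is that the operators $\sigma_i$, as defined on the $b'_j$, satisfy the far-commutation relations $\sigma_i\sigma_j = \sigma_j\sigma_i$ for $|i-j|>1$ and the braid relations $\sigma_i\sigma_{i+1}\sigma_i = \sigma_{i+1}\sigma_i\sigma_{i+1}$.

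The decisive observation is that $t$ occurs linearly and enters only through the single ``jump'' $b'_j \mapsto (\text{scalar})\, t\, w_{k,k+1}$, after which every further $\sigma_k$ acts on the resulting $w$-vector by the $t$-free formulas \eqref{eq:LKB}. Hence, applied to any $b'_j$, each word of length $\le 3$ in the generators produces a vector whose coordinates are affine (degree $\le 1$) in $t$. The degree-$0$ part of every braid-relation identity is the identity for the direct sum $Q \oplus \Wb_{n,2}$, where $Q$ with the induced action is a $B_n$-representation — it is isomorphic to a specialization of the reduced Burau representation at $\ts = s^{-2}$; cf.\ the $\ell=1$ discussion and Proposition~\ref{prop:exseq1} — and $\Wb_{n,2}$ is the LKB representation; both are $B_n$-representations over $\Lb$, so the degree-$0$ part holds identically. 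It remains to check that the degree-$1$-in-$t$ coefficient of each braid-relation identity, evaluated on each $b'_j$, vanishes in $\Wb_{n,2}$; equivalently, that $\sigma_i \mapsto \bigl(t^{-1} \times \text{the } w\text{-component of } \sigma_i(-)\bigr)$ is a $1$-cocycle on $B_n$ with values in $\Wb_{n,2}$, twisted on the source by the $Q$-action.

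For $i < n-1$ the generator $\sigma_i$ moves only $b'_i$ and $b'_{i+1}$ and emits only $w_{i,i+1}$, while $\sigma_{n-1}$ moves every $b'_j$ but emits only $w_{n-1,n}$; so the cocycle check is local and comes down to finitely many explicit identities — the far-commuting pairs, the generic triples $(\sigma_i, \sigma_{i+1})$ with $i+1 < n-1$, and the genuinely delicate case, the triple $(\sigma_{n-2}, \sigma_{n-1})$ applied to $b'_{n-2}$ and $b'_{n-1}$. In this last case the ``global'' action of $\sigma_{n-1}$ on all the $b'_j$ interacts with the local action of $\sigma_{n-2}$, and one must expand $\sigma_{n-2} w_{n-1,n}$ and $\sigma_{n-1} w_{n-2,n-1}$ through \eqref{eq:LKB} and keep track of every coefficient; I expect this to be the main obstacle. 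Once it is settled, all braid relations hold over $\Lb[t^{\pm 1}]$ and the proposition follows. As a consistency check, specializing $t = s^{-3}(1-q^2)$ with $q = e^{\pi \im/r}$ and $s = q^{r-1}$ recovers the formulas of $\NH_{n,2,1}$ from Section~\ref{sec:Nn2}, which is a $B_n$-representation by construction.
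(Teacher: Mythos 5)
Your reduction of the problem to a twisted $1$-cocycle condition is a sound and even clean reformulation: since $t$ enters linearly and only through the single emission $b'_j \mapsto (\text{scalar})\,t\,w_{k,k+1}$, every braid-relation identity does split into a degree-$0$ part (the braid relations for the quotient $Q$, which is the reduced Burau representation with parameter $s^{-2}$, and for $\Wb_{n,2}$) and a degree-$1$ part (the cocycle identity $c(\sigma_i\sigma_{i+1}\sigma_i)=c(\sigma_{i+1}\sigma_i\sigma_{i+1})$ and $c(\sigma_i\sigma_j)=c(\sigma_j\sigma_i)$ with values in $\Wb_{n,2}$, twisted on the right by the $Q$-action). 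This matches the structure of the paper's argument, which likewise observes that the $b'$-coefficients agree on both sides of each relation and that only the $w$-components need comparing.

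The gap is that you stop exactly where the proof begins. The entire content of the proposition is the verification that the degree-$1$ coefficients vanish, and you explicitly defer it (``I expect this to be the main obstacle. Once it is settled\dots''). Nothing in your argument establishes, for instance, that in the critical case the $w$-components of $\sigma_{n-2}\sigma_{n-1}\sigma_{n-2}\,b'_{n-2}$ and $\sigma_{n-1}\sigma_{n-2}\sigma_{n-1}\,b'_{n-2}$ agree; the paper does this by expanding both sides and reducing the comparison to the identity
\[
\sigma_i \sigma_{i+1} w_{i,i+1} + \sigma_i w_{i+1,i+2} + w_{i,i+1} = \sigma_{i+1} \sigma_i w_{i+1,i+2} + \sigma_{i+1} w_{i,i+1} + w_{i+1,i+2},
\]
already established in the proof of Proposition~\ref{prop:Nn20}, together with $\sigma_{i+1}\sigma_i w_{i+1,i+2}=s^{-2}w_{i,i+1}$. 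The same applies to the far-commutation checks involving $\sigma_{n-1}$, which acts nontrivially on \emph{every} $b'_j$: these are finitely many explicit identities, but they must actually be computed, not merely located. Your closing ``consistency check'' (specializing $t=s^{-3}(1-q^2)$ at roots of unity recovers $\NH_{n,2,1}$) does not close the gap either, since $\NH_{n,2,1}$ is only a representation when $n\equiv -2 \bmod r$, so it certifies the identities only at a restricted family of parameter values rather than identically in $\Lb[t^{\pm1}]$. To complete the proof you must carry out the case-by-case verification of the degree-$1$ coefficients, as in Appendix~\ref{app:Nn21}.
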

To prove the theorem one needs to check that the braid group relations are satisfied for the vectors $b'_m$, with $1 \leq m \leq n-1$. The complete proof can be found in Appendix~\ref{app:Nn21}. 

\subsection{Specializations of \texorpdfstring{$\widetilde{\NH}_{n,2,0}$}{NH(n,2,0)} at roots of unity} \label{sec:gen_N20}

Let $r \in \N_{\geq 3}$, $q=e^{\pi \im / r}$, $s = q^{r-1}$ and $t = s^{-3} (1-q^2)$ and let $\widetilde{\NH}_{n,2,0}(q,s,t)$ be corresponding specialization of the representation $\widetilde{\NH}_{n,2,0}$. By Prop.~\ref{prop:Nn20} the representation $\NH_{n,2,0}$ extends $\WH_{n,0}$ non trivially by $\WH_{n,2}$ if and only if $n \equiv -1 \mod r$. On the other hand the $B_n$-representation $\widetilde{\NH}_{n,2,0}(q,s,t)$ is defined for any $n \in \Npp$ and is isomorphic to $\NH_{n,2,0}$ for $n \equiv -1 \mod r$. Therefore, the representation $\widetilde{\NH}_{n,2,0}(q,s,t)$ could provide us a non trivial extension of $\WH_{n,0}$ by $\WH_{n,2}$ for any $n \in \Npp$. The goal of this section is to prove the following.

\begin{thm} \label{thm:Nn2_split}
Let $n \in \Npp$, $r \in \N_{\geq 3}$ and let $q=e^{\pi \im / r}$, $s = q^{r-1}$ and $t = s^{-3} (1-q^2)$. The short exact sequence of $B_n$-modules
\[
\begin{array}{ccccccccc}
0 & \longrightarrow & \WH_{n,2} & \lhook\joinrel\longrightarrow & \widetilde{\NH}_{n,2,0}(q,s,t) &\longrightarrow  & \widetilde{\NH}_{n,2,0}(q,s,t) / \WH_{n,2} & \longrightarrow & 0
\end{array}
\]
does not split if and only if $n \equiv -1 \mod r$.
\end{thm}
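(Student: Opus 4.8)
The plan is to reduce the splitting question to a single explicit linear system and solve it. \emph{First}, I would observe that inside $\widetilde{\NH}_{n,2,0}(q,s,t)$ the span of the vectors $w_{i,j}$ is precisely the subrepresentation $\WH_{n,2}$ (its $B_n$-action being \eqref{eq:LKB}), while the quotient is one-dimensional and trivial, spanned by the class of $b$, since $\sigma_i b \equiv b$ modulo $\WH_{n,2}$ by \eqref{eq:sigma_i_b}. Hence a splitting amounts to a $B_n$-fixed vector $v = b + w$ with $w \in \WH_{n,2}$; writing $w = \sum_{1\le i<j\le n} c_{i,j}\,w_{i,j}$, this is equivalent to the solvability of
\[
(\sigma_k - \1)\,w = -t\,w_{k,k+1}, \qquad k = 1,\dots,n-1,
\]
with $t = s^{-3}(1-q^2)$, $s = q^{r-1}$, $q = e^{\pi\im/r}$.

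\emph{Second}, I would solve this system using the explicit LKB formulas \eqref{eq:LKB}. For each $k$, matching the coefficients of the basis vectors $w_{k,q}$ with $q>k+1$ and $w_{q,k+1}$ with $q<k$ on the two sides (the right side having only a $w_{k,k+1}$-component) yields the chain relations $c_{p,q} = s\,c_{p+1,q}$ and $c_{p,q} = s^{-1}c_{p,q-1}$; for $n \ge 3$ these rigidly force $c_{i,j} = s^{\,3-i-j}\,e$ for a single scalar $e$ (for $n=2$ there is simply one free scalar $e = c_{1,2}$). Substituting this form into the remaining equations --- the coefficients of $w_{k,k+1}$ --- and evaluating the two geometric sums that occur (with $x=s^{-2}$), I expect the $k$-dependent terms to telescope away, collapsing all $n-1$ equations to the single relation
\[
e\,(q^2 s^{-2n} - 1) = -\,s^{-3}(1-q^2).
\]

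\emph{Finally}, since $q$ has order $2r$ and $s=q^{r-1}$, one has $q^2 s^{-2n} = q^{2(n+1)}$, which equals $1$ if and only if $r \mid n+1$, i.e.\ $n \equiv -1 \bmod r$. Thus when $n \not\equiv -1 \bmod r$ the factor $q^2 s^{-2n}-1$ is nonzero, $e$ is uniquely determined, the system has a solution, and the sequence splits; when $n \equiv -1 \bmod r$ the left-hand coefficient vanishes while $-s^{-3}(1-q^2)\ne 0$ (as $q^2\ne 1$ for $r\ge 2$), and since every solution was forced into the above form there is no solution and the sequence does not split --- in agreement with the fact that for $n\equiv -1 \bmod r$ one has $\widetilde{\NH}_{n,2,0}(q,s,t)\cong \NH_{n,2,0}$, a non-split extension by Theorem~\ref{thm:thm1}. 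I expect the main obstacle to be the bookkeeping of the second step: carefully expanding $(\sigma_k-\1)w$ through \eqref{eq:LKB}, checking that the off-diagonal equations pin down the $c_{i,j}$ up to one scalar, and verifying the telescoping that reduces the diagonal equations to the clean divisibility condition $q^2 s^{-2n}=1$.
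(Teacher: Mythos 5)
Your proposal is correct and follows essentially the same route as the paper: the paper first proves (Proposition~\ref{prop:Nn20_split_spec}) that for general $q,s,t\in\C^\times$ a splitting is equivalent to solving $(\sigma_k-\1)w=-t\,w_{k,k+1}$, derives the same chain relations forcing $\lambda_{i,j}=s^{2n-i-j-1}\lambda_{n-1,n}$ (your $c_{i,j}=s^{3-i-j}e$ up to rescaling the free parameter), and reduces to the single condition $q^2\neq s^{2n}$ when $s^2\neq 1$, which the theorem then translates into $n\not\equiv -1\bmod r$ exactly as you do. The only cosmetic difference is that the paper isolates the general-parameter computation as a separate proposition (also covering the $s^2=1$ case, which is irrelevant here since $r\geq 3$ forces $s^2\neq 1$).
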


To do that we start by investigating under which conditions the representation $\widetilde{\NH}_{n,2,0}(q, s, t)$ splits as a direct sum of the specialized LKB and the trivial representations for any $q, s, t \in \C^\times$.

\begin{prop} \label{prop:Nn20_split_spec}
Let $n \in \Npp$ and $q, s, t  \in \C^\times$. The short exact sequence of $B_n$-modules
\[
\begin{array}{ccccccccc}
0 & \longrightarrow & \Wb_{n,2}(q,s) & \lhook\joinrel\longrightarrow & \widetilde{\NH}_{n,2,0}(q,s,t) &\longrightarrow  &  \widetilde{\NH}_{n,2,0}(q,s,t) / \Wb_{n,2}(q,s) & \longrightarrow & 0
\end{array}
\]
splits if and only if
\[
s^2 = 1 \; \text{ and } \; q^2 \neq 1,  \qquad \text{ or } \qquad
s^2 \neq 1 \; \text{ and } \; q^2 \neq s^{2n}.
\]
\end{prop}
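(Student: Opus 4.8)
The plan is to convert the splitting question into the solvability of an explicit linear system over $\C$. A $B_n$-equivariant section of $\widetilde{\NH}_{n,2,0}(q,s,t)\to\widetilde{\NH}_{n,2,0}(q,s,t)/\Wb_{n,2}(q,s)$ is precisely a vector $v$ in the LKB subspace $\Wb_{n,2}(q,s)=\operatorname{span}\{w_{k,l}\}$ with $b+v$ fixed by every $\sigma_i$; by \eqref{eq:sigma_i_b} and \eqref{eq:LKB} this means $(\sigma_i-\operatorname{id})v=-t\,w_{i,i+1}$ for $i=1,\dots,n-1$. Since $t\in\C^{\times}$ we may rescale $v$ by $t$, so splitting depends only on $(q,s)$ — equivalently the extension defines a class in $H^{1}(B_n,\Wb_{n,2}(q,s))$ and we ask when it vanishes. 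From \eqref{eq:LKB}, $\sigma_i$ fixes $w_{j,k}$ for $\{i,i+1\}\cap\{j,k\}=\emptyset$ and scales $w_{i,i+1}$ by $s^{-4}q^{2}$, so the system couples only neighbouring basis vectors and can be attacked by a recursion on the indices.

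The engine is the action of the centre $Z(B_n)=\langle\theta_n\rangle$ with $\theta_n=\Delta_n^{2}=\delta_n^{\,n}$, $\delta_n=\sigma_1\cdots\sigma_{n-1}$. A short induction from \eqref{eq:sigma_i_b} and \eqref{eq:LKB} gives $\delta_n b=b+t\,W$ with $W:=\sum_{l=2}^{n}s^{2-l}w_{1,l}\in\Wb_{n,2}(q,s)$, hence $\theta_n b=b+t\sum_{k=0}^{n-1}\delta_n^{\,k}W$. On $\Wb_{n,2}$, which is absolutely irreducible over $\Lf$ by \cite{JK}, the central $\theta_n$ acts by a Laurent monomial in $q,s$; evaluating the one-dimensional case $n=2$ (where $\sigma_1w_{1,2}=s^{-4}q^{2}w_{1,2}$), or using \cite[Lemma 3.2]{Kr2}, or specialising Theorem~\ref{thm:faithful_center}, identifies this monomial as $q^{4}s^{-4n}$. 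So $\theta_n$ is the identity on $\Wb_{n,2}(q,s)$ exactly when $q^{4}=s^{4n}$; in that case $\delta_n$ has order dividing $n$ on $\Wb_{n,2}(q,s)$, so $\sum_{k=0}^{n-1}\delta_n^{k}=n\pi$ for the projector $\pi$ onto $\delta_n$-invariants, and $\theta_n b=b+tn\,\pi(W)$.

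Now the two implications. If $q^{4}\neq s^{4n}$, then $\theta_n$ acts on $\Wb_{n,2}(q,s)$ by a scalar $\neq1$; because conjugation by a central element acts trivially on group cohomology, $(q^{4}s^{-4n}-1)H^{1}(B_n,\Wb_{n,2}(q,s))=0$, so $H^{1}=0$ and the sequence splits — this handles every $(q,s)$ with $q^{2}\neq\pm s^{2n}$. On the locus $q^{2}=s^{2n}$ one has $\theta_n=\operatorname{id}$ on $\Wb_{n,2}$, so a splitting would give $\theta_n b=b$, contradicting $\theta_n b=b+tn\,\pi(W)$ once $\pi(W)\neq0$; I would check $\pi(W)\neq0$ by a direct computation with \eqref{eq:LKB}, or by pairing $W$ against an explicit $\delta_n$-invariant functional, so there is no splitting there. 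The remaining split case $q^{2}=-s^{2n}$ with $q^{2}\neq s^{2n}$, where $\theta_n$ still acts by $1$ and the cohomological shortcut is unavailable, I would settle by solving the neighbour-coupled system of the first paragraph by hand: its coefficients are pinned down up to the single denominator $q^{2}+s^{2n}$, which is nonzero here, so a genuine $v$ exists. Finally I would observe that ``$s^{2}=1$ and $q^{2}\neq1$'' is just the substitution $s^{2n}=1$ into ``$q^{2}\neq s^{2n}$'', so the dichotomy in the statement coincides with $q^{2}\neq s^{2n}$; when $s^{2}=1$ the closed-form solution degenerates and must be re-derived, which is why it is listed separately.

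The main obstacle is the hands-on analysis of the LKB system: proving $\pi(W)\neq0$ exactly on $q^{2}=s^{2n}$, and producing the explicit solution $v$ on the complementary locus (and again when $s^{2}=1$). Everything else — the scalar $q^{4}s^{-4n}$, the cohomological vanishing, and the reduction of non-splitting to $\theta_n b\neq b$ — is formal once that computation is in hand.
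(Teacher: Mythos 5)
Your reduction of splitting to solving $(\sigma_k-1)v=-t\,w_{k,k+1}$ is exactly the paper's starting point, and your cohomological shortcut --- the center $\langle\theta_n\rangle$ acts on $\Wb_{n,2}(q,s)$ by the monomial $q^4s^{-4n}$, and a central element acting by a scalar $\neq 1$ kills $H^1(B_n,\Wb_{n,2}(q,s))$ --- is correct and genuinely different from the paper, which instead solves the linear system directly via the recursions \eqref{eq:system_0_1}--\eqref{eq:system_0_3}. That argument cleanly disposes of the locus $q^2\neq\pm s^{2n}$. However, the proof is not complete, because both remaining cases are deferred to computations you do not carry out, and one of them is set up incorrectly. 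For the non-splitting direction on $q^2=s^{2n}$ you need $\pi(W)\neq 0$ for $W=\sum_{l=2}^n s^{2-l}w_{1,l}$ and $\pi=\frac1n\sum_k\delta_n^k$; this is the entire content of that direction (for $n=2$ and $q^2=-s^4$ one has $\pi(W)=0$, so the claim is genuinely locus-dependent and cannot be waved through), and "I would check" leaves the key step unproved. Worse, on the locus $q^2=-s^{2n}$ you assert that the explicit solution of the neighbour-coupled system has the single denominator $q^2+s^{2n}$ and that this "is nonzero here" --- but $q^2+s^{2n}$ vanishes identically on that locus. The correct denominator, obtained from $\lambda_{i,j}=s^{2n-i-j-1}\lambda_{n-1,n}$ and \eqref{eq:system_0_An1}, is $s^{2n}-q^2$ (up to a unit), which is indeed nonzero when $q^2=-s^{2n}$; as written, though, your argument would conclude the opposite of what you want exactly where the cohomological shortcut is unavailable. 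The degenerate case $s^2=1$ is likewise postponed. Since the explicit linear algebra you defer is precisely the paper's whole proof, the proposal as it stands establishes only the generic part of the statement.
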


\begin{proof}
We realise the trivial representation $\WH_{n,0} \simeq \widetilde{\NH}_{n,2,0}(q,s,t) / \Wb_{n,2}(q,s)$ as a direct summand of $\widetilde{\NH}_{n,2,0}(q,s,t)$. For this, we find a change of basis of $\widetilde{\NH}_{n,2,0}(q,s,t)$ such that the $B_n$-action on $\WH_{n,0}$ is closed. That is, we find a $w = \sum_{1 \leq i < j \leq n} \lambda_{i,j} \, w_{i,j} \in \Wb_{n,2}(q,s)$, where $\lambda_{i,j} \in \C$, such that \mbox{$(\sigma_k -1)(b + w) = 0$}, for $1 \leq k \leq n-1$.

Due to \eqref{eq:sigma_i_b} the vector $w$ should satisfy the equations
\begin{equation}\label{eq:sigma_k_w}
(\sigma_k -1)w = - (\sigma_k -1) b = -t \, w_{k,k+1} \forq 1 \leq k \leq n-1.
\end{equation}
Let the set
\[
I_k := \{ (i,j) \mid 1 \leq i < j \leq n -1 \text{ and } \{ i, j \} \cap \{ k , k+1 \} = \emptyset  \}.
\]
For $1 \leq k \leq n-1$, we have that:
\begin{align*}
&\sigma_k w = \sum_{1 \leq i < j \leq n} \lambda_{i,j} \, \sigma_k w_{i,j}
= \sum_{(i,j) \in I_k} \lambda_{i,j} \, \sigma_k w_{i,j}
+ \sum_{j=k+2}^n \lambda_{k+1,j} \, \sigma_k w_{k+1,j}
\\
&+ \sum_{i=1}^{k-1} \lambda_{i,k+1} \, \sigma_k w_{i,k+1}
+ \sum_{j=k+2}^n \lambda_{k,j} \, \sigma_k w_{k,j}
+ \sum_{i=1}^{k-1} \lambda_{i,k} \, \sigma_k w_{i,k}
+ \lambda_{k,k+1} \, \sigma_k w_{k,k+1}.
\end{align*}
Note that the summands in the above expression correspond exactly to the cases (depending on $i,j$ and $k$) for the action of $\sigma_k$ on a vector $w_{i,j}$ \eqref{eq:LKB}. Therefore, by \eqref{eq:LKB} we get:
\begin{align*}
&\sigma_k w = w
+  s^{-1} \sum_{j=k+2}^n \left( \lambda_{k+1,j} - s^{-1} \lambda_{k,j} \right) w_{k,j}
+ s^{-1} \sum_{i=1}^{k-1} \left(  \lambda_{i,k+1} - s^{-1} \lambda_{i,k} \right) w_{i,k}
\\
&- \sum_{j=k+2}^n \left( \lambda_{k+1,j} - s^{-1} \lambda_{k,j} \right) w_{k+1,j}
- \sum_{i=1}^{k-1} \left( \lambda_{i,k+1} - s^{-1} \lambda_{i,k} \right) w_{i,k+1}
+ A_k w_{k,k+1},
\end{align*}
where, for $1 \leq k \leq n-1$,
\[
A_k := (s^{-4}q^2-1) \lambda_{k,k+1} - (1 - s^{-2}) \left( \sum_{i=1}^{k-1} s^{k-i-1} \lambda_{i,k} 
+ q^2 \sum_{j=k+2}^n s^{k-j-1}  \lambda_{k,j} \right).
\]
Due to \eqref{eq:sigma_k_w} we have for every $1 \leq k \leq n-1$:
\begin{align}
\lambda_{k+1,j} &= s^{-1} \lambda_{k,j} \forq k+2 \leq j \leq n, \label{eq:system_0_1}
\\
\lambda_{i,k+1} &= s^{-1} \lambda_{i,k} \forq 1 \leq i \leq k-1, \label{eq:system_0_2}
\\
A_k &= -t. \label{eq:system_0_3}
\end{align}
By \eqref{eq:system_0_1} and \eqref{eq:system_0_2} we have
\begin{equation} \label{eq:system_0_rec_1}
\lambda_{i,j} = s^{2n-i-j-1} \lambda_{n-1,n} \forq 1 \leq i < j \leq n.
\end{equation}
Therefore, it remains to investigate under which conditions Equations~\ref{eq:system_0_1}, \ref{eq:system_0_2} and \ref{eq:system_0_3} imply $\lambda_{n-1,n} \neq 0$. By \eqref{eq:system_0_3} for $k=n-1$ and \eqref{eq:system_0_rec_1} we have
\begin{equation} \label{eq:system_0_An1}
A_{n-1} = (s^{-4}q^2-1) \lambda_{n-1, n} - (1 - s^{-2}) \sum_{i=1}^{n-2} s^{2(n-i-1)} \lambda_{n-1,n} = -t.
\end{equation}
If $s^2 = 1$, then \eqref{eq:system_0_An1} becomes
\[
(q^2-1) \lambda_{n-1, n} = -t.
\]
The equation is satisfied if and only if $q^2 \neq 1$ and then we obtain
\[
\lambda_{n-1, n} = \frac{t}{1-q^2}.
\]
On the other hand, if $s^2 \neq 1$ then by \eqref{eq:system_0_An1} we have
\[
\begin{aligned}
A_{n-1} &= (s^{-4}q^2-1) \lambda_{n-1, n} - (1-s^{-2}) \frac{s^2 - s^{2(n-1)}}{1-s^2} \lambda_{n-1, n}
\\
&= (s^{-4}q^2 - s^{2n-4}) \lambda_{n-1,n} = -t.
\end{aligned}
\]
Since $s^2 \neq 1$, the equation is satisfied if and only if $q^2 \neq s^{2n}$ and then we obtain
\[
\lambda_{n-1,n} = \frac{s^4 t}{s^{2n}-q^2}.
\]

Therefore, the short exact sequence splits if and only if $s^2 = 1$ and $q^2 \neq 1$, or if $s^2 \neq 1$ and $q^2 \neq s^{2n}$.
\end{proof}

We are now ready to prove the main result of this section.
\begin{proof}[Proof of  Thm.~\ref{thm:Nn2_split}]
Recall that $r \in \N_{\geq 3}$, $q=e^{\pi \im / r}$, $s = q^{r-1}$ and $t = s^{-3} (1-q^2)$. We apply Prop.~\ref{prop:Nn20_split_spec} on $\widetilde{\NH}_{n,2,0}(q,s,t)$. Since $r \geq 3$ we have that $s^2 \neq 1$. Further:
\[
q^2 = s^{2n}
\; \Leftrightarrow \;
q^{2n+2} = 1
\; \Leftrightarrow \;
2n + 2 \equiv 0 \mod 2r
\; \Leftrightarrow \;
n \equiv -1 \mod r.
\]
Therefore, the short exact sequence of the statement does not split if and only if $n \equiv -1 \mod r$.
\end{proof}

Note that, Theorem~\ref{thm:Nn2_split} provides an alternative proof for the fact that $\NH_{n,2,0}$ does not split (cf. Theorem~\ref{thm:thm1}).

\begin{rem}
The representation $\NH_{n,2,0}$ is defined by Theorem~\ref{thm:Nnl} for $r \geq 3$, since $\ell=2$. Moreover, the short exact sequence of Theorem~\ref{thm:Nn2_split} is split for every $n \in \Npp$ when $r=2$ by Prop.~\ref{prop:Nn20_split_spec}, since $s^2=q^2=1$. Therefore, there is no analogous non trivial extension of $\WH_{n,0}$ by $\WH_{n,2}$ for $r=2$.
\end{rem}

\subsection{The representation \texorpdfstring{$\widetilde{\NH}_{n,2,0}$}{NH(n,2,0)} as a direct sum}
In this section we prove that the representation $\widetilde{\NH}_{n,2,0}$ splits a direct sum of the LKB and the trivial representations by adapting the proof of Prop.~\ref{prop:Nn20_split_spec}.

\begin{prop} \label{prop:Nn20_split}
Let $n \in \Npp$. The short exact sequence of $B_n$-modules
\[
\begin{array}{ccccccccc}
0 & \longrightarrow & \Wb_{n,2} & \lhook\joinrel\longrightarrow & \widetilde{\NH}_{n,2,0} &\longrightarrow  & \widetilde{\NH}_{n,2,0} / \Wb_{n,2} & \longrightarrow & 0,
\end{array}
\]
splits over $\Lf[t]$.
\end{prop}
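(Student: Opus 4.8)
The plan is to run the argument of Proposition~\ref{prop:Nn20_split_spec} over the ring $\Lf[t]$, the only new observation being that over $\Lf=\Q(q,s)$ the symbols $q$ and $s$ are algebraically independent, so the degenerate loci appearing in that proposition do not occur. Concretely, I will produce a $B_n$-invariant vector of the form $b+w$ with $w=\sum_{1\le i<j\le n}\lambda_{i,j}\,w_{i,j}$ and all $\lambda_{i,j}\in\Lf[t]$; then $\Lf[t]\cdot(b+w)$ is a $B_n$-submodule mapping isomorphically onto $\widetilde{\NH}_{n,2,0}/\Wb_{n,2}$, which exhibits the splitting.

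First I would, exactly as in Proposition~\ref{prop:Nn20_split_spec}, translate the requirement $(\sigma_k-1)(b+w)=0$ for $1\le k\le n-1$ — using \eqref{eq:sigma_i_b} for $b$ and \eqref{eq:LKB} for the $w_{i,j}$ — into the linear system \eqref{eq:system_0_1}--\eqref{eq:system_0_3}. Equations \eqref{eq:system_0_1} and \eqref{eq:system_0_2} force the relations \eqref{eq:system_0_rec_1}, namely $\lambda_{i,j}=s^{2n-i-j-1}\lambda_{n-1,n}$, after which the whole family \eqref{eq:system_0_3} reduces to the single scalar equation \eqref{eq:system_0_An1} for $\lambda_{n-1,n}$ (the quantities $A_k$ all collapse to $s^{-4}(q^2-s^{2n})\lambda_{n-1,n}$, independent of $k$). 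The key point is that $s^{2n}-q^2$ is a nonzero element of $\Lf$ — it is a nonzero polynomial in the independent variables $q,s$ — hence a unit of $\Lf$, and, being independent of $t$, a unit of $\Lf[t]$ as well. Therefore \eqref{eq:system_0_An1} has the solution
\[
\lambda_{n-1,n}=\frac{s^4\,t}{s^{2n}-q^2}\in\Lf[t],
\]
and all the remaining $\lambda_{i,j}=s^{2n-i-j-1}\dfrac{s^4 t}{s^{2n}-q^2}$ lie in $\Lf[t]$ too. The vector $b+w$ built from these coefficients is fixed by every $\sigma_k$, so it spans a $B_n$-stable line complementary to $\Wb_{n,2}$, and the short exact sequence splits over $\Lf[t]$.

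There is essentially no new obstacle beyond Proposition~\ref{prop:Nn20_split_spec}: the consistency of the system — that a single choice of $\lambda_{n-1,n}$ satisfies all $n-1$ equations $A_k=-t$ simultaneously — was already established there, and the only thing to stress here is the genericity of $q,s$ over $\Q$, which rules out $s^2=1$ and $q^2=s^{2n}$ and makes the denominator $s^{2n}-q^2$ invertible in $\Lf[t]$ (this is precisely why the statement is phrased over $\Lf[t]$ rather than over $\Lb[t]$, in which $s^{2n}-q^2$ is not a unit). The mildly delicate bookkeeping item, should one wish to spell it out, is re-deriving $A_k=s^{-4}(q^2-s^{2n})\lambda_{n-1,n}$ from \eqref{eq:system_0_rec_1} by a short geometric-series computation, but this is routine.
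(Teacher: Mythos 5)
Your proposal is correct and follows essentially the same route as the paper: the paper's proof likewise reruns the computation of Proposition~\ref{prop:Nn20_split_spec} with $q$, $s$, $t$ treated as variables, arrives at $\lambda_{i,j}=s^{2n-i-j-1}\lambda_{n-1,n}$ with $\lambda_{n-1,n}=s^4t/(s^{2n}-q^2)$, and concludes that the splitting exists over $\Lf[t]$ because the denominator is invertible there (but not over $\Lb[t]$). Your additional remark about verifying that all $A_k$ collapse to the same expression is a reasonable point of care, consistent with what was implicitly established in the earlier proposition.
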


\begin{proof}
To show that the sequence splits, we need to find a vector $0 \neq w = \sum_{1 \leq i < j \leq n} \lambda_{i,j} \, w_{i,j} \in \Wb_{n,2}$, where $\lambda_{i,j} \in \Lf[t]$, such that \mbox{$(\sigma_k -1)(b + w) = 0$}, for $1 \leq k \leq n-1$. Proceeding as in Proof of Prop.~\ref{prop:Nn20_split_spec} by considering $q$, $s$ and $t$ as variables, we get
\[
\lambda_{i,j} = s^{2n-i-j-1} \lambda_{n-1,n} \quad \text{ where } \quad \lambda_{n-1,n} = \frac{s^4 t}{s^{2n}-q^2}.
\]
The computations are exactly as in Proof of Prop.~\ref{prop:Nn20_split_spec}, with the difference that we do not need to check whether we divide by zero, since $s$, $q$ and $t$ are now variables. Since $\lambda_{n-1,n} \in \Lf[t]$ but $\lambda_{n-1,n} \not\in \Lb[t]$, the representation $\widetilde{\NH}_{n,2,1}$ splits over $\Lf[t]$ and not over $\Lb[t]$.
\end{proof}

\subsection{Restricting \texorpdfstring{$\NH_{n,2,0}$}{NH(n,2)} on \texorpdfstring{$B_{n-1}$}{B(n-1)}}
Another means to construct a non trivial extension of $\WH_{n,0}$ by $\WH_{n,2}$ for any $n \in \Npp$ is by restricting $\NH_{n,2,0}$ on $B_{n-1}$. 
In this section we prove using Theorem~\ref{thm:Nn2_split} that this restriction splits.

Before we start, we recall the following facts considering the restriction of $\Wb_{n,\ell}$ on $B_{n-1}$ \cite{JK} that we apply directly on $\WH_{n, \ell}$. The map $\Vs_{r-1}^{\otimes n-1} \to \Vs_{r-1}^{\otimes n}$ defined by $u \mapsto u_0 \otimes u$ induces an embedding $\iota_{n-1,\ell}^{\WH} \colon \WH_{n-1,\ell} \hookrightarrow \WH_{n,\ell}$. The basis of $\WH_{n-1,\ell}$ with respect to the basis of $\WH_{n,\ell}$ consists of the vectors $\Phi(a_{\vec{\varepsilon}})$ \eqref{eq:map_phi}, where $a_{\vec{\varepsilon}} \in \AH_{n,\ell}$ as in \eqref{eq:a_epsilon} with $\vec{\varepsilon} = (\varepsilon_j, \ldots, \varepsilon_n)$ with $j > 2$. Let now the inclusion
\begin{align*}
q_{n-1} \colon B_{n-1} &\hookrightarrow B_n
\\
\sigma_i &\mapsto \sigma_{i+1}.
\end{align*}
The inclusion $\iota_{n-1,\ell}^{\WH}$ is $B_{n-1}$-equivariant with respect to $q_{n-1}$. In other words, the action of $q_{n-1}(B_{n-1}) = \langle \sigma_2, \ldots, \sigma_{n-1} \rangle \subset B_n$ on $\WH_{n,\ell}$ is reducible.

For $\ell=2$ the basis of $\WH_{n-1,2}$ with respect to the basis of $\WH_{n,2}$ is given by the vectors $w_{i,j}$, where $2 \leq i < j \leq n$. Further, note that for $\ell = 0$, it holds $\WH_{n,0} = \spn \{ u_0^{\otimes n} \}$ for every $n \in \Npp$. We denote by $\Res_{n-1} W$, where $W$ a $B_n$-module, the restriction of $W$ on $B_{n-1}$ with respect to the inclusion $q_{n-1} \colon B_{n-1} \hookrightarrow B_n$. Now, we have the following:

\begin{prop}
Let $n \in \Npp$, $r \in \N_{\geq 3}$ and suppose that $n \equiv -1 \mod r$. The map
\begin{align*}
\iota_{n-1,2}^{\NH} \colon \NH_{n-1,2,0} &\lhook\joinrel\longrightarrow \NH_{n,2,0}
\\
w_{i,j} &\longmapsto \iota_{n-1,2}^{\WH}(w_{i,j}) = w_{i+1,j}
\\
b &\longmapsto b,
\end{align*}
is a $B_{n-1}$-equivariant embedding with respect to the inclusion $q_{n-1} \colon B_{n-1} \hookrightarrow B_n$.
Moreover, the short exact sequence of $B_{n-1}$-modules
\[
\begin{array}{ccccccccc}
0 & \longrightarrow & \WH_{n,2} & \lhook\joinrel\longrightarrow & \Res_{n-1} \NH_{n,2,0} &\longrightarrow  & \WH_{n,0} & \longrightarrow & 0
\\
& & w_{i,j} & \longmapsto & w_{i,j} & \longmapsto & 0 & &
\\
& & & & b & \longmapsto & b, & &
\end{array}
\]
splits.
\end{prop}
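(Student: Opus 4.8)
The plan is to first verify that $\iota_{n-1,2}^{\NH}$ is a $B_{n-1}$-equivariant embedding, and then to produce the splitting of the short exact sequence by transporting, through this embedding, a splitting that already exists for the smaller module.

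For the first step, observe that $\iota_{n-1,2}^{\NH}$ restricts on the span of the vectors $w_{i,j}$ to the map $\iota_{n-1,2}^{\WH}$, which is $B_{n-1}$-equivariant with respect to $q_{n-1}$ by the recalled result of \cite{JK}; on the vector $b$ one checks equivariance directly from \eqref{eq:action_vec_b}: in the source one has $\sigma_i b = b + s^{-3}(1-q^2)\, w_{i,i+1}$ for $1\le i\le n-2$, while in the target $\sigma_{i+1} b = b + s^{-3}(1-q^2)\, w_{i+1,i+2}$, and these two identities are matched by $\iota_{n-1,2}^{\NH}$ since $\iota_{n-1,2}^{\WH}$ sends $w_{i,i+1}$ to $w_{i+1,i+2}$. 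Injectivity is immediate, as $\iota_{n-1,2}^{\NH}$ sends a basis to a linearly independent family.

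For the splitting I would exploit the numerical fact that $n \equiv -1 \bmod r$ together with $r\geq 3$ forces $n-1 \equiv -2 \not\equiv -1 \bmod r$. Hence Theorem~\ref{thm:Nn2_split}, applied with $n$ replaced by $n-1$, tells us that the $B_{n-1}$-module $\NH_{n-1,2,0}=\widetilde{\NH}_{n-1,2,0}\bigl(q,s,s^{-3}(1-q^2)\bigr)$ splits as $\WH_{n-1,2}\oplus \C v$, where $v=b+w_0$ for some $w_0\in\WH_{n-1,2}$ and $\C v$ carries the trivial $B_{n-1}$-action, i.e. $\sigma_i v = v$ for $1\le i\le n-2$. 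Pushing $v$ forward, the vector $\iota_{n-1,2}^{\NH}(v)=b+\iota_{n-1,2}^{\WH}(w_0)\in \Res_{n-1}\NH_{n,2,0}$ has its $\WH_{n,2}$-component equal to $\iota_{n-1,2}^{\WH}(w_0)\in\WH_{n,2}$, is fixed by $q_{n-1}(B_{n-1})=\langle\sigma_2,\ldots,\sigma_{n-1}\rangle$ by the equivariance just established, and maps to the generator $b\neq 0$ under the quotient map $\Res_{n-1}\NH_{n,2,0}\to\WH_{n,0}$. Therefore $\C\,\iota_{n-1,2}^{\NH}(v)$ is a $B_{n-1}$-submodule complementary to $\WH_{n,2}$ and isomorphic to $\WH_{n,0}$, which yields the required splitting.

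The only delicate point is the modular bookkeeping $n\equiv -1\Rightarrow n-1\not\equiv -1\bmod r$: this is exactly what lets Theorem~\ref{thm:Nn2_split} split the sub-representation even though $\NH_{n,2,0}$ itself is a non-split extension; the remainder is a routine check of the braid relations on $b$ via \eqref{eq:action_vec_b} and a diagram chase. As an alternative to invoking Theorem~\ref{thm:Nn2_split}, one could build the splitting vector by hand, rerunning the linear system in the proof of Proposition~\ref{prop:Nn20_split_spec} but imposing $(\sigma_k-1)(b+w)=0$ only for $k=2,\ldots,n-1$: dropping the $k=1$ relation decouples the coefficients $\lambda_{1,j}$ from the rest (one extra free parameter), and the surviving conditions $A_k=-t$ collapse to a single linear equation in the two free parameters, which is solvable; one may even take $\lambda_{n-1,n}=0$, so that the splitting vector is $b$ plus a combination of the $w_{1,j}$ alone.
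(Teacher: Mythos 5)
Your proposal is correct and follows essentially the same route as the paper: check $B_{n-1}$-equivariance of $\iota_{n-1,2}^{\NH}$ directly on $b$ via \eqref{eq:action_vec_b}, observe that $n\equiv -1$ forces $n-1\not\equiv -1 \bmod r$ so that Theorem~\ref{thm:Nn2_split} splits the sequence at level $n-1$, and push the invariant vector $b+w$ forward through the equivariant embedding. The concluding aside about rerunning the linear system with only $k=2,\dots,n-1$ is not needed and its claimed simplifications are unverified, but the main argument matches the paper's proof.
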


\begin{proof}
Since $\ell = 2$ the representations $\NH_{n,2,0}$ are defined by Theorem~\ref{thm:Nnl} for $2 = \ell < r$ and therefore we have the condition $r \geq 3$ of the statement.

Note that, the map $\iota_{n-1,2}^{\NH}$ is by definition $B_{n-1}$-equivariant on $\WH_{n-1,2} \subset \NH_{n-1,2,0}$. By the action of $B_n$ on $b$ \eqref{eq:action_vec_b} we have
\[
\iota_{n-1,2}^{\NH} (\sigma_i \, b)
= \iota_{n-1,2}^{\NH} (b + w_{i,i+1})
= b + w_{i+1,i+2}
=  \sigma_{i+1} \, b
= q_{n-1} (\sigma_i) \, \iota_{n-1,2}^{\NH} (b).
\]
Therefore, the map $\iota_{n-1,\ell}^{\NH}$ is a $B_{n-1}$-equivariant map with respect to $q_{n-1}$.

By the above and the $B_{n-1}$-equivariant embeddings $\iota_{n-1,\ell}^{\WH} \colon \WH_{n-1,\ell} \hookrightarrow \WH_{n,\ell}$, we get the commuting diagram of $B_{n-1}$-modules
\[
\begin{tikzcd}
0 \arrow[r] & \WH_{n,2} \arrow[r, hook] & \Res_{n-1} \NH_{n,2,0} \arrow[r] & \WH_{n,0} \arrow[r] & 0
\\
0 \arrow[r] & \WH_{n-1,2} \arrow[u, hook, "\iota_{n-1,2}^{\WH}"] \arrow[r, hook] & \NH_{n-1,2,0} \arrow[u, hook, "\iota_{n-1,2}^{\NH}"] \arrow[r] & \WH_{n-1,0} \arrow[u, hook, "\iota_{n-1,0}^{\WH}"] \arrow[r] & 0,
\end{tikzcd}
\]
where the maps of the second short exact sequence are defined by
\[
\begin{array}{ccccccccc}
0 & \longrightarrow & \WH_{n-1,2} & \lhook\joinrel\longrightarrow & \NH_{n-1,2,0} &\longrightarrow  & \WH_{n-1,0} & \longrightarrow & 0
\\
& & w_{i,j} & \longmapsto & w_{i,j} & \longmapsto & 0 & &
\\
& & & & b & \longmapsto & b. & &
\end{array}
\]

Since $n \equiv - 1 \mod r$, we get $n-1 \not\equiv -1 \mod r$ and therefore, by Theorem~\ref{thm:Nn2_split} the second short exact sequence (for $\NH_{n-1,2,0}$) splits. In detail, there exists a $w \in \WH_{n-1,2}$ such that $\beta (b+w) = b+w$, for any braid $\beta \in B_{n-1}$ (see Proof of Prop.~\ref{prop:Nn20_split_spec}). Therefore, due to the $B_{n-1}$-equivarance of the map $\iota_{n-1,2}^{\NH}$ we have that
\[
q_{n-1}(\beta) \, \iota_{n-1,2}^{\NH}(b+w) = \iota_{n-1,2}^{\NH}(b+w),
\quad \text{ in } \Res_{n-1} \NH_{n,2,0},
\]
for any $\beta \in B_{n-1}$. In other words, we have that
\[
\beta' (b + \iota_{n-1,2}^{\WH}(w)) = b + \iota_{n-1,2}^{\WH}(w), \quad \text{ in } \Res_{n-1} \NH_{n,2,0},
\]
for any $\beta' \in \langle \sigma_2, \ldots, \sigma_{n-1} \rangle$. That is, the first short exact sequence splits.
\end{proof}

\subsection{A specialization of \texorpdfstring{$\widetilde{\NH}_{n,2,0}$}{N(n,2,0)}}
We now consider the specialization of $\widetilde{\NH}_{n,2,0}$ at $s=q=1$, which we denote by $\overline{\NH}_{n,2,0}$. Then the subrepresentation corresponding to the LKB representation, which we denote by $\overline{\WH}_{n,2}$, is now specialized at $s=q=1$ and \eqref{eq:LKB} implies that it factors through the symmetric group. We now prove the following statement regarding to whether $\overline{\NH}_{n,2,0}$ is faithful or not.

\begin{prop} \label{prop:Delta_n20}
Let $n \in \Npp$. The representation $\overline{\NH}_{n,2,0}$ is faithful on $\langle \Delta_n \rangle$. In particular, it holds that:
\begin{equation} \label{eq:Delta_sq1}
\Delta_n^k b = b + kt \sum_{1 \leq i < j \leq n} w_{i,j}
\end{equation}
\end{prop}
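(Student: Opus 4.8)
The plan is to prove the explicit formula \eqref{eq:Delta_sq1} for the action of $\Delta_n^k$ on the distinguished vector $b$, and then deduce faithfulness on $\langle \Delta_n \rangle$ as an immediate corollary. The key observation is that at the specialization $s = q = 1$ the LKB summand $\overline{\WH}_{n,2}$ factors through the symmetric group $S_n$, so that $\Delta_n$ acts on the $w_{i,j}$ by the permutation associated to the half--twist, namely the longest element $w_0 \in S_n$ sending $i \mapsto n+1-i$. Consequently $\Delta_n$ permutes the set $\{ w_{i,j} \mid 1 \leq i < j \leq n\}$ (sending $w_{i,j}$ to $w_{n+1-j, n+1-i}$), and in particular $\Delta_n$ fixes the vector $W := \sum_{1 \leq i < j \leq n} w_{i,j}$.

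First I would establish the base case $k=1$, i.e.\ that $\Delta_n b = b + t\, W$. Writing $\Delta_n = \delta_n \delta_{n-1}\cdots\delta_2$ with $\delta_i = \sigma_1\cdots\sigma_{i-1}$ as in Section~\ref{sec:NH}, one applies \eqref{eq:sigma_i_b} repeatedly. Each time a generator $\sigma_k$ acts on a vector of the form $b + (\text{combination of } w_{i,j})$, it adds $t\, w_{k,k+1}$ to the $b$--part and permutes the $w$--part according to the transposition $(k,k+1)$; since the $w$--part never contributes back to $b$ (the action \eqref{eq:LKB} keeps $\overline{\WH}_{n,2}$ invariant), the total coefficient of each $w_{i,j}$ accumulated is governed by a bookkeeping argument over the reduced word for $\Delta_n$. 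The cleanest way to carry this out is to use that for any braid $\beta$ we have $\beta\, b = b + t \sum_{k} w_{\pi_\beta^{-1}(k),\, \pi_\beta^{-1}(k)+1}'$ type expansion, where one tracks, for the half--twist, exactly which adjacent pairs appear; since $\Delta_n$ is a positive braid whose permutation is $w_0$ and whose reduced word of length $\binom{n}{2}$ realizes each inversion $(i,j)$ exactly once, summing the $t\, w_{i,j}$ contributions over these $\binom{n}{2}$ letters yields precisely $t\, W$. I would phrase this as an induction on $n$, peeling off $\delta_n$, to avoid an ad hoc combinatorial count.

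For the inductive step $k \to k+1$, I would simply compute
\[
\Delta_n^{k+1} b = \Delta_n\bigl(\Delta_n^k b\bigr) = \Delta_n\Bigl( b + kt\, W\Bigr) = \Delta_n b + kt\, \Delta_n W = \bigl(b + t\, W\bigr) + kt\, W = b + (k+1) t\, W,
\]
using the base case and the $\Delta_n$--invariance of $W$ noted above. This establishes \eqref{eq:Delta_sq1} for all $k \in \Z$ (the argument works for negative $k$ as well, since $\Delta_n$ acts invertibly). Faithfulness on $\langle \Delta_n \rangle$ is then immediate: if $\Delta_n^k$ acts as the identity on $\overline{\NH}_{n,2,0}$, then in particular $\Delta_n^k b = b$, so $kt\, W = 0$ in the free $\Lb[t]$--module, forcing $k = 0$ since $W \neq 0$ and $t$ is not a zero divisor; hence $\langle \Delta_n \rangle$ embeds.

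The main obstacle is the base case bookkeeping: one must verify that each $w_{i,j}$ appears with coefficient exactly $t$ (not $0$, not a higher multiple) in $\Delta_n b$, which requires either a careful induction peeling off $\delta_n$ from $\Delta_n$ (using $\Delta_n = \delta_n \cdot \Delta_{n-1}'$ where $\Delta_{n-1}'$ is the half--twist on strands $1,\ldots,n-1$) together with the compatibility of the $S_n$--action, or equivalently an appeal to the standard fact that the positive half--twist word passes each wall of the braid arrangement exactly once. Everything after the base case is a one--line computation.
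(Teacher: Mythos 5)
Your proposal is correct and follows essentially the same route as the paper: establish the $k=1$ case by tracking, letter by letter along a positive word for $\Delta_n$, the $t\,w_{i,j}$ contributed at each generator (the paper does this by the same induction you suggest, peeling off $\delta_m = \sigma_1\cdots\sigma_{m-1}$ and using that $\delta_m$ shifts $w_{i,j}\mapsto w_{i+1,j+1}$ at $s=q=1$), then induct on $k$ using that the permutation action fixes $\sum_{i<j} w_{i,j}$, and read off faithfulness from $kt\neq 0$. Your alternative justification of the base case via the standard fact that a reduced word for $w_0$ realizes each inversion exactly once is a valid and slightly slicker packaging of the same computation.
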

The proof can be found in Appendix~\ref{app:Delta_n20}.

\begin{rem}
Note that the braid $(\sigma_i \sigma_{i+1}^{-1})^3$, where $1 \leq i < n-1$, is in the kernel of $\overline{\NH}_{n,2,0}$.
\end{rem}

\section{The representation \texorpdfstring{$\WH_{4,2}$}{WH(4,2)} for \texorpdfstring{$r=3$}{r=3} and the cubic Hecke algebra} \label{sec:cubic}

By Theorem~\ref{thm:Wnl}, for $r=3$ and $n=4$ there exists a subrepresentation $\SH_{4,2} \subset \WH_{4,2}$ isomorphic to $\WH_{4,1}$. Then the quotient $\WH_{4,2} / \SH_{4,2}$ defines a 3-dimensional $B_4$-representation. In this section we show that this representation is equivalent to a representation of the cubic Hecke algebra on 4 strands.

\smallbreak
Let $x, y, z \in \C$. The \textit{cubic Hecke algebra} $H_n$ is defined as the quotient of the group algebra $\C B_n$ over the cubic relation $(\sigma_1 - x) (\sigma_1 - y) (\sigma_1 - z) = 0$. Since all braid group generators are conjugate to each other, one can also define $H_n$ as the quotient over the relations $(\sigma_i - x) (\sigma_i - y) (\sigma_i - z) = 0$, for all $1 \leq i < n$. Note that, $H_3, H_4$ and $H_5$ are isomorphic to the generalized Hecke algebras of the complex reflection groups $G_4, G_{25}$ and $G_{32}$ respectively and are finite-dimensional \cite{BM, Mar}. The Birman-Murakami-Wenzl (BMW) algebras \cite{BW,MuBMW} are naturally related to the cubic Hecke algebras and further, $H_n$ has been used to construct a quotient related to the Links-Gould invariants \cite{MW1, MW2}.

There exists an irreducible $3$-dimensional representation of $H_4$ \cite{MarThese, MW1} defined by:
\begin{equation} \label{eq:cubic_V}
\sigma_1, \sigma_3 \mapsto \begin{pmatrix}
z & 0 & 0 \\
x z + y^2 & y & 0 \\
y & 1 & x
\end{pmatrix}
\andq 
\sigma_2 \mapsto \begin{pmatrix}
x & -1 & y \\
0 & y & - x z - y^2 \\
0 & 0 & z
\end{pmatrix}.
\end{equation}
Note that the representation factors through an $H_3$-representation \cite{MW1}. We now prove the following statement.

\begin{prop}
The representation $\WH_{4,2}/\SH_{4,2}$ for $r=3$ is equivalent to the specialization of the 3-dimensional representation of $H_4$ defined by \eqref{eq:cubic_V} with $x = q^5$ and $y=z=1$.
\end{prop}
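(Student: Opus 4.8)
The plan is to exhibit an explicit linear isomorphism between the $3$-dimensional $B_4$-representation $\WH_{4,2}/\SH_{4,2}$ and the specialization of the cubic Hecke representation \eqref{eq:cubic_V} at $x=q^5$, $y=z=1$, and check equivariance on the generators $\sigma_1,\sigma_2,\sigma_3$. First I would set $r=3$, so $s=q^{2}$ and $q$ is a primitive sixth root of unity, and write down the matrices of $\sigma_1,\sigma_2,\sigma_3$ acting on $\WH_{4,2}$ in the basis $\{w_{i,j}\mid 1\le i<j\le 4\}$ using the LKB formulas \eqref{eq:LKB} (with $s$ and $q$ specialized). By Theorem~\ref{thm:Wnl}, since $j\equiv n+2(\ell-1)\equiv 4\equiv 1\bmod 3$ and $j=1<\ell=2$, the subspace $\SH_{4,2}\cong\WH_{4,1}$ is a $B_4$-subrepresentation of dimension $\dim\WH_{4,1}=3$; I would identify it explicitly via $\SH_{4,2}=\Ima F^{j+1}\vert_{\WH_{4,1}}=\Ima F^{2}\vert_{\WH_{4,1}}$ as in Proposition~\ref{prop:struct3}, obtaining three spanning vectors of $\SH_{4,2}$ in terms of the $w_{i,j}$ (analogously to the computation $F^2u_0^{\otimes 3}$ in the proof of Lemma~\ref{lem:RS_non_split}). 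Picking a complementary set of three $w_{i,j}$'s to descend to a basis of the quotient, I would then read off the $3\times 3$ matrices of $\sigma_1,\sigma_2,\sigma_3$ on $\WH_{4,2}/\SH_{4,2}$.

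Next I would compare these matrices with \eqref{eq:cubic_V} specialized at $x=q^5,\ y=z=1$, which gives
\[
\sigma_1,\sigma_3\mapsto\begin{pmatrix} 1 & 0 & 0\\ q^5+1 & 1 & 0\\ 1 & 1 & q^5\end{pmatrix},\qquad
\sigma_2\mapsto\begin{pmatrix} q^5 & -1 & 1\\ 0 & 1 & -q^5-1\\ 0 & 0 & 1\end{pmatrix}.
\]
The key structural checks are: that the three eigenvalues of each $\sigma_i$ on the quotient are $1,1,q^5$ (consistent with the cubic relation $(\sigma_i-q^5)(\sigma_i-1)^2=0$ — note $y=z=1$, so the cubic relation degenerates to a quadratic $\times$ linear factor, matching the fact that the quotient carries eigenvalue $1$ with multiplicity $2$ and $q^5$ with multiplicity $1$ when $r=3$, cf.\ the eigenvalue discussion before Lemma~\ref{lem:min_pol} with $s^{-4}q^2=q^{-8}q^2=q^{-6}=1$ and $s^{-2}=q^{-4}=q^2$, i.e.\ the LKB eigenvalues become $1,-q^2,1$ — so I must be careful here about which eigenvalue survives on the quotient); and that $\sigma_1$ and $\sigma_3$ have the same matrix while $\sigma_2$ is a different matrix satisfying $\sigma_1\sigma_2\sigma_1=\sigma_2\sigma_1\sigma_2$ and $\sigma_1\sigma_3=\sigma_3\sigma_1$. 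Once the matrices on $\WH_{4,2}/\SH_{4,2}$ are in hand, producing the intertwiner amounts to solving a linear system for the change-of-basis matrix $P$ with $P\,\rho^{\WH}(\sigma_i)\,P^{-1}=\rho^{\mathrm{cubic}}(\sigma_i)$ for $i=1,2,3$; since both representations are $3$-dimensional and (as I will check) irreducible, Schur's lemma guarantees $P$ is unique up to scalar, so it suffices to find it by matching, say, the $\sigma_1$- and $\sigma_2$-actions and then verify the $\sigma_3$-action is automatically correct.

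The main obstacle I anticipate is purely computational bookkeeping: correctly identifying $\SH_{4,2}$ inside $\WH_{4,2}$ (the formula for $F^2$ applied to the relevant $\RH_{4,1}$ vectors, expressed in the $w_{i,j}$ basis, requires care with the normalizations in \eqref{eq:W_n2_basis} and the $\gamma$-coefficients of \eqref{eq:proj_rels2}), and then choosing a basis of the quotient in which the resemblance to \eqref{eq:cubic_V} is transparent rather than buried in a messy conjugation. A secondary point that needs checking is that $H_4$ actually surjects onto this $B_4$-representation — i.e.\ that the matrices of $\sigma_i$ on $\WH_{4,2}/\SH_{4,2}$ satisfy the cubic relation $(\sigma_i-q^5)(\sigma_i-1)^2=0$, which follows from the eigenvalue computation above but should be stated explicitly so the map $\C B_4\to\End(\WH_{4,2}/\SH_{4,2})$ factors through $H_4$ at the chosen parameters. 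I would organize the write-up as: (1) specialize and record the LKB matrices at $r=3$; (2) compute $\SH_{4,2}$ and pass to the quotient, recording the three $3\times3$ matrices; (3) verify the cubic relation at $x=q^5,y=z=1$ so the action factors through $H_4$; (4) exhibit the explicit $P$ and conclude equivalence, noting irreducibility of both sides to pin down $P$ uniquely.
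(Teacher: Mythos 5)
Your overall strategy matches the paper's: identify $\SH_{4,2}$ explicitly inside $\WH_{4,2}$ as the image of a power of $F$ applied to a lower highest strong weight space, choose a complement, record the $3\times 3$ matrices on the quotient, and compare with \eqref{eq:cubic_V} at $x=q^5$, $y=z=1$. The paper does exactly this, choosing explicit complementary vectors $g_1,g_2,g_3$ so that the quotient matrices literally coincide with \eqref{eq:cubic_V} (hence no intertwiner $P$, and no irreducibility/Schur argument, is needed), and verifying the coefficients by a short computer calculation.

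There is, however, a concrete arithmetic slip that would derail the computation as written. For $n=4$, $\ell=2$, $r=3$ one has $n+2(\ell-1)=6\equiv 0\bmod 3$, not $4\equiv 1$; hence $j=0$, $\ell'=\ell-1-j=1$, and $\SH_{4,2}=\Ima F^{j+1}\vert_{\WH_{4,1}}=\Ima F\vert_{\WH_{4,1}}$, with a \emph{single} power of $F$. Your version is internally inconsistent: with $j=1$ one would get $\ell'=0$ and a one-dimensional $\SH_{4,2}\cong\WH_{4,0}$, not the three-dimensional $\WH_{4,1}$ that you (correctly) use for the dimension count; and $F^2\,\WH_{4,1}$ lands in $\VH_{4,3}$, not in $\VH_{4,2}$, so $\Ima F^2\vert_{\WH_{4,1}}$ cannot be the sought submodule of $\WH_{4,2}$. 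Once corrected to $\Ima F\vert_{\WH_{4,1}}$, spanned by the vectors $F\,\overline{c}_i$ with $\overline{c}_i=c_i-s^{n-i}c_n$ and expressed in the $w_{i,j}$ basis via \eqref{eq:phi_Burau}, the rest of your plan goes through; your eigenvalue analysis is consistent with the paper, since at $r=3$ the LKB eigenvalues $1$, $-s^{-2}=-q^2=q^5$ and $s^{-4}q^2=q^{-6}=1$ match the degenerate cubic $(X-q^5)(X-1)^2$.
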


\begin{proof}
Solving the modular condition $j \equiv n + 2(\ell-1) \mod r$, with $j < \ell$, for the representation $\WH_{4,2}$ for $r=3$, we get that $j=0$. As in the proof of Proposition~\ref{prop:struct3}, it holds that:
\[
\SH_{n,\ell} =  \Ima F^{j+1}\vert_{\WH_{n,\ell'}}, \text{ where } \ell' = \ell - 1 -j.
\]
In our case, this becomes:
\[
\SH_{4,2} =  \Ima F \vert_{\WH_{4,1}}, \text{ for } r=3.
\]

Now, for any $n \geq 2$, the vectors $\overline{c}_i := c_i - s^{n-i} c_n$, for $1 \leq j < n$, span the space $\WH_{n,1}$ \cite{JK}. Further, for $\ell=2$ and $n \equiv -2 \mod r$ we have that (see Appendix~\ref{app:phi_Burau} for the proof):
\begin{equation} \label{eq:phi_Burau}
F \, \overline{c}_i = s^{-(i-1)} q^2 \sum_{j=1}^{n-i} s^{-j} \, w_{i,i+j} + \sum_{j=1}^{i-1} s^{-(j-1)} \, w_{j,i} - s^{n-i} \sum_{j=1}^{n-1} s^{-(j-1)} \, w_{j,n}.
\end{equation}
The $B_n$-action on these vectors is given by:
\[
\begin{array}{rll}
\sigma_i F \, \overline{c}_j &= F \, \overline{c}_j & \text{ if } i \neq n-1 \text{ and } j \neq i, i+1,
\\
\sigma_i F \, \overline{c}_i &= (1 - s^{-2}) F \, \overline{c}_i + s^{-1} F \, \overline{c}_{i+1} & \text{ if } i \neq n-1,
\\
\sigma_i F \, \overline{c}_{i+1} &= s^{-1} F \, \overline{c}_i & \text{ if } i \neq n-1,
\\
\sigma_{n-1} F \, \overline{c}_{j} &= F \, \overline{c}_{j} + s^{n-j-1} \, F \, \overline{c}_{n-1} & \text{ if }j \neq n-1,
\\
\sigma_{n-1} F \, \overline{c}_{n-1} &= -s^{-2} \, F \, \overline{c}_{n-1}. &
\end{array}
\]

For $\WH_{4,2}$ and $r=3$ the following vectors span the complement of $\SH_{4,2}$ inside $\WH_{4,2}$:
\begin{align*}
g_1 &= w_{1,2}  - q^2 \, w_{1,3} + q \, w_{2,4} + w_{3,4},
\\
g_2 &= -\frac{1}{4} q^2 \left(w_{1,2}+ 2 \, w_{1,3}+w_{1,4}+w_{2,3}+2 \, w_{2,4}+w_{3,4}\right),
\\
g_3 &= -q \, w_{1,3} - w_{1,4} - w_{2,3} + \left( q - 1 \right) w_{2,4}.
\end{align*}
It remains to rewrite the action of $B_n$ on $\WH_{4,2}$ with respect to the new basis $\{g_1, g_2, g_3, F \, \overline{c}_1, F \, \overline{c}_2, F \, \overline{c}_3\}$. We write the $B_n$-action on the vectors $g_1, g_2$ and $g_3$ as:
\[
\sigma_i g_j = \sum_{k=1}^3 a_{i,j,k} \, g_k + \sum_{k=1}^3 \, b_{i,j,k} \, F \, \overline{c}_k,
\]
where $a_{i,j,k}, b_{i,j,k} \in \C$.
Denote by $[g_j]$ the images of $g_j$, for $j=1,2,3$, in the quotient $\WH_{4,2} / \SH_{4,2}$. Then the $B_n$-action on the quotient is given by:
\[
\sigma_i [g_j] = \sum_{k=1}^3 a_{i,j,k} \, [g_k].
\]
Using the Mathematica program \texttt{N42\_cubic\_Hecke.nb} (available at \cite{prog}) and \eqref{eq:LKB} we compute the coefficients $a_{i,j,k}$ and $b_{i,j,k}$ and we get that the matrices (on the basis $\{ [g_1], [g_2], [g_3] \}$) corresponding to the generators of $B_4$ acting on the quotient $\WH_{4,2} / \SH_{4,2}$ are:
\[
\sigma_1, \sigma_3 \mapsto
\begin{pmatrix}
 1 & 0 & 0 \\
 1+q^5 & 1 & 0 \\
 1 & 1 & q^5 \\
\end{pmatrix}
\andq 
\sigma_2 \mapsto 
\begin{pmatrix}
 q^5 & -1 & 1 \\
 0 & 1 & -1-q^5 \\
 0 & 0 & 1 \\
\end{pmatrix}.
\]
These are exactly the matrices of \eqref{eq:cubic_V} by substituting $x=q^5$ and $y=z=1$.
\end{proof}

\newpage
\appendix

\section{Proofs of various statements}

\subsection{Action of \texorpdfstring{$R$}{R}-matrix on weight modules} \label{app:Rmatrix_weight}
We show that the action of the $R$-matrix \eqref{eq:Rmatrix} on any (weight) module $V$ of $D$ is given by \eqref{eq:Rmatrix_unrolled}:
\[
R = q^{H \otimes H/2} \sum_{n=0}^{r-1} \frac{\qnum{1}^{2n}}{\qnum{n}!} q^{n(n-1)/2}  \left( E^n  \otimes F^n \right).
\]

Due to \eqref{eq:dq_rels} we have that $E^i k = q^{-i} \, k E^i$ and $F^i k = q^{i} \, k F^i$, for any $i \in \N$. Therefore \eqref{eq:Rmatrix} can be written as $R = C \circ \widetilde{R}$, where:
\[
C = \frac{1}{4r}  \sum_{m,m'=0}^{4r-1}  q^{- m m' / 2} k^m \otimes k^{m'} 
\andq
\widetilde{R} = 
\sum_{n=0}^{r-1} \frac{\qnum{1}^{2n}}{\qnum{n}!} q^{n(n-1)/2}  \left( E^n  \otimes F^n \right).
\]
Therefore, it remains to prove that $C = q^{H \otimes H/2}$ \eqref{eq:qHH2} as operators acting on $V \otimes V$.
Let $-2r \leq \lambda, \lambda' < 2r$ and let $v$ and $w$ be weight vectors of $V$, such that $k v = q^{\lambda/2} v$ and $k w = q^{\lambda'/2} w$. We have that:
\begin{align*}
4r \, C(v \otimes w) = \sum_{m,m'=0}^{4r-1}  q^{- m m' / 2} k^m \, v \otimes k^{m'} \, w
&= \sum_{m,m'=0}^{4r-1}  q^{(- m m' + m \lambda + m' \lambda') /2}  \, v \otimes w
\\
&= \sum_{m=0}^{4r-1} q^{m \lambda /2} \sum_{m'=0}^{4r-1} q^{\frac{\lambda'-m}{2}m'}  \, v \otimes w.
\end{align*}
Note that $q^{\frac{\lambda'-m}{2}}=1$ if and only if $\lambda'-m = 0 \mod 4r$, since $q$ is a primitive $2r$-th root of unity. Since $0 \leq m \leq 4r-1$ this happens only for one such $m$. Denote this value of $m$ by $m_0 = 4 \mu r + \lambda'$, for some $\mu \in \Z$. Then at the second summation (over $m'$) for all $m' \neq m_0$ the corresponding summand equals 0, since it is multiple of the sum of all $4r/\gcd(4r,\lambda'-m)$-roots of unity. Therefore:
\begin{align*}
\sum_{m,m'=0}^{4r-1}  q^{- m m' / 2} k^m \, v \otimes k^{m'} \, w
&= 4r \; q^{ m_0 \lambda/2} q^{(\lambda' - m_0) /2} v \otimes w
\\
&= 4r \, q^{\lambda \lambda' /2} v \otimes w
= 4r \, q^{H \otimes H/2} (v \otimes w),
\end{align*}
that is, $C = q^{H \otimes H/2}$ on $V \otimes V$.

\subsection{\texorpdfstring{$R$}{R}-matrix on \texorpdfstring{$\Vs_{r-1}^{\otimes 2}$}{V(r-1) tensor V(r-1)}} \label{app:Rmatrix_V}
Applying the $R$-matrix \eqref{eq:Rmatrix_unrolled} composed with the permutation operator $\tau$ on a vector $u_i \otimes u_j$ of $\Vs_{r-1}^{\otimes 2}$ we get:
\begin{align*}
(\tau \circ R)(u_i \otimes u_j) = q^{H \otimes H /2}\sum_{n=0}^{r-1} \frac{\qnum{1}^{2n}}{\qnum{n}!} q^{n(n-1)/2} F^n \, u_j \otimes E^n \, u_i
\end{align*}
Note that $E^n \,u_i \neq 0$ if $n < i +1$ and $F^n \, u_j \neq 0$ if $n<r-j$. Hence, the summation is up to $\min (i,r-j-1)$ and all other terms are zero. Substituting the action of $E$ and $F$ we get:
\begin{equation}\label{eq:Rcalc_1}
\begin{aligned}
(\tau \circ R)(u_i \otimes &u_j) = \sum_{n=0}^{\min (i,r-j-1)} \Bigg( \frac{\qnum{1}^{2n}}{\qnum{n}!} q^{\frac{n(n-1)}{2}}
\\
& \cdot \prod_{m=0}^{n-1} \qint{m+j+1} \qint{r - 1 -(m+j)}\, q^{H \otimes H/2} (u_{j+n} \otimes u_{i-n}) \Bigg).
\end{aligned}
\end{equation}
It holds that:
\begin{equation}\label{eq:Rcalc_2}
\frac{\qnum{1}^n}{\qnum{n}!} \prod_{m=0}^{n-1} \qint{m+j+1} = (\qint{n}!)^{-1} \prod_{m=1}^{n} \qint{m+j} = \frac{\qint{n+j}!}{\qint{j}!\, \qint{n}!} = \qbin{n+j}{j}.
\end{equation}
Using the equality $\qnum{1} \cdot \qint{r - 1 -(m+j)} = -\qnum{-1-(m+j)} = \qnum{m+j+1}$ and substituting Equation~\eqref{eq:Rcalc_2} into Equation~\eqref{eq:Rcalc_1} we get:
\begin{align}\label{eq:Rcalc_3}
\begin{split}
(\tau \circ R)&(u_i \otimes u_j) = \\
&\sum_{n=0}^{\min (i,r-j-1)} q^{n(n-1)/2} \qbin{n+j}{j} \prod_{m=0}^{n-1} \qnum{m+j+1} \, q^{H \otimes H /2} (u_{j+n} \otimes u_{i-n}).
\end{split}
\end{align}
We set $s:=q^{r-1}=-q^{-1}$ \eqref{eq:s}.  The action of $q^{H \otimes H /2}$ on $u_{j+n} \otimes u_{i-n}$ is as follows:
\begin{align*}
q^{H \otimes H /2} (u_{j+n} \otimes u_{i-n}) &= q^\frac{(r - 1 -2(i-n))(r - 1 -2(j+n))}{2} \, u_{j+n} \otimes u_{i-n} \\
&= q^\frac{(r - 1)^2}{2} q^{-(r - 1) (i+j)} q^{2(i-n)(j+n)} \, u_{j+n} \otimes u_{i-n} \\
&= q^\frac{(r - 1)^2}{2} s^{-(i+j)} q^{2(i-n)(j+n)} \, u_{j+n} \otimes u_{i-n}.
\end{align*}
The factor $q^\frac{(r - 1)^2}{2}$ is annihilated by the action of the operator $\Rs$ defined as in \eqref{eq:Bn_action_V}. Hence, using also \eqref{eq:Rcalc_3}, the action of $\Rs$ on the vector $u_i \otimes u_j$ of $\Vs_{r-1}^{\otimes 2}$ is:
\begin{align*}
\Rs(u_i &\otimes u_j) = \\
&s^{-(i+j)} \sum_{n=0}^{\min (i,r-j-1)} q^{2(i-n)(j+n)} q^{n(n-1)/2} \qbin{n+j}{j} \prod_{m=0}^{n-1} \qnum{m+j+1} \, u_{j+n} \otimes u_{i-n}.
\end{align*}

\subsection{Proof of Lemma~\ref{lem:Phi_auto}} \label{app:Phi_auto}
Let $a_{\vec{\varepsilon}} \in \AH_{n,\ell}$ as in \eqref{eq:a_epsilon} and $b \in \BH_{n,\ell}$. By the definition of the map $\Phi$ \eqref{eq:map_phi} we have that $(\Phi-1)(b) = 0$. Moreover,  for $m=1$ it holds $b_{\vec{\varepsilon}, 1} = 1$ and hence, by \eqref{eq:a_epsilon} we get $(\Phi - 1)(a_{\vec{\varepsilon}}) \in \BH_{n,\ell}$. Therefore $(\Phi - 1)^2(a_{\vec{\varepsilon}}) = 0$.

\subsection{Proof of Lemma~\ref{lem:Wnl_Anl}} \label{app:Wnl_Anl}
Let $1 \leq \ell < r$. We first show that $E \circ \Phi \vert_{\AH_{n,\ell}} = 0$. We have that:
\begin{align*}
E \circ \Phi(a_{\vec{\varepsilon}}) &= E \left( \sum_{m=0}^{\ell} b_{\vec{\varepsilon},m} \, u_0^{\otimes j - 2} \otimes u_m \otimes E^{m-1} \, u_{\vec{\varepsilon}} \right)
\\
&= \sum_{m=0}^{\ell} b_{\vec{\varepsilon},m} \, u_0^{\otimes j - 2} \otimes u_m \otimes E^m \, u_{\vec{\varepsilon}}
+ \sum_{m=0}^{\ell} b_{\vec{\varepsilon},m} \, u_0^{\otimes j - 2} \otimes u_{m-1} \otimes K E^{m-1} \, u_{\vec{\varepsilon}}.
\end{align*}
Note that $u_{\vec{\varepsilon}} \in \VH_{n-j-1, \ell - 1}$ by \eqref{eq:a_epsilon}, therefore $E^\ell u_{\vec{\varepsilon}} = 0$ and the summand for $m = \ell$ at the first sum is zero. Further, at the second sum for $m=0$ we have that $u_{m-1}=0$. Finally, $K E^{m-1} \, u_{\vec{\varepsilon}} = s^{n-j+1} q^{-2(\ell-m)} E^{m-1} \, u_{\vec{\varepsilon}}$ since $E^{m-1} u_{\vec{\varepsilon}} \in \VH_{n-j-1, \ell - m}$. Putting it all together, we have
\begin{align*}
E \circ \Phi(a_{\vec{\varepsilon}})
&= \sum_{m=0}^{\ell-1} b_{\vec{\varepsilon},m} \, u_0^{\otimes j - 2} \otimes u_m \otimes E^m \, u_{\vec{\varepsilon}}
\\
&\qquad + \sum_{m=1}^{\ell} s^{n-j+1} q^{-2(\ell-m-1)} b_{\vec{\varepsilon},m} \, u_0^{\otimes j - 2} \otimes u_{m-1} \otimes E^{m-1} \, u_{\vec{\varepsilon}}
\\
&= \sum_{m=1}^{\ell} (b_{\vec{\varepsilon},m} + s^{n-j+1} q^{-2(\ell-m-1)} b_{\vec{\varepsilon},m+1}) \, u_0^{\otimes j - 2} \otimes u_m \otimes E^m \, u_{\vec{\varepsilon}}.
\end{align*}
But $b_{\vec{\varepsilon},m} + s^{n-j+1} q^{-2(\ell-m-1)} b_{\vec{\varepsilon},m+1} = 0$ and therefore, $E \circ \Phi = 0$ on $\AH_{n,\ell}$. Now, by definition, $\Phi$ is the identity on $\BH_{n, \ell}$. Hence, $E \circ \Phi = 0 \oplus E \vert_{\BH_{n,\ell}}$.

By Lemma~\ref{lem:E_iso} $E \vert_{\BH_{n, \ell}}$ is injective for all $\ell \geq 1$ and by Lemma~\ref{lem:Phi_auto} the map $\Phi$ is an automorphism of $\VH_{n,\ell}$. Therefore,  $\ker(E \circ \Phi) \cap \VH_{n,\ell} = \ker E \cap \VH_{n,\ell} = \AH_{n,\ell}$. Hence, due to Lemma~\ref{lem:Phi_auto} we conclude that $\AH_{n,\ell} \cong \WH_{n, \ell}$.

\subsection{Proof of Equations \ref{eq:double_braiding} and \ref{eq:twist_cmr}}\label{app:double_braiding}
Let $w$ be a weight vector of $\Ps_i$ of weight $q^{(i + 2m')/2}$, where $m' \in \{-r+1,  \ldots, j+1\}$.
Since $E \, u_0^\alpha = F \, u_0^\alpha = 0$, by \eqref{eq:Rmatrix_unrolled} we have that:
\[
c_{\Vmd_0, \Ps_i} (u_0^\alpha \otimes w) = \tau \circ R (u_0^\alpha \otimes w) = \tau (q^{H \otimes H /2} u_0^\alpha \otimes w) = q^{\frac{mr (i + 2m')}{2}} \, w \otimes u_0^\alpha.
\]
Similarly, we get:
\[
c_{\Ps_i, \Vmd_0} (w \otimes u_0^\alpha) = q^{\frac{mr (i + 2m')}{2}} u_0^\alpha \otimes w.
\]
After combining the two equations, we finally have:
\[
c_{\Ps_i, \Vmd_0} \circ c_{\Vmd_0, \Ps_i} (u_0^\alpha \otimes w) = q^{mr(i+2m')} \, u_0^\alpha \otimes w = q^{mri} \, u_0^\alpha \otimes w,
\]
which proves \eqref{eq:double_braiding}.

For the calculations involving the twist operator, we use the ribbon element as given by \cite{Oh} and \cite{CGP3}, that is:
\begin{equation} \label{eq:twist_unrolled}
\theta = K^{r-1} \sum_{n=0}^{r-1} \frac{\qnum{1}^{2n}}{\qnum{n}!} q^{n(n-1)/2} S(F^n) q^{-H^2/2} E^n,
\end{equation}
where the operator $q^{-H^2/2}$ is defined as $q^{-H^2/2} v = q^{-\lambda^2 / 2} v$ for a weight vector $v$, where $\lambda$ is the strong weight of $v$ \eqref{eq:H}. Since $E \, u_0^\alpha = F \, u_0^\alpha = 0$, by \eqref{eq:twist_unrolled} we have that:
\[
\theta (u_0^\alpha) = K^{r-1} q^{-H^2/2} u_0^\alpha = q^{mr(r-1) - \frac{(mr)^2}{2}} \, u_0^\alpha.
\]
Since the action of the twist operator $\theta_{\Vs_0^\alpha}$ is defined by the action of $\theta^{-1}$, \eqref{eq:twist_cmr} is proved.

\subsection{Proof of Equation~\ref{eq:twist_Pmdi}}\label{app:twist_Pmdi}
By the naturality of the twist, it holds that:
\[
\theta_{\Pmd_i} = \theta_{\Vmd_0 \otimes \Ps_i}= \left( \theta_{\Vmd_0} \otimes \theta_{\Ps_i} \right) c_{\Ps_i, \Vmd_0} \circ c_{\Vmd_0, \Ps_i}.
\]
By \eqref{eq:double_braiding} and \eqref{eq:twist_cmr} we get:
\[
\theta_{\Vmd_0 \otimes \Ps_i}= q^{\frac{(mr)^2}{2} + mr - m r^2} (-1)^{m i } \left( \Id_{\Vmd_0} \otimes \theta_{\Ps_i} \right).
\]
Therefore, by \eqref{eq:twist_proj}:
\[
\theta_{\Pmd_i} = q^{\frac{(mr)^2+i^2}{2} + mr - m r^2 + i} (-1)^{(m+1)i} \left( \Id_{\Vmd_0} \otimes I_{1,i} - (r - i - 1) \frac{\qnum{1}^2}{\qnum{i+1}} \Id_{\Vmd_0} \otimes x_{1,i} \right).
\]
Note that $\Id_{\Vmd_0} \otimes I_{1,i} = I_{\alpha,i}$ and $\Id_{\Vmd_0} \otimes x_{1,i} = x_{\alpha,i}$. Moreover, we have that:
\[
q^{\frac{(mr)^2+i^2}{2}} = q^{\frac{(mr+i)^2}{2}} (-1)^{-mi} \andq q^{-mr^2} = (-1)^{-mr}.
\]
Combining all the above together, we get \eqref{eq:twist_Pmdi}.

\subsection{Proof of Equation~\ref{eq:Fci}}\label{app:Fci}
First we prove by induction the following:
\begin{equation} \label{eq:Fv0n}
F \, u_0^{\otimes m} = \sum_{j=1}^m s^{-(j-1)}\, c_j.
\end{equation}
For $m=1$ we have that $F \, u_0 = u_1 = c_1$. Suppose the statement holds for $n$. Then for $n+1$, we have that:
\begin{align*}
F \, u_0^{\otimes m+1} &= K^{-1} u_0 \otimes F \, u_0^{\otimes m} + F \, u_0 \otimes u_0^{\otimes m}
= s^{-1}  \sum_{j=1}^m s^{-(j-1)}\, u_0 \otimes c_j +  u_1 \otimes u_0^{\otimes m}\\
&= \sum_{j=1}^m s^{-j}\, c_{j+1} + (-1)^k \, c_1
= \sum_{j=1}^{m+1} s^{-(j-1)}\, c_{j}.
\end{align*}
Now by \eqref{eq:Fv0n} we have that: 
\begin{align*}
F \, (u_1 \otimes & u_0^{\otimes m}) = K^{-1} u_1 \otimes F \, u_0^{\otimes m} + F \, u_1 \otimes u_0^{\otimes m}
\\
&= s^{-1} q^2 \sum_{j=1}^m s^{-(j-1)}\, u_1 \otimes c_j + \qint{2}^2 \, u_2 \otimes u_0^{\otimes m}
= q^2 \sum_{j=1}^m s^{-j}\, a_{1,j+1} + \qint{2}^2 \, b_1 .
\end{align*} 
Note that $\qint{2} \neq 0$, since $2+\ell<r$. And finally, we have that:
\begin{align*}
&F \, c_i = F \,(u_0^{\otimes i-1} \otimes u_1 \otimes u_0^{\otimes n-i})
= K^{-1} u_0^{\otimes i-1} \otimes F \, (u_1 \otimes u_0^{\otimes n-i}) + F \, u_0^{\otimes i-1} \otimes u_1 \otimes u_0^{\otimes n-i}
\\
&= s^{-(i-1)} \left[ q^2 \sum_{j=1}^{n-i} s^{-j} \, u_0^{\otimes i-1} \otimes a_{1,j+1} + \qint{2}^2 u_0^{\otimes i -1} \otimes b_1 \right]
+ \sum_{j=1}^{i-1} s^{-(j-1)}\, c_j \otimes u_1 \otimes u_0^{\otimes n-i}
\\
&=  s^{-(i-1)} q^2 \sum_{j=1}^{n-i} s^{-j} \, a_{i,i+j} 
+  \sum_{j=1}^{i-1} s^{-(j-1)}\, a_{j,i} + \qint{2}^2 s^{-(i-1)} \; b_i.
\end{align*} 

\subsection{Proof of Lemma~\ref{lem:min_pol}} \label{app:min_pol}
We prove the statement for the matrix corresponding to $\sigma_1 \in B_n$. Then it follows immediately for the rest of the generators of $B_n$ since the generators of $B_n$ are all conjugate to each other. Since $p(X)$ is the minimal polynomial of the representation $\WH_{n,2}$, we have that
$p(\sigma_1) w_{i,j} = 0$, for all $1 \leq i < j \leq n$. It remains to prove the same for the additional basis vectors of $\NH_{n,2,0}$ and $\NH_{n,2,1}$. We also have that:
\[
p(X) = X^3 + (-1 + s^{-2} - s^{-4} q^2) X^2 + (-s^{-2} + s^{-4} q^2 - s^{-6} q^2) X + s^{-6} q^2.
\]

We start with $\NH_{n,2,0}$. We denote $t = s^{-3} (1-q^2)$. By an induction on $k$ it  holds that:
\[
\sigma_1^k b = b + t \sum_{m=0}^{k-1} (s^{-4}q^2)^m \, w_{1,2}.
\]
Therefore, the coefficient of $b$ in $p(\sigma_1) b$ is zero, since the sum of the coefficients of $p(X)$ is zero. Moreover, the coefficient of $w_{1,2}$ in $p(\sigma_1) b$ equals:
\begin{align*}
&t(1 + s^{-4}q^2 + s^{-8}q^4)  + t(-1 + s^{-2} - s^{-4} q^2) (1 + s^{-4}q^2) + t(-s^{-2} + s^{-4} q^2 - s^{-6} q^4)
\\
&= 0.
\end{align*}
Hence, the matrix for $\sigma_1$ satisfies $p(X)$ and $p(X)$ is the minimal polynomial for the matrix (since it is the minimal polynomial for the matrices of $\WH_{n,2}$ when the eigenvalues are distinct, that is when $r \geq 5$).

We proceed now with $\NH_{n,2,1}$. Since $\sigma_1 b'_j = b_j'$, for $3 \leq j < n$, and since the coefficients of $p(X)$ sum up to zero, we have that $p(\sigma_1)b_j' = 0$, for $3 \leq j < n$. For $j=1,2$ we solve the following equation:
\[
\sigma_1^3 b'_j + x \, \sigma_1^2 b'_j + y \, \sigma_1 b'_j + z \, b'_j = 0.
\]
Note that:
\begin{align*}
\sigma_1^3 b'_2 + x \, \sigma_1^2 b'_2 + y \, \sigma_1 b'_2 + z \, b'_2 = 0
&\Leftrightarrow
s^{-1} \sigma_1^2 b'_1 + s^{-1} x \, \sigma_1 b'_1 + s^{-1} y \, \sigma_1 b'_1 + z \, b'_2 = 0
\\
&\Leftrightarrow
s^{-1} \sigma_1^3 b'_1 + s^{-1} x \, \sigma_1^2 b'_1 + s^{-1} y \, \sigma_1^2 b'_1 + s^{-1}  z \, b'_1 = 0
\\
&\Leftrightarrow
\sigma_1^3 b'_1 + x \, \sigma_1^2 b'_1 + y \, \sigma_1^2 b'_1 + z \,  b'_1 = 0.
\end{align*}
So, it suffices to solve the equation for either $b'_1$ or $b'_2$. Calculating $\sigma_1^k b'_2$, for $k=1,2,3$ we find that the equation is satisfied when
$x=-1 + s^{-2} - s^{-4} q^2$, $y=-s^{-2} + s^{-4} q^2 - s^{-6} q^2$ and $z=s^{-6} q^2$. We provide a verification with a Mathematica program \texttt{Nn2\_min\_poly.nb} (available at \cite{prog}).
So, $p(\sigma_1)b_j' = 0$, for every $1 \leq j <n$ and similarly as before, $p(X)$ is the minimal polynomial for the matrix corresponding to $\sigma_1$.

\subsection{Proof of Prop.~\ref{prop:Nn20}}\label{app:Nn20}
Note that the action of $B_n$ on the submodule spanned by the vectors $w_{i,j}$ is the same as the action of $B_n$ on the vectors $w_{i,j}$ of $\WH_{n,2}$ as in \eqref{eq:LKB}, which is isomorphic to the LKB representation. Therefore, it remains to prove that the braid group relations are satisfied on the vector $b$. Let $1 \leq i, j \neq n-1$ such that $\lvert i -j \rvert > 1$. Then:
\begin{align*}
\sigma_j \sigma_i b
&= t \, \sigma_j w_{i,i+1} + \sigma_j b
= t \, \sigma_j w_{i,i+1} + t \, w_{j,j+1} + b
\\
&= t \, w_{i,i+1} + t \, \sigma_i w_{j,j+1} + b
= t \, \sigma_i w_{j,j+1} + \sigma_i b
= \sigma_i \sigma_j b,
\end{align*}
where at third equality we have used the fact that $\lvert i -j \rvert > 1$. Moreover, for $1 \leq i < n-2$, by applying the formula for the action of $B_n$ on $b$, we have that:
\begin{align*}
\sigma_i \sigma_{i+1} \sigma_i b &= t \left( \sigma_i \sigma_{i+1} w_{i,i+1} + \sigma_i w_{i+1,i+2} + w_{i,i+1} \right) + b,
\\
\sigma_{i+1} \sigma_i \sigma_{i+1} b &= t \left( \sigma_{i+1} \sigma_i w_{i+1,i+2} + \sigma_{i+1} w_{i,i+1} + w_{i+1,i+2} \right) + b.
\end{align*}
A simple calculation using the action of $B_n$ on the vectors $w_{i,j}$ shows that the two expressions in parentheses are both equal to $s^{-1} \, w_{i,i+2} + w_{i,i+1} + s^{-2} \, w_{i+1,i+2}$, which proves the existence of the $B_n$-representation $\widetilde{\NH}_{n,2,0}$ over $\Z[q^{\pm}, s^{\pm}, t^{\pm}]$. Furthermore, the variable $t$ does not appear in any negative powers since the action of the inverses of the braid group generators on the vector $b$ is given by:
\[
\sigma_i^{-1} = - s^4 q^{-2} t \, w_{i,i+1} + b.
\]
That is, the representation is actually defined over $\Z[q^{\pm}, s^{\pm}, t]$, which concludes the proof of the statement.

\subsection{Proof of Proposition~\ref{prop:Delta_n20}} \label{app:Delta_n20}
We prove first the statement for $k=1$. We first prove that for every $m \leq n$:
\begin{equation} \label{eq:sigma_m_b}
(\sigma_1 \ldots \sigma_{m-1})(\sigma_1 \ldots \sigma_{m-2}) \ldots \sigma_1 b = b + t \sum_{1 \leq i, j \leq m} w_{i,j}
\end{equation}
For $m=2$, we have that $\sigma_1 \, b = b + t \, w_{1,2}$. Suppose the statement holds for any number less than $m \leq n$. Then for $m-1$ it holds that:
\[
(\sigma_1 \ldots \sigma_{m-2}) \ldots \sigma_1 b = b + t \sum_{1 \leq i, j \leq m-1} w_{i,j}.
\]
Now, for $m$ we have that
\[
(\sigma_1 \ldots \sigma_{m-1}) (\sigma_1 \ldots \sigma_{m-2}) \ldots \sigma_1 b
= (\sigma_1 \ldots \sigma_{m-1}) b + t \sum_{1 \leq i, j \leq m-1} (\sigma_1 \ldots \sigma_{m-1}) w_{i,j}.
\]
Using \eqref{eq:LKB} with $s=q=1$ we get that:
\begin{align*}
(\sigma_1 \ldots \sigma_{m-1}) w_{i,j}
&= \sigma_1 \ldots \sigma_j w_{i,j} 
= \sigma_1 \ldots \sigma_{j-1} w_{i,j+1} 
= \sigma_1 \ldots \sigma_i w_{i,j+1}
\\
&= \sigma_1 \ldots \sigma_{i-1} w_{i+1,j+1} = w_{i+1,j+1}.
\end{align*}
Now, we prove that for every $m'<n$:
\[
\sigma_1 \ldots \sigma_m' b = b + t \sum_{2 \leq j \leq m'+1} w_{1,j}.
\]
For $m'=2$ the statement is obvious. Supposing the statement for any number less than $m' < n$, we have that:
\begin{align*}
\sigma_1 \ldots \sigma_{m'} b &= \sigma_1 \ldots \sigma_{m'-1} b + t  \, \sigma_1 \ldots \sigma_{m'} w_{m',m'+1}
= b + t \sum_{2 \leq j \leq m'} w_{1,j} + t \, w_{1,m'+1}
\\
&= b + t \sum_{2 \leq j \leq m'+1} w_{1,j},
\end{align*}
where at the second equality we use the inductive statement and \eqref{eq:LKB}. Combining all the above, it holds that:
\begin{align*}
(\sigma_1 \ldots \sigma_{m-1}) (\sigma_1 \ldots \sigma_{m-2}) \ldots \sigma_1 b
&= b + t \sum_{2 \leq j \leq m} w_{1,j} + t \sum_{1 \leq i, j \leq m-1} w_{i+1,j+1}
\\
&= b + t \sum_{2 \leq j \leq m} w_{1,j} + t \sum_{2 \leq i, j \leq m} w_{i,j},
\end{align*}
which proves \eqref{eq:sigma_m_b}. By substituting $m=n$ in \eqref{eq:sigma_m_b} we get \eqref{eq:Delta_sq1} for $k=1$.

We now use induction on $k$ to prove \eqref{eq:Delta_sq1} for any $k \in \Np$. Suppose that the statement holds for any number less than $k$. Then:
\begin{align*}
\Delta_n^k b
= \Delta_n \Delta_n^{k-1} b
= \Delta_n b + (k-1) t \, \Delta_n \sum_{1 \leq i < j \leq n} w_{i,j}.
\end{align*}
As mentioned before, the matrices corresponding to the generators of $B_n$ for the representation $\overline{\WH}_{n,2}$ are permutation matrices. Therefore, any braid in $B_n$ acts trivially on $\sum_{1 \leq i, j \leq m} w_{i,j}$. Therefore:
\[
\Delta_n^k b = \Delta_n b + (k-1) t \sum_{1 \leq i < j \leq n} w_{i,j}
= b + k t \sum_{1 \leq i < j \leq n} w_{i,j},
\]
and the statement is proved for every $k \in \Np$. Now, it is easy to see that
\[
\Delta_n^{-1} = b - kt \sum_{1 \leq i, j \leq n} w_{i,j}.
\]
By a similar inductive argument, we can prove the statement for every $k < 0$, which concludes the proof.

\subsection{Proof of Prop.~\ref{prop:Nn21}}\label{app:Nn21}
It remains to show that the braid group relations are satisfied for the vectors $b'_m$, with $1 \leq m \leq n-1$. We start with the commutation relations $\sigma_i \sigma_j = \sigma_j \sigma_i$ for $\vert i - j \vert > 1$. Suppose first that $i,j \neq n-1$ and that $m \notin \{ i, i+1, j, j+1 \}$. Then we have that $\sigma_i \sigma_j b'_m = b'_m = \sigma_j \sigma_m b'_m$. If $m=i$ (or without loss of generality $m=j$) then:
\[
\sigma_i \sigma_j b'_i = t \, w_{i,i+1} + (1-s^{-2}) \, b'_i + s^{-1} \, b'_{i+1} = \sigma_j \sigma_i b'_i.
\]
Similarly if $m=i+1$ (or $m=j+1$):
\[
\sigma_i \sigma_j b'_{i+1} = s^{-1} \, b'_i = \sigma_j \sigma_i b'_{i+1}.
\]
Now, suppose $i$ or $j$ is equal to $n-1$ (we choose without loss of generality $i=n-1$ and hence $j < n-2$) and $1 \leq m < n-1$. If $m \neq j, j+1$, we have that:
\[
\sigma_{n-1} \sigma_j b'_m = b'_m - s^{n-k-1} \, b'_{n-1} = \sigma_j \sigma_{n-1} b'_m.
\]
For $m=j$:
\[
\sigma_{n-1} \sigma_j b'_j =  s^{j-n} t \, w_{j,j+1} + (1-s^{-2}) \, b'_j  + s^{-1} \, b'_{j+1} - s^{n-j-1} \, b'_{n-1} = \sigma_j \sigma_{n-1} b'_j.
\]
For $m=j+1$:
\[
\sigma_{n-1} \sigma_j b'_{j+1} = s^{-1} \, b '_j - s^{-1} s^{n-j-1} \, b'_{n-1} = \sigma_j \sigma_{n-1} b'_{j+1}.
\]
Finally, if $m=n-1$:
\[
\sigma_{n-1} \sigma_j b'_{n-1}  = s^{-1} t \, w_{n-1,n} - s^{-2} \, b'_{n-1} = \sigma_j \sigma_{n-1} b'_{n-1}.
\]

Now we proceed to the braiding relations $\sigma_i \sigma_{i+1} \sigma_i = \sigma_{i+1} \sigma_i \sigma_{i+1}$, for $1 \leq i < n-1$. We start with the case $1 \leq i < n-2$. If $m \neq i, i+1, i+2$ then the braiding relations are satisfied since $\sigma_i b'_m = b'_m = \sigma_{i+1} b'_k$. For $k=i$ we have that:
\begin{align*}
\sigma_i \sigma_{i+1} \sigma_i b'_i = s^{i-n} t & \overbrace{\left( \sigma_i \sigma_{i+1} w_{i,i+1} + \sigma_i w_{i+1, i+2} + (1 - s^{-2}) \, w_{i,i+1} \right)}^{A_1 :=}
\\
&+ (1- s^{-2}) \, b'_i + s^{-1} (1- s^{-2}) \, b'_{i+1} + s^{-2} \, b'_{i+2}.
\end{align*}
On the other hand:
\begin{align*}
\sigma_{i+1} \sigma_i \sigma_{i+1} b'_i = s^{i-n} t & \overbrace{\left( \sigma_{i+1} w_{i,i+1} + w_{i+1, i+2} \right)}^{A_2 :=}
\\
&+ (1- s^{-2}) \, b'_i + s^{-1} (1- s^{-2}) \, b'_{i+1} + s^{-2} \, b'_{i+2}.
\end{align*}
Note that in both expressions we have the same terms involving the vectors $b'_k$, for $k=1, \ldots, n$. It remains to compare the expressions $A_1$ and $A_2$. In Appendix~\ref{app:Nn20} it was proved that:
\begin{equation} \label{eq:braid_rel_wij}
\sigma_i \sigma_{i+1} w_{i,i+1} + \sigma_i w_{i+1,i+2} + w_{i,i+1} = \sigma_{i+1} \sigma_i w_{i+1,i+2} + \sigma_{i+1} w_{i,i+1} + w_{i+1,i+2}.
\end{equation}
Therefore:
\[
A_1 - A_2 = \sigma_{i+1} \sigma_i w_{i+1, i+2} - s^{-2} \, w_{i,i+1} = 0,
\]
where the last equality is due to \eqref{eq:LKB}. Hence, $\sigma_i \sigma_{i+1} \sigma_i b'_i = \sigma_{i+1} \sigma_i \sigma_{i+1} b'_i$.

\noindent
For $m=i+1$, it holds that:
\[
\sigma_i \sigma_{i+1} \sigma_i b'_{i+1} = s^{i-n-1} t \, w_{i+1,i+2} + s^{-1} (1-s^{-2}) \, b'i + s^{-2} \, b'_{i+1} =  \sigma_{i+1} \sigma_i \sigma_{i+1} b'_{i+1}
\]
If $m=i+2$, then:
\[
\sigma_i \sigma_{i+1} \sigma_i b'_{i+2} = s^{-2} \, b'_i = \sigma_{i+1} \sigma_i \sigma_{i+1} b'_{i+2}.
\]
Now we examine the case $i=n-2$. If $m<n-2$, then we have that:
\begin{align*}
\sigma_{n-2} \sigma_{n-1} \sigma_{n-2} b'_{n-2} = s^{-2} t & \overbrace{\left( \sigma_{n-2} \sigma_{n-1} w_{n-2,n-1} + \sigma_{n-2} w_{n-1, n} + (1 - s^{-2}) \, w_{n-2,n-1} \right)}^{B_1 :=}
\\
&- s^{-2} \, b'_{n-2} + s^{-1} (1- s^{-2}) \, b'_{n-1}.
\end{align*}
On the other hand:
\[
\sigma_{n-1} \sigma_{n-2} \sigma_{n-1} b'_{n-2} = s^{-2} t \overbrace{\left( \sigma_{n-1} w_{n-2,n-1} + w_{n-1, n} \right)}^{B_2 :=}
- s^{-2} \, b'_{n-2} + s^{-1} (1- s^{-2}) \, b'_{n-1}.
\]
Again, due to \eqref{eq:braid_rel_wij} we have that $B_1 = B_2$ (the proof is analogous to the proof of $A_1 = A_2$). Hence, $\sigma_{n-2} \sigma_{n-1} \sigma_{n-2} b'_{n-2} = \sigma_{n-1} \sigma_{n-2} \sigma_{n-1} b'_{n-2}$.

\noindent
And, finally, if $m=n-1$:
\[
\sigma_{n-2}  \sigma_{n-1} \sigma_{n-2} b'_{n-1}
= s^{-3} t \, w_{n-2,n-1} - s^{-3} \, b'_{n-2} + s^{-2} \, b'_{n-1}
= \sigma_{n-1} \sigma_{n-2} \sigma_{n-1} b'_{n-1},
\]
which completes the proof.

\subsection{Proof of Equation~\ref{eq:phi_Burau}} \label{app:phi_Burau}
Using the fact that $\beta_i=1$ for $n \equiv -2 \mod r$ (see Section~\ref{sec:Nn2}) and \eqref{eq:Fci2}, we have that:
\[
F\, c_i = s^{-(i-1)} q^2 \sum_{j=1}^{n-i} s^{-j} \, w_{i,i+j} + \sum_{j=1}^{i-1} s^{-(j-1)} \, w_{j,i} + s^{-(i-1)} \sum_{j=1}^{n} b_{j}.
\]
So:
\begin{align*}
F \, \overline{c}_i = & F \, c_i - s^{n-i} F \, c_n =
 s^{-(i-1)} q^2 \sum_{j=1}^{n-i} s^{-j} \, w_{i,i+j} + \sum_{j=1}^{i-1} s^{-(j-1)} \, w_{j,i} + s^{-(i-1)} \sum_{j=1}^{n} b_{j}
\\
&\quad - s^{n-i} \sum_{j=1}^{n-1} s^{-(j-1)} \, w_{j,n} - s^{i-1} \sum_{j=1}^n b_j
\\
&= s^{-(i-1)} q^2 \sum_{j=1}^{n-i} s^{-j} \, w_{i,i+j} + \sum_{j=1}^{i-1} s^{-(j-1)} \, w_{j,i} - s^{n-i} \sum_{j=1}^{n-1} s^{-(j-1)} \, w_{j,n}.
\end{align*}

\end{document}